\newcommand*\pgfdeclareanchoralias[3]{%
  \expandafter\def\csname pgf@anchor@#1@#3\expandafter\endcsname
     \expandafter{\csname pgf@anchor@#1@#2\endcsname}}
\tikzset{every picture/.style={line width=1.5pt}}
\newtheorem{theorem}{Theorem}[section]
\newtheorem{proposition}[theorem]{Proposition}   
\newtheorem{conjecture}[theorem]{Conjecture}
\newtheorem{corollary}[theorem]{Corollary}  
\theoremstyle{definition}
\newtheorem{definition}[theorem]{Definition}   
\newtheorem{example}[theorem]{Example}
\newtheorem{remark}[theorem]{Remark}
\newcommand{\define}[1]{{\bf \boldmath #1}}
\tikzset{
circnode/.style={
  circle, draw=red, very thin, outer sep=0.025em, minimum size=2em,
  fill=red, text centered},
integral/.style={
  regular polygon, regular polygon sides=3, shape border rotate=180, draw=black, very thick,
  outer sep=0.025em, inner sep=0, minimum size=2em, fill=blue!5, text centered},
multiply/.style={
  regular polygon, regular polygon sides=3, shape border rotate=180, draw=black, very thick,
  outer sep=0.025em, inner sep=0, minimum size=2em, fill=blue!5, text centered},
upmultiply/.style={
  regular polygon, regular polygon sides=3, draw=black, very thick,
  outer sep=0.025em, inner sep=0, minimum size=2em, fill=blue!5, text centered},
zero/.style={
  circle, draw=black, very thick, minimum size=0.15cm, fill=black,
  inner sep=0, outer sep=0},
hole/.style={
  circle, draw=white, very thick, minimum size=0.25cm, fill=white,
  inner sep=0, outer sep=0},
bang/.style={
  circle, draw=black, very thick, minimum size=0.15cm, fill=green!10,
  inner sep=0, outer sep=0},
delta/.style={
  regular polygon, regular polygon sides=3, minimum size=0.4cm, inner
  sep=0, outer sep=0.025em, draw=black, very thick, fill=green!10},
codelta/.style={
  regular polygon, regular polygon sides=3, shape border rotate=180, minimum size=0.4cm,
  inner sep=0, outer sep=0.025em, draw=black, very thick, fill=green!10},
plus/.style={
  regular polygon, regular polygon sides=3, shape border rotate=180, minimum size=0.4cm,
  inner sep = 0, outer sep=0.025em, draw=black, very thick, fill=black},
coplus/.style={
  regular polygon, regular polygon sides=3, minimum size=0.4cm,
  inner sep = 0, outer sep=0.025em, draw=black, very thick, fill=black},
sqnode/.style={
  regular polygon, regular polygon sides=4, minimum size=2.6em,
  draw=black, very thick, inner sep=0.2em, outer sep=0.025em,
  fill=yellow!10, text centered},
blackbox/.style={
  regular polygon, regular polygon sides=4, minimum size=2.6em,
  draw=black, very thick, inner sep=0.2em, outer sep=0.025em, fill=black},
bigcirc/.style={
  circle, draw=black, very thick, text width=1.6em, outer sep=0.025em,
  minimum height=1.6em, fill=blue!5, text centered}
 }
\newcommand{\op}{\mathrm{op}}
\newcommand{\tensor}{\otimes}
\newcommand{\id}{\mathrm{id}}
\newcommand{\maps}{\colon}
\newcommand{\Ob}{\mathrm{Ob}}
\newcommand{\colim}{\mathrm{colim}}
\newcommand{\asrelto}{\nrightarrow}
\newcommand{\Rel}{\mathrm{Rel}}
\newcommand{\BGraph}{\mathrm{BGraph}}
\newcommand{\BondGraph}{\mathrm{BondGraph}}
\newcommand{\BG}{\mathrm{BG}}
\newcommand\Span{\mathrm{Span}}
\newcommand{\Corel}{\mathrm{Corel}}
\newcommand{\Cospan}{\mathrm{Cospan}}
\newcommand{\Fin}{\mathrm{Fin}}
\newcommand{\Set}{\mathrm{Set}}
\newcommand{\Cob}{\mathrm{Cob}}
\newcommand{\SigFlow}{\mathrm{SigFlow}}
\newcommand{\SigCirc}{\mathrm{SigCirc}}
\newcommand{\Lag}{\mathrm{Lag}}
\newcommand{\Aff}{\mathrm{Aff}}
\newcommand{\PROP}{\mathrm{PROP}}
\newcommand{\Symm}{\mathrm{Symm}}
\newcommand{\Mon}{\mathrm{Mon}}
\newcommand{\Cat}{\mathrm{Cat}}
\newcommand{\Vect}{\mathrm{Vect}}
\newcommand{\Circ}{{\mathrm{Circ}}}
\newcommand{\LCirc}{{\mathit{L}\mathrm{Circ}}}
\newcommand{\RLCCirc}{{\mathit{RLC}\mathrm{Circ}}}
\newcommand{\Prod}{\mathrm{Prod}}
\newcommand{\Th}{\mathrm{Th}}
\newcommand{\Mod}{\mathrm{Mod}}
\newcommand{\CMC}{\mathrm{CMC}}
\newcommand{\Nat}{\mathrm{Nat}}
\newcommand{\A}{A}
\newcommand{\C}{C}
\newcommand{\D}{D}
\newcommand{\T}{T}
\newcommand{\NN}{N}
\newcommand{\N}{\mathbb{N}}
\newcommand{\R}{\mathbb{R}}
\definecolor{myurlcolor}{rgb}{0.6,0,0}
\definecolor{mycitecolor}{rgb}{0,0,0.8}
\definecolor{myrefcolor}{rgb}{0,0,0.8}
\renewcommand*{\backref}[1]{(Referred to on page #1.)}
\tikzset{
    overdraw/.style={preaction={draw,white,line width=#1}},
    overdraw/.default=5pt
}
\tikzset{font=\footnotesize}
\tikzset{->-/.style={decoration={
  markings,
  mark=at position #1 with {\arrow{>}}},postaction={decorate}}}
\tikzstyle{none}=[inner sep=0pt]
\tikzstyle{circ}=[circle,fill=black,draw,inner sep=3pt]
\newcommand{\corelation}[1]
{
\begin{aligned}

\end{aligned}
}
\begin{document}

\begin{center}
{\bf A Compositional Framework for Bond Graphs\\}
\vspace{0.3cm}
{\em Brandon Coya\\}
\vspace{0.3cm}
{\small Department of Mathematics\\
University of California\\
Riverside CA, USA 92521 \\ }
\vspace{0.3cm}
{\small email: bcoya001@ucr.edu\\}
\vspace{0.3cm}
{\small \today}
\vspace{0.3cm}
\end{center}

\begin{abstract}
Electrical circuits made only of perfectly conductive wires can be seen as partitions between finite sets: that is, isomorphism classes of jointly epic cospans. These are also known as ``corelations" and are the morphisms in the category $\Fin\Corel$. The two-element set has two different Frobenius monoid structures in $\Fin\Corel$. These two Frobenius monoids are related to ``series" and ``parallel" junctions, which are used to connect pairs of wires and electrical components together.   We show that these Frobenius monoids interact to form a ``weak bimonoid" as defined by Pastro and Street. We conjecture a presentation for the subcategory of $\Fin\Corel$ generated by the morphisms associated to these two Frobenius monoids, which we call $\Fin\Corel^{\circ}$. We are interested in ``bond graphs," which are studied in electrical engineering and are built from series and parallel junctions. Although the morphisms of $\Fin\Corel^{\circ}$ resemble bond graphs, there is not a perfect correspondence. This motivates the search for a category whose morphisms more precisely model bond graphs. We approach this by considering a subcategory, $\Lag\Rel_k^{\circ}$, of the category of Lagrangian relations, $\Lag\Rel_k$. This is because both bond graphs and circuits determine Lagrangian relations between symplectic vector spaces. The categories $\Fin\Corel^{\circ}$ and $\Lag\Rel_k^{\circ}$ have a correspondence between their sets of generating morphisms.  Thus we define the category $\BondGraph$ by using generators and imposing equations that are found in both $\Fin\Corel^{\circ}$ and $\Lag\Rel_k^{\circ}$. We study the functorial semantics of $\BondGraph$ by giving two different functors from it to the category $\Lag\Rel_k$ and a natural transformation between them. Given a bond graph, the first functor picks out a Lagrangian relation in terms of ``effort" and ``flow," while the second picks one out in terms of ``potential" and ``current."  The natural transformation arises from the way that effort and flow relate to potential and current.
\end{abstract}


\section{Introduction}
 In the 1980s Joyal and Street \cite{JS1} showed that string diagrams can be used to reason about morphisms in any symmetric monoidal category. Prior to this, scientists had been using diagrams to visualize and better understand problems for some time. Feynman introduced his diagrams in 1949 and particle physicists have been using them ever since \cite{Fe,Ka}.  Before category theory was even introduced, electrical engineers were using circuit diagrams to study electrical circuit networks. In the 1940s Olson \cite{Ol} pointed out that analogies between electrical, mechanical, thermodynamic, hydraulic, and chemical networks allow circuit diagrams to be applied to a wide variety of fields. Further examples of diagrams include signal flow diagrams used in control theory \cite{F}, and Petri nets which originated in chemistry \cite{P} but are now widely used in computer science.

Networks, such as the aforementioned examples, are often studied from a ``compositional" perspective. To study networks in this way, we allow them to have inputs and outputs. We then build larger networks out of smaller ones by attaching inputs from one network to outputs of another network, so long as they are compatible.  For some types of networks this process results in something complicated and hard to understand, even if  the two smaller parts are completely understood. For example, the behaviors of two independent pendulums are not chaotic, but when attached to make a double pendulum the resulting behavior is chaotic. Given a network, we want to know how much of the behavior of the network is determined by the behavior of the parts.

This is where the connection between category theory and networks  comes into play. With some careful work, a diagram of a network can be viewed as a morphism in a category. The process of constructing larger diagrams from smaller pieces is then viewed as composition of morphisms. From here, behavior that is preserved when constructing larger networks corresponds to assigning behavior to the morphisms using a functor. This is formalized by functorial semantics, which was first introduced by Lawvere \cite{Law} in his thesis. For each type of network, the diagrams act as a syntax, and the behaviors act as a semantics.

Baez and Fong \cite{BF} showed how to describe circuits made of wires, resistors, capacitors, and inductors, as morphisms in a category by introducing the notion of ``decorated cospans." Baez and the author \cite{BC} have used also props (traditionally written as PROPs) to also study circuits from another perspective.  Erbele \cite{BE} and \cite{BSZ} separately considered signal flow diagrams as morphisms in some category. Baez, Fong, and Pollard \cite{BFP} looked at Markov processes, while Baez and Pollard \cite{BP} studied  reaction networks using decorated cospans. In all of these cases, the functorial semantics of the categories are studied as well. The goal of this paper is to construct and study the functorial semantics of the category $\BondGraph$ whose morphisms correspond to ``bond graphs."

On April 24, 1959, the engineer Paynter \cite{HP, HP2} woke up and invented the diagrammatic language of bond graphs to study generalized versions of voltage and current, which are implicit in the analogies found by Olson \cite{Ol}. Engineers call these generalized notions for voltage and current ``effort" and ``flow," respectively.  There are also generalizations of resistors, capacitors, and inductors. A well known example of effort and flow is in mechanical systems, where effort is force and flow is velocity.  In hydraulic systems effort is pressure and flow is velocity. A lesser known example is that in chemistry, chemical potential acts as effort and molar flow acts as flow. The idea is that in all of these cases the effort and flow behave in mathematically the same way.

The analogies between electrical, chemical, mechanical, and hydraulic and other systems are by now well-understood, and have been ably explained by Karnopp, Margolis, and Rosenberg \cite{KMR, KR1, KR3}and Brown \cite{Brown}. Bond graphs are now widely used in engineering \cite{JT, JD}. Their power lies in providing a common language for these various branches of engineering, which engineers call ``energy domains." Therefore it seems fitting to introduce bond graphs into category theory.

To more easily understand bond graphs we focus primarily on their use in electrical engineering. To begin, we associate a pair of real numbers called potential, $\phi$, and current, $I$, to any wire:
\vspace{-2ex}
\begin{figure}[H]
\centering
\begin{tikzpicture}
[circuit ee IEC, set resistor graphic=var resistor IEC
      graphic,scale=.5]
\scalebox{1}{
		\node [style=none] (0) at (-1, -0) {};
		\node [style=none] (1) at (1, -0) {};
		\node [style=none] (2) at (0, 0.5) {};
		\node [style=none] (3) at (0, -0.5) {};
		\draw[line width = 1.5pt] node[above] {$\phi,I$}  (1.center) to (0.center);
}
\end{tikzpicture} 
\end{figure}

\vspace{-2ex}

\noindent The end of a wire is called a ``terminal"  so we say that wires go between terminals. Later we shall associate to potential and current the terminals instead, but this is just a choice of preference since there is no real difference. Engineers often find it useful to work with wires in pairs.  For example, in electrical sockets and household appliances the wires come in pairs. Engineers want the current flowing along one wire to be the negative of the current flowing along the other wire. The difference in potential between the wires is also more important the the individual potentials. For such wires:

\begin{figure}[H]
\centering
\begin{tikzpicture}
[circuit ee IEC, set resistor graphic=var resistor IEC
      graphic,scale=.5]
\scalebox{1}{
		\node [style=none] (2) at (-1, -.25) {};
		\node [style=none] (3) at (1, -.25) {};
		\draw[line width = 1.5pt] node[below=.15] {$\phi_2,I_2$}  (2.center) to (3.center);
		\node [style=none] (4) at (-1, .25) {};
		\node [style=none] (5) at (1, .25) {};
		\draw[line width = 1.5pt] node[above=.15] {$\phi_2,I_2$}  (4.center) to (5.center);
}
\end{tikzpicture} 
\end{figure}
\vspace{-2ex}
 \noindent we call $V=\phi_2-\phi_1$ the ``voltage" and $I=I_1=-I_2$ the ``current." Thus current on a single wire and current on a pair of wires are slightly different, yet called the same thing. Engineers call a pair of wires like this a ``bond" and the pair of terminals a ``port." Thus bonds go between ports, and in a ``bond graph" we draw a bond as follows:  
\vspace{-2ex}
\begin{figure}[H]
\centering
\begin{tikzpicture}[circuit ee IEC, set resistor graphic=var resistor IEC
      graphic, scale=0.8, every node/.append style={transform shape}]
[
	node distance=1.5cm,
	mynewelement/.style={
		color=blue!50!black!75,
		thick
	},
	mybondmodpoint/.style={
	rectangle,
	minimum size=3mm,
	very thick,
	draw=red!50!black!50, 
	outer sep=2pt
	}
]		
	\node(J11) {};
	\node (R3) [right=1 of J11] {}
	edge [line width=3.5pt]    node [below]{$I$} (J11)
        edge  [line width=3.5pt]  node [above]{$V$} (J11);
\end{tikzpicture}
\end{figure}

\vspace{-2ex}

Voltage and current for bonds behave mathematically the same as effort and flow. A bond graph then consists bonds and ports connected using ``$1$-junctions,"  and ``$0$-junctions."   Each bond is assigned both an effort and a flow variable, while the junctions dictate their relationships.  Here is an example of a bond graph where we use the convention that $E$ stands for effort and $F$ stands for flow: 


\begin{figure}[H] 
	\centering
\begin{tikzpicture}
[ 
	node distance=1.5cm,
	mynewelement/.style={
		color=blue!50!black!75,
		thick
	},
	mybondmodpoint/.style={
	rectangle,
	minimum size=3mm,
	very thick,
	draw=red!50!black!50, 
	outer sep=2pt
	}
]
	\node (S) {};
	\node (J11) [right of=S]{1}
	edge [inbonde, line width=2.5pt]  node [below]{$F_{1}$} (S)
        edge [inbonde, line width=2.5pt]  node [above]{$E_{1}$} (S);
	\node (R1) [above of=J11]{}
	edge  [inbonde, line width=2.5pt]  node [right]{$F_{2}$} (J11)
        edge  [inbonde, line width=2.5pt] node [left]{$E_{2}$} (J11);
	\node (J01) [right of=J11] {0}
	edge  [inbonde, line width=2.5pt] node  [color=black] [above]{$E_{3}$} (J11)
        edge  [inbonde, line width=2.5pt]  node  [color=black] [below]{$F_{3}$} (J11);
	\node (R2) [right of=J01] {}
	edge  [inbonde, line width=2.5pt] node [below]{$F_{4}$} (J01)
        edge  [inbonde, line width=2.5pt] node [above]{$E_{4}$}  (J01);
	\node (C1) [above of=J01] {}
	edge  [inbonde, line width=2.5pt]  node [right]{$F_{5}$} (J01)
	edge  [inbonde, line width=2.5pt]  node [left]{$E_{5}$} (J01);
	\node (C3) [below of=J11] {}
	edge  [inbonde, line width=2.5pt]  node [right]{$F_{6}$} (J11)
	edge  [inbonde, line width=2.5pt]  node [left]{$E_{6}$} (J11);
\end{tikzpicture}
\end{figure}

The arrow indicates which direction of current flow counts as positive, while the bar is called the `causal stroke'. These are unnecessary for our work, so we adopt a simplified notation without the arrow or bar. Additionally, one may also attach general circuit components, but we will also not consider these. The $1$-junction puts the associated bonds into a ``series" connection.  For $n\geq 3$ bonds this imposes the equations $$\sum_{i=1}^{n} E_i = 0$$ and $$F_1 = F_2=\cdots=F_n.$$ When there are $3$ bonds this junction is drawn as follows:

\begin{figure}[H] 
	\centering
\begin{tikzpicture}[circuit ee IEC, set resistor graphic=var resistor IEC
      graphic, scale=0.8, every node/.append style={transform shape}]
[
	node distance=1.5cm,
	mynewelement/.style={
		color=blue!50!black!75,
		thick
	},
	mybondmodpoint/.style={
	rectangle,
	minimum size=3mm,
	very thick,
	draw=red!50!black!50, 
	outer sep=2pt
	}
]		
	\node(J11) {$\mathrm{1}$};
	\node (R2) [ below left of=J11] {}
	edge  [line width=3.5pt]   node [right = .1, below]{$F_2$} (J11)
        edge  [line width=3.5pt]   node [above=.15, left]{$E_2$} (J11);
	\node (R1) [ above left of=J11] {}
	edge [line width=3.5pt]    node [below = .15, left ]{$F_1$} (J11)
        edge  [line width=3.5pt]   node [right=.1, above]{$E_1$} (J11);
	\node (C1) [right of=J11] {}
	edge [line width=3.5pt]    node [below]{$F_3$} (J11)
        edge [line width=3.5pt]    node [above]{$E_3$} (J11);
      \end{tikzpicture} 
\end{figure}

\vspace{-1ex}
\noindent Since there are $3$ bonds and thus $3$ ports this is sometimes called a $3$-port while a junction with $n$ bonds is sometimes called an $n$-port. We impose a notion of input and output for bond graphs where we think of effort and flow as going from left to right. Thus $(E_1,F_1)$ and $(E_2,F_2)$ are inputs, while $(E_3,F_3)$ is the output. Then the equations become $E_1+E_2=E_3$ and $F_1=F_2=F_3$. We then say that the $1$-junction adds efforts and ``coduplicates" flows. We may also turn this picture around to get another $1$-junction with one input and two outputs:

\begin{figure}[H] 
	\centering
\begin{tikzpicture}[circuit ee IEC, set resistor graphic=var resistor IEC
      graphic, scale=0.8, every node/.append style={transform shape}]
[
	node distance=1.5cm,
	mynewelement/.style={
		color=blue!50!black!75,
		thick
	},
	mybondmodpoint/.style={
	rectangle,
	minimum size=3mm,
	very thick,
	draw=red!50!black!50, 
	outer sep=2pt
	}
]		
	\node(J11) {$\mathrm{1}$};
	\node (R2) [ below right of=J11] {}
	edge  [line width=3.5pt]   node [ left = .1, below]{$F_3$} (J11)
        edge  [line width=3.5pt]   node [above=.15, right]{$E_3$} (J11);
	\node (R1) [ above right of=J11] {}
	edge [line width=3.5pt]    node [below=.15, right]{$F_2$} (J11)
        edge  [line width=3.5pt]   node [left=.1, above ]{$E_2$} (J11);
	\node (C1) [left of=J11] {}
	edge [line width=3.5pt]    node [below]{$F_1$} (J11)
        edge [line width=3.5pt]    node [above]{$E_1$} (J11);
      \end{tikzpicture} 
\end{figure}

\vspace{-1ex}

We say that this junction ``coadds" effort and duplicates flow since now $E_1=E_2+E_3$ and $F_1=F_2=F_3$. The other type of junction is the $0$-junction, which imposes the equations $$\sum_{i=1}^{n} F_i = 0$$ and $$E_1 = E_2=\cdots=E_n.$$ When there are $3$ bonds this junction is drawn as follows:
\begin{figure}[H] 
	\centering
\begin{tikzpicture}[circuit ee IEC, set resistor graphic=var resistor IEC
      graphic, scale=0.8, every node/.append style={transform shape}]
[
	node distance=1.5cm,
	mynewelement/.style={
		color=blue!50!black!75,
		thick
	},
	mybondmodpoint/.style={
	rectangle,
	minimum size=3mm,
	very thick,
	draw=red!50!black!50, 
	outer sep=2pt
	}
]		
	\node(J11) {$\mathrm{0}$};
	\node (R2) [ below left of=J11] {}
	edge  [line width=3.5pt]   node [ right = .1, below]{$F_2$} (J11)
        edge  [line width=3.5pt]   node [above=.15, left]{$E_2$} (J11);
	\node (R1) [ above left of=J11] {}
	edge [line width=3.5pt]    node [below = .15, left ]{$F_1$} (J11)
        edge  [line width=3.5pt]   node [right = .1, above]{$E_1$} (J11);
	\node (C1) [right of=J11] {}
	edge [line width=3.5pt]    node [below]{$F_3$} (J11)
        edge [line width=3.5pt]    node [above]{$E_3$} (J11);
      \end{tikzpicture} 
\end{figure}

\vspace{-1ex}

Now $F_1+F_2=F_3$ and $E_1=E_2=E_3$ so we say that this junction adds flow and coduplicates effort. Similarly, the reflected $0$-junction coadds flow and duplicates effort. This is drawn as:
\begin{figure}[H] 
	\centering
\begin{tikzpicture}[circuit ee IEC, set resistor graphic=var resistor IEC
      graphic, scale=0.8, every node/.append style={transform shape}]
[
	node distance=1.5cm,
	mynewelement/.style={
		color=blue!50!black!75,
		thick
	},
	mybondmodpoint/.style={
	rectangle,
	minimum size=3mm,
	very thick,
	draw=red!50!black!50, 
	outer sep=2pt
	}
]		
	\node(J11) {$\mathrm{0}$};
	\node (R2) [ below right of=J11] {}
	edge  [line width=3.5pt]   node [left = .1, below]{$F_3$} (J11)
        edge  [line width=3.5pt]   node [above=.15, right]{$E_3$} (J11);
	\node (R1) [ above right of=J11] {}
	edge [line width=3.5pt]    node [below=.15, right]{$F_2$} (J11)
        edge  [line width=3.5pt]   node [ left=.1, above]{$E_2$} (J11);
	\node (C1) [left of=J11] {}
	edge [line width=3.5pt]    node [below]{$F_1$} (J11)
        edge [line width=3.5pt]    node [above]{$E_1$} (J11);
      \end{tikzpicture} 
\end{figure}

\vspace{-1ex}

Previous work on electrical circuits \cite{BC,BF} allows us to view a diagram of wires between terminals as a morphism between finite sets in some category. A single wire between two terminals acts as the identity for the one element set. By representing a bond as a pair of wires between four terminals we can think of a bond as the identity for the two element set. We drop the labels and draw this as follows:
\[
  \xymatrixrowsep{75pt}
  \xymatrixcolsep{8pt}
  \xymatrix@1{
 *+[u]{
\begin{tikzpicture}[circuit ee IEC, set resistor graphic=var resistor IEC
      graphic,scale=0.8, every node/.append style={transform shape}]
[
	node distance=1.5cm,
	mynewelement/.style={
		color=blue!50!black!75,
		thick
	},
	mybondmodpoint/.style={
	rectangle,
	minimum size=3mm,
	very thick,
	draw=red!50!black!50, 
	outer sep=2pt
	}
]		
	\node(J11) {};
	\node (R3) [right=1 of J11] {}
	edge [line width=3.5pt]    node [below]{} (J11)
        edge  [line width=3.5pt]  node [above]{} (J11);
\end{tikzpicture}
}
&  := &
\identitytwo{.1\textwidth}
}
\]

\noindent Then a bond graph is a diagram of wires with pairs of wires going between pairs of terminals, i.e.\ ports, and we can think of a bond graph as a morphism between copies of the two element set. To do so for any bond graph we need to rewrite the junctions in terms of pairs of wires. The $1$-junction connects three pairs of wires in the following way:
\[
  \xymatrixrowsep{5pt}
  \xymatrixcolsep{8pt}
  \xymatrix@1{
 *+[u]{
\begin{tikzpicture}[circuit ee IEC, set resistor graphic=var resistor IEC
      graphic, scale=0.8, every node/.append style={transform shape}]
[
	node distance=1.5cm,
	mynewelement/.style={
		color=blue!50!black!75,
		thick
	},
	mybondmodpoint/.style={
	rectangle,
	minimum size=3mm,
	very thick,
	draw=red!50!black!50, 
	outer sep=2pt
	}
]		
	\node(J11) {$\mathrm{1}$};
	\node (R2) [ below left of=J11] {}
	edge  [line width=3.5pt]   node [below]{} (J11)
        edge  [line width=3.5pt]   node [above]{} (J11);
	\node (R1) [ above left of=J11] {}
	edge [line width=3.5pt]    node [below]{} (J11)
        edge  [line width=3.5pt]   node [above]{} (J11);
	\node (C1) [right of=J11] {}
	edge [line width=3.5pt]    node [right]{} (J11)
        edge [line width=3.5pt]    node [left]{} (J11);
      \end{tikzpicture} 
}
 & := &  \monadmult{.1\textwidth}
  }
\]

\noindent If we flip the wire diagram we get the other $1$-junction. The $0$-junction is drawn in terms of pairs of wires as:
\[
  \xymatrixrowsep{5pt}
  \xymatrixcolsep{8pt}
  \xymatrix@1{
 *+[u]{
\begin{tikzpicture}[circuit ee IEC, set resistor graphic=var resistor IEC
      graphic,scale=0.8, every node/.append style={transform shape}]
[
	node distance=1.5cm,
	mynewelement/.style={
		color=blue!50!black!75,
		thick
	},
	mybondmodpoint/.style={
	rectangle,
	minimum size=3mm,
	very thick,
	draw=red!50!black!50, 
	outer sep=2pt
	}
]		
	\node(J11) {$\mathrm{0}$};
	\node (R2) [ below left of=J11] {}
	edge [line width=3.5pt]    node [below]{} (J11)
        edge   [line width=3.5pt]  node [above]{} (J11);
	\node (R1) [ above left of=J11] {}
	edge [line width=3.5pt]   node [below]{} (J11)
        edge  [line width=3.5pt]   node [above]{} (J11);
	\node (C1) [right of=J11] {}
	edge [line width=3.5pt]    node [right]{} (J11)
        edge  [line width=3.5pt]   node [left]{} (J11);
      \end{tikzpicture} 
}
 &  := & \parmult{.1\textwidth}
  }
\]

\vspace{-1ex}

\noindent We may turn the picture around as well to get another $0$-junction. These four junctions can be stuck together in various ways to make larger bond graphs. There are also rules for simplifying bond graphs that are derived from the equations above. However, some bond graphs are not permitted even though one could imagine drawing them. This is typically because the result is trivial or does not have a useful physical interpretation in electrical engineering. One simple example is the following:
\begin{figure}[H]
\centering
\begin{tikzpicture}[circuit ee IEC, set resistor graphic=var resistor IEC
      graphic, scale=0.8, every node/.append style={transform shape}]
[
	node distance=1.5cm,
	mynewelement/.style={
		color=blue!50!black!75,
		thick
	},
	mybondmodpoint/.style={
	rectangle,
	minimum size=3mm,
	very thick,
	draw=red!50!black!50, 
	outer sep=2pt
	}
]		
	\node (J11) {1};
	\node (C1) [right of=J11] {}
        edge [line width=3.5pt]   node [left]{} (J11);
	\node (J12) [ left =1.5 of J11] {1}
    edge [line width=3.5pt, in =225, out=-45, looseness=1]   node [left]{} (J11)
    edge [line width=3.5pt, in =135, out=45, looseness=1]   node [left]{} (J11);
	\node (J13) [left of=J12] {}
        edge [line width=3.5pt]   node [left]{} (J12);
      \end{tikzpicture} 
\end{figure}
\vspace{-2ex}

Viewing this as the composite of some  $2$ output morphism and some $2$ input morphism presents a mild roadblock.  Since in a category we cannot arbitrarily stop compatible morphisms from composing, such a bond graph must be given meaning. Also, there are no junctions with less than $3$ total inputs and outputs, but with some experience using bond graphs one can imagine constructing such junctions. In fact, the above description of bond graphs using wires only works in the ``usual" cases. Thus we need to do better. To overcome these obstacles we construct a category $\BondGraph$ such that: 

\begin{enumerate}
\item $\BondGraph$ has generating morphisms corresponding to $1$-junctions, $0$-junctions, and unary versions of them.
\item The morphisms in $\BondGraph$ obey relations that can be derived from the equations governing the junctions.
\item There is a functor assigning to any morphism in $\BondGraph$ a vector space of efforts and flows consistent with equations governing junctions.
\item There is a functor assigning to any morphism in $\BondGraph$ a vector space of potentials and currents consistent with the laws governing potential and current along wires. 
\end{enumerate}

\noindent Finally, there is a natural transformation between the two functors which comes from the relationship between effort, flow, potential, and current given by the equations $V=\phi_2-\phi_1$ and $I=I_1=-I_2$.

\subsubsection*{Plan of the paper}

In Section \ref{sec:corelations} we begin by describing the category, $\Fin\Corel$, which has finite sets as objects and ``corelations" as morphisms. To summarize the work of  Baez, Fong, and the author \cite{BC, BF}, there is a correspondence between morphisms in $\Fin\Corel$ and electrical circuits made of only wire. We thus take $\Fin\Corel$ to be the category whose morphisms correspond to electrical circuits made of only wire.  The one element set, which we call $1\in \Fin\Corel$, plays a fundamental role: in terms of electrical engineering it is called a ``terminal" because it is the end of a wire. Then the morphisms in $\Fin\Corel$ correspond to various ways in which terminals can be connected.  We also discuss the notion of an ``extraspecial commutative Frobenius monoid." The first example of such a monoid is the object $1$ together with some morphisms in $\Fin\Corel.$ Many other examples of this type of monoid are examined in later sections. 
 
We need to understand the object $1$ since it represents the end of a wire and we want to understand bonds through their relationship to wires. In this analogy the end of a bond, or ``port,"  is the end of a pair of wires with opposite current, i.e., a pair of terminals with opposite current. Hence, bond graphs  correspond to specific ways of connecting ports just as electrical circuits correspond to specific ways of connecting terminals. Since $1$ is a terminal we think of the object $2$ as a port. In this approach bond graphs are morphisms in a subcategory of $\Fin\Corel$ with objects $2m$ and some morphisms in $\textrm{hom}(2m,2n)$ corresponding to ways in which ports can be connected. In Sections   \ref{sec:series}, \ref{sec:parallel}, and \ref{sec:bimonoids} we prove results that help us describe this subcategory. 

In Sections \ref{sec:series} and \ref{sec:parallel} we study two Frobenius monoid structures that can be attached to the object $2$. The first one is actually a monad that comes from an adjunction in the symmetric monoidal category $\Fin\Corel$. We show that the multiplication and comultiplication associated to the Frobenius monoid in Theorem \ref{thm:series} are closely related to $1$-junctions. Due to this we consider the unit and counit associated to this Frobenius monoid as corresponding to unary versions of $1$-junctions. 

 The second Frobenius monoid we consider, $1+1$, is constructed as the coproduct of the monoid $1$ with itself. The properties exhibited by the multiplication and comultiplication associated to this Frobenius monoid shown in Theorem \ref{thm:parallel} correspond to properties exhibited by $0$-junctions. Now we use the unit and counit associated to this Frobenius monoid as unary versions of $0$-junctions. In summary, both Frobenius monoids are extraspecial, but the first is also symmetric, while the second is commutative. This slight difference between the two Frobenius monoids is the first drawback of this approach.

In Section \ref{sec:bimonoids} we describe how $1$-junctions and $0$-junctions interact in terms of these two Frobenius monoids.  Surprisingly, the morphisms which are generated by these junctions are closely related to ``weak bimonoids," introduced by Pastro and Street in their work on quantum categories \cite{PS}. In Theorem \ref{thm:weakbimonoid}, we prove that the morphisms associated with our Frobenius monoids come together as a pair of weak bimonoids with a few additional properties. The weak bimonoid axioms are precisely the ones obeyed by the $1$- and $0$ junctions and their unary versions. However, the other flaw in this approach is that the additional properties do not match properties of bond graphs. We define $\Fin\Corel^{\circ}$ to be the subcategory of $\Fin\Corel$ whose morphisms can be attained from the morphisms making up the two Frobenius monoids. 

Section \ref{sec:props} introduces the framework of props, which are strict symmetric monoidal categories whose objects are the natural numbers where the tensor product is addition. This background allows us to present examples of these types of categories in terms of generators and relations.  In Conjecture \ref{con:presentation} we conjecture that Theorems \ref{thm:series}, \ref{thm:parallel}, and \ref{thm:weakbimonoid} give enough relations to present $\Fin\Corel^{\circ}$. Another benefit of the prop framework is that we can easily describe functors out of a prop by defining them on generators and checking relations.

In Section \ref{sec:Lagrangian} we look at the category $\Lag\Rel_k$, whose morphisms are ``Lagrangian relations" between symplectic vector spaces, in another attempt at constructing a category where the morphisms are bond graphs. This is because a bond graph determines a Lagrangian subspace of possible efforts and flows at the ports. There is a pair of extraspecial Frobenius commutative monoids in $\Lag\Rel_k$. One of the monoids has morphisms that behave like $1$-junctions, while the other has morphisms  that behave like $0$-junctions. We again use this to define unary junctions corresponding to the units and counits.  However, a problem arises when the morphisms from from one monoid interact with the morphisms from the other. Instead of two weak bimonoids they define two bimonoids. The axioms of a bimonoid are too strong. Namely, the interaction of the unit and comultiplication causes a problem. The same issue occurs with the counit and the multiplication. We call the subcategory generated by these two Frobenius monoids $\Lag\Rel_k^{\circ}$.

In Section \ref{sec:functors} we define the category $\BondGraph$ as a prop using characteristics of both $\Fin\Corel^{\circ}$ and $\Lag\Rel_k^{\circ}$. We then look at the functorial semantics of $\BondGraph$. The functor $K \maps \Fin\Corel \to \Lag\Rel_k$, which was introduced by Baez and Fong \cite{BF} and further characterized by Baez and the author \cite{BC},  picks out a ``Lagrangian subspace" of possible potentials and currents at the terminals of each electrical circuit.  In Proposition \ref{prop:blackbox} we show that the functor $Ki\maps \Fin\Corel^{\circ} \to \Lag\Rel_k$ can be defined on the generators of $\Fin\Corel^{\circ}$, where $i\maps \Fin\Corel^{\circ} \to \Fin\Corel$ is the inclusion functor.  Then in Proposition \ref{prop:functors} we define a functor $G\maps \BondGraph \to \Fin\Corel^{\circ}$ which gives a semantics for bond graphs by composing with $Ki\maps \Fin\Corel^{\circ} \to \Lag\Rel_k$. Essentially, the  functor $KiG$ picks out a Lagrangian subspace for a bond graph by assigning to each port two pairs of potential and current. 

We also define another functor $F \maps \BondGraph \to \Lag\Rel_k^{\circ}$  in  Proposition \ref{prop:functors}. Once again we compose with an inclusion functor $i'\maps \Lag\Rel_k^{\circ}\to \Lag\Rel_k$, which results in another semantics for bond graphs. The functor $i'F\maps \BondGraph \to \Lag\Rel_k$ assigns a space of possible efforts and flows to each bond graph by assigning to each port an effort and flow, subject to some laws. The two semantics are connected by the way that potential and current are related to effort and flow. This relationship corresponds to a natural transformation which we prove exists in in Theorem \ref{thm:natural}. This gives us the following diagram:


\[
\begin{tikzcd}[column sep=scriptsize]
& \Lag\Rel_k^{\circ}  \arrow[dd,Rightarrow, ""{ below}, "\alpha",xshift=5ex, shorten <= .75em, shorten >= .75em]{}  \arrow[r, ""{ below}, "i'"]{}
 & \Lag\Rel \\
\BondGraph  \arrow[ur, ""{ below}, "F"]{}  \arrow[dr, ""{above}, "G" ' ]{} \\
&\Fin\Corel^{\circ}  \arrow[r, ""{below}, "i"]{} & \Fin\Corel \arrow[uu, "K"'] 
\end{tikzcd}
\]

\subsubsection*{Acknowledgments}  

Most of all I would like to thank my adviser John Baez. His thoughts and ideas have significantly contributed to not only this paper, but also my overall success. Our conversations continue to greatly improve my understanding of category theory, my writing, and my professional growth. I thank Brendan Fong, Blake Pollard, Jason Erbele, and Nick Woods for their suggestions, helpfulness, and our many conversations.

\section{Perfectly Conductive Wires and Corelations}
\label{sec:corelations}

We summarize the work that is necessary to view circuits made of perfectly conductive wire as morphisms in a category.  This background, together with the correspondence between bond graphs and certain types of circuits, provides us with a clear approach for studying bond graphs as morphisms in a category. Baez and Fong \cite{BC, BF} have gone into significant detail about the relationship between circuits, including those with passive linear components, and categories. To begin, we think of circuits as graphs equipped with a pair of functions.

\begin{definition}
A \define{graph} is a finite set $E$ of \define{edges} and a finite set $N$ of \define{nodes} equipped with a pair of functions $s,t \maps E \to N$ assigning to 
each edge its \define{source} and \define{target}.  We say that $e \in E$ is an edge \define{from} $s(e)$ \define{to} $t(e)$.  
\end{definition}

\begin{definition}
Given finite sets $X$ and $Y$, a \define{circuit from $X$ to $Y$} is a cospan of finite sets:
\[
  \xymatrixrowsep{15pt}
  \xymatrixcolsep{5pt}
  \xymatrix{
    & N \\
    X \ar[ur]^-{i} && Y \ar[ul]_-{o}
  }
\]
together with a graph $G$ having $N$ as its set of vertices:
\[ \xymatrix{E \ar@<2.5pt>[r]^{s} \ar@<-2.5pt>[r]_{t} & N.} \] 
We call the sets $i(X)$, $o(Y)$, and $\partial N = i(X) \cup o(Y)$ the \define{inputs},  \define{outputs}, and \define{terminals} of the circuit, respectively.
\end{definition}

\noindent Here is an example of such a circuit:

\begin{center}
    \begin{tikzpicture}[circuit ee IEC, set resistor graphic=var resistor IEC graphic, scale=.4]
\scalebox{1}{
      {\node[circle,draw,inner sep=1pt,fill=gray,color=purple]         (x) at
	(-3,-1.3) {};
	\node at (-3,-2.6) {$X$};}
      \node[contact]         (A) at (0,0) {};
      \node[contact]         (B) at (3,0) {};
      \node[contact]         (C) at (1.5,-2.6) {};
      \node[contact]         (D) at (3,-2.6) {};
      {\node[circle,draw,inner sep=1pt,fill=gray,color=purple]         (y1) at
	(6,-.6) {};
	  \node[circle,draw,inner sep=1pt,fill=gray,color=purple]         (y2) at
	  (6,-2) {};
	  \node at (6,-2.6) {$Y$};}
      \coordinate         (ua) at (.5,.25) {};
      \coordinate         (ub) at (2.5,.25) {};
      \coordinate         (la) at (.5,-.25) {};
      \coordinate         (lb) at (2.5,-.25) {};
      \path (A) edge (ua);
      \path (A) edge (la);
      \path (B) edge (ub);
      \path (B) edge (lb);
      \path (ua) edge  [->-=.5] node[label={[label distance=1pt]90:{}}] {} (ub);
      \path (la) edge [->-=.5]  node[label={[label distance=1pt]270:{}}] {} (lb);
      \path (A) edge [->-=.5]  node[label={[label distance=2pt]180:{}}] {} (C);
      \path (C) edge  [->-=.5]  node[label={[label distance=2pt]0:{}}] {} (B);
      \path (C) edge  [->-=.5]  node[label={[label distance=2pt]0:{}}] {} (D);
      {
	\path[color=purple, very thick, shorten >=10pt, shorten <=5pt, ->, >=stealth] (x) edge (A);
	\path[color=purple, very thick, shorten >=10pt, shorten <=5pt, ->, >=stealth] (y1) edge (B);
	\path[color=purple, very thick, shorten >=10pt, shorten <=5pt, ->, >=stealth] (y2)  edge (D);}
}
    \end{tikzpicture}
  \end{center}

Given two circuits, one from $X$ to $Y$, and the other from $Y$ to $Z$, we define their composite in the following way. An output of one circuit is ``glued" to an input of another circuit if they are the image of the same element in $Y$.  The precise details involve composing isomorphisms classes of cospans using pushouts, which were explained by Baez and the author  \cite{BC}. However, it suffices to understand an example.
These two circuits:
 \begin{center}
    \begin{tikzpicture}[circuit ee IEC, set resistor graphic=var resistor IEC
      graphic,scale=.4]
      \node[circle,draw,inner sep=1pt,fill=gray,color=purple]         (x) at
      (-3,-1.3) {};
      \node at (-3,-3.2) {\footnotesize $X$};
      \node[circle,draw,inner sep=1pt,fill]         (A) at (0,0) {};
      \node[circle,draw,inner sep=1pt,fill]         (B) at (3,0) {};
      \node[circle,draw,inner sep=1pt,fill]         (C) at (1.5,-2.6) {};
      \node[circle,draw,inner sep=1pt,fill]         (D) at (3,-2.6) {};
      \node[circle,draw,inner sep=1pt,fill=gray,color=purple]         (y1) at
      (6,-.6) {};
      \node[circle,draw,inner sep=1pt,fill=gray,color=purple]         (y2) at
      (6,-2) {};
      \node at (6,-3.2) {\footnotesize $Y$};
      \coordinate         (ua) at (.5,.25) {};
      \coordinate         (ub) at (2.5,.25) {};
      \coordinate         (la) at (.5,-.25) {};
      \coordinate         (lb) at (2.5,-.25) {};
      \path (A) edge (ua);
      \path (A) edge (la);
      \path (B) edge (ub);
      \path (B) edge (lb);
      \path (ua) edge [->-=.5]  node[label={[label distance=1pt]90:{}}] {} (ub);
      \path (la) edge [->-=.5]   node[label={[label distance=1pt]270:{}}] {} (lb);
      \path (A) edge   [->-=.5]  node[label={[label distance=-1pt]180:{}}] {} (C);
      \path (C) edge  [->-=.5]   node[label={[label distance=-1pt]0:{}}] {} (B);
      \path (C) edge  [->-=.5]   node[label={[label distance=-1pt]0:{}}] {} (D);
      \path[color=purple, very thick, shorten >=10pt, shorten <=5pt, ->, >=stealth] (x) edge (A);
      \path[color=purple, very thick, shorten >=10pt, shorten <=5pt, ->, >=stealth] (y1) edge (B);
      \path[color=purple, very thick, shorten >=10pt, shorten <=5pt, ->, >=stealth] (y2)
      edge (D);

      \node[circle,draw,inner sep=1pt,fill]         (A') at (9,0) {};
      \node[circle,draw,inner sep=1pt,fill]         (B') at (12,0) {};
      \node[circle,draw,inner sep=1pt,fill]         (C') at (10.5,-2.6) {};
      \node[circle,draw,inner sep=1pt,fill=gray,color=purple]         (z1) at
      (15,-.6) {};
      \node[circle,draw,inner sep=1pt,fill=gray,color=purple]         (z2) at (15,-2) {};
      \node at (15,-3.2) {\footnotesize $Z$};
      \path (A') edge [->-=.5]  node[above] {} (B');
      \path (C') edge [->-=.5]  node[right] {} (B');
      \path[color=purple, very thick, shorten >=10pt, shorten <=5pt, ->, >=stealth] (y1) edge (A');
      \path[color=purple, very thick, shorten >=10pt, shorten <=5pt, ->, >=stealth] (y2)
      edge (C');
      \path[color=purple, very thick, shorten >=10pt, shorten <=5pt, ->, >=stealth] (z1) edge (B');
      \path[color=purple, very thick, shorten >=10pt, shorten <=5pt, ->, >=stealth]
      (z2) edge (C');
    \end{tikzpicture}
\end{center}

\noindent
are composed by gluing to obtain this one: 

\begin{center}
    \begin{tikzpicture}[circuit ee IEC, set resistor graphic=var resistor IEC
      graphic,scale=0.4]
      \node[circle,draw,inner sep=1pt,fill=gray,color=purple]         (x) at (-4,-1.3) {};
      \node at (-4,-3.2) {\footnotesize $X$};
      \node[circle,draw,inner sep=1pt,fill]         (A) at (0,0) {};
      \node[circle,draw,inner sep=1pt,fill]         (B) at (3,0) {};
      \node[circle,draw,inner sep=1pt,fill]         (C) at (1.5,-2.6) {};
      \node[circle,draw,inner sep=1pt,fill]         (E) at (3,-2.6) {};
      \node[circle,draw,inner sep=1pt,fill]         (D) at (6,0) {};
      \coordinate         (ua) at (.5,.25) {};
      \coordinate         (ub) at (2.5,.25) {};
      \coordinate         (la) at (.5,-.25) {};
      \coordinate         (lb) at (2.5,-.25) {};
      \coordinate         (ub2) at (3.5,.25) {};
      \coordinate         (ud) at (5.5,.25) {};
      \coordinate         (lb2) at (3.5,-.25) {};
      \path (A) edge (ua);
      \path (A) edge (la);
      \path (B) edge (ub);
      \path (B) edge (lb);
      \node[circle,draw,inner sep=1pt,fill=gray,color=purple]         (z1) at
      (10,-.6) {};
      \node[circle,draw,inner sep=1pt,fill=gray,color=purple]         (z2) at (10,-2) {};
      \node at (10,-3.2) {\footnotesize $Z$};
      \path (ua) edge [->-=.5]  node[above] {} (ub);
      \path (la) edge [->-=.5] node[below] {} (lb);
      \path (A) edge [->-=.5]  node[left] {} (C);
      \path (C) edge [->-=.5]  node[right] {} (B);
      \path (C) edge [->-=.5]  node[right] {} (E);
      \path (E) edge [->-=.5]  node[right] {} (D);
      \path (B) edge [->-=.5]  node[right] {} (D); 
      \path[color=purple, very thick, shorten >=10pt, shorten <=5pt, ->, >=stealth] (x) edge (A);
      \path[color=purple, very thick, shorten >=10pt, shorten <=5pt, ->, >=stealth] (z1)   edge (D);
      \path[bend left, color=purple, very thick, shorten >=10pt, shorten <=5pt, ->, >=stealth] (z2) edge (E);
    \end{tikzpicture}
  \end{center}

Using this composition we have a category, which we call $\Circ$. This category can be equipped with a monoidal structure defined by stacking one graph above another graph to formally turn two graphs with inputs and outputs into a single one. Once again we skip the precise details. These two circuits:

 \begin{center}
    \begin{tikzpicture}[circuit ee IEC, set resistor graphic=var resistor IEC
      graphic,scale=.4]
      \node[circle,draw,inner sep=1pt,fill=gray,color=purple]         (x) at
      (-3,-1.3) {};
      \node at (-3,-3.2) {\footnotesize $X$};
      \node[circle,draw,inner sep=1pt,fill]         (A) at (0,0) {};
      \node[circle,draw,inner sep=1pt,fill]         (B) at (3,0) {};
      \node[circle,draw,inner sep=1pt,fill]         (C) at (1.5,-2.6) {};
      \node[circle,draw,inner sep=1pt,fill]         (D) at (3,-2.6) {};
      \node[circle,draw,inner sep=1pt,fill=gray,color=purple]         (y1) at
      (6,-.6) {};
      \node[circle,draw,inner sep=1pt,fill=gray,color=purple]         (y2) at
      (6,-2) {};
      \node at (6,-3.2) {\footnotesize $Y$};
      \coordinate         (ua) at (.5,.25) {};
      \coordinate         (ub) at (2.5,.25) {};
      \coordinate         (la) at (.5,-.25) {};
      \coordinate         (lb) at (2.5,-.25) {};
      \path (A) edge (ua);
      \path (A) edge (la);
      \path (B) edge (ub);
      \path (B) edge (lb);
      \path (ua) edge [->-=.5]  node[label={[label distance=1pt]90:{}}] {} (ub);
      \path (la) edge [->-=.5]   node[label={[label distance=1pt]270:{}}] {} (lb);
      \path (A) edge   [->-=.5]  node[label={[label distance=-1pt]180:{}}] {} (C);
      \path (C) edge  [->-=.5]   node[label={[label distance=-1pt]0:{}}] {} (B);
      \path (C) edge  [->-=.5]   node[label={[label distance=-1pt]0:{}}] {} (D);
      \path[color=purple, very thick, shorten >=10pt, shorten <=5pt, ->, >=stealth] (x) edge (A);
      \path[color=purple, very thick, shorten >=10pt, shorten <=5pt, ->, >=stealth] (y1) edge (B);
      \path[color=purple, very thick, shorten >=10pt, shorten <=5pt, ->, >=stealth] (y2)
      edge (D);

      \node[circle,draw,inner sep=1pt,fill]         (A') at (9,0) {};
      \node[circle,draw,inner sep=1pt,fill]         (B') at (12,0) {};
      \node[circle,draw,inner sep=1pt,fill]         (C') at (10.5,-2.6) {};
      \node[circle,draw,inner sep=1pt,fill=gray,color=purple]         (z1) at
      (15,-.6) {};
      \node[circle,draw,inner sep=1pt,fill=gray,color=purple]         (z2) at (15,-2) {};
      \node at (15,-3.2) {\footnotesize $Z$};
      \path (A') edge [->-=.5]  node[above] {} (B');
      \path (C') edge [->-=.5]  node[right] {} (B');
      \path[color=purple, very thick, shorten >=10pt, shorten <=5pt, ->, >=stealth] (y1) edge (A');
      \path[color=purple, very thick, shorten >=10pt, shorten <=5pt, ->, >=stealth] (y2)
      edge (C');
      \path[color=purple, very thick, shorten >=10pt, shorten <=5pt, ->, >=stealth] (z1) edge (B');
      \path[color=purple, very thick, shorten >=10pt, shorten <=5pt, ->, >=stealth]
      (z2) edge (C');
    \end{tikzpicture}
\end{center}
\noindent are tensored by stacking the first above the second, giving us a circuit from $X+Y$ to $Y+Z$:
 \begin{center}
    \begin{tikzpicture}[circuit ee IEC, set resistor graphic=var resistor IEC
      graphic,scale=.4]
      \node[circle,draw,inner sep=1pt,fill=gray,color=purple]         (x) at
      (-3,-1.3) {};
      \node at (-3,-3.2) {\footnotesize $$};
      \node[circle,draw,inner sep=1pt,fill]         (A) at (0,0) {};
      \node[circle,draw,inner sep=1pt,fill]         (B) at (3,0) {};
      \node[circle,draw,inner sep=1pt,fill]         (C) at (1.5,-2.6) {};
      \node[circle,draw,inner sep=1pt,fill]         (D) at (3,-2.6) {};
      \node[circle,draw,inner sep=1pt,fill=gray,color=purple]         (y1) at
      (6,-.6) {};
      \node[circle,draw,inner sep=1pt,fill=gray,color=purple]         (y2) at
      (6,-2) {};
      \node at (6,-3.2) {\footnotesize $$};
      \coordinate         (ua) at (.5,.25) {};
      \coordinate         (ub) at (2.5,.25) {};
      \coordinate         (la) at (.5,-.25) {};
      \coordinate         (lb) at (2.5,-.25) {};
      \path (A) edge (ua);
      \path (A) edge (la);
      \path (B) edge (ub);
      \path (B) edge (lb);
      \path (ua) edge [->-=.5]  node[label={[label distance=1pt]90:{}}] {} (ub);
      \path (la) edge [->-=.5]   node[label={[label distance=1pt]270:{}}] {} (lb);
      \path (A) edge   [->-=.5]  node[label={[label distance=-1pt]180:{}}] {} (C);
      \path (C) edge  [->-=.5]   node[label={[label distance=-1pt]0:{}}] {} (B);
      \path (C) edge  [->-=.5]   node[label={[label distance=-1pt]0:{}}] {} (D);
      \path[color=purple, very thick, shorten >=10pt, shorten <=5pt, ->, >=stealth] (x) edge (A);
      \path[color=purple, very thick, shorten >=10pt, shorten <=5pt, ->, >=stealth] (y1) edge (B);
      \path[color=purple, very thick, shorten >=10pt, shorten <=5pt, ->, >=stealth] (y2)
      edge (D);

      \node[circle,draw,inner sep=1pt,fill]         (A') at (0,-3.5) {};
      \node[circle,draw,inner sep=1pt,fill]         (B') at (3,-3.5) {};
      \node[circle,draw,inner sep=1pt,fill]         (C') at (1.5,-6.1) {};
      \node[circle,draw,inner sep=1pt,fill=gray,color=purple]         (z1) at
      (6,-4.1) {};
      \node[circle,draw,inner sep=1pt,fill=gray,color=purple]         (z2) at (6,-5.5) {};
      \node at (-3,-6.7) {\footnotesize $X+Y$};
      \node[circle,draw,inner sep=1pt,fill=gray,color=purple]         (y1') at
      (-3,-4.1) {};
      \node[circle,draw,inner sep=1pt,fill=gray,color=purple]         (y2') at
      (-3,-5.5) {};
      \node at (6,-6.7) {\footnotesize $Y+Z$};
      \path (A') edge [->-=.5]  node[above] {} (B');
      \path (C') edge [->-=.5]  node[right] {} (B');
      \path[color=purple, very thick, shorten >=10pt, shorten <=5pt, ->, >=stealth] (y1') edge (A');
      \path[color=purple, very thick, shorten >=10pt, shorten <=5pt, ->, >=stealth] (y2')
      edge (C');
      \path[color=purple, very thick, shorten >=10pt, shorten <=5pt, ->, >=stealth] (z1) edge (B');
      \path[color=purple, very thick, shorten >=10pt, shorten <=5pt, ->, >=stealth]
      (z2) edge (C');
    \end{tikzpicture}
\end{center}

\noindent This results in a symmetric monoidal category.

\begin{proposition}
There is a symmetric monoidal category $\define{\Circ}$ where the objects are finite sets, the morphisms are isomorphism classes of circuits,  composition is gluing inputs to outputs, and the tensor product is stacking circuits.
\end{proposition}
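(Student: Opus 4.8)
My plan is to recognize this construction as an instance of Fong's \emph{decorated cospan} framework \cite{BF}, which is designed to package exactly this sort of data — a cospan of finite sets whose apex carries auxiliary structure — into a symmetric monoidal category. The only genuinely new thing to verify is that the rule assigning graph structures to finite sets is a lax symmetric monoidal functor; the symmetric monoidal structure on $\Circ$ then follows from the general theorem, and I would conclude by matching its composition and tensor product with the ``gluing'' and ``stacking'' operations illustrated in the examples above. As a preliminary I would recall that cospans of finite sets already form a symmetric monoidal category $\Cospan(\Fin\Set)$: objects are finite sets, a morphism $X \to Y$ is an isomorphism class of cospans $X \to N \leftarrow Y$, composition is pushout over the shared foot, the tensor product is the coproduct $+$ with unit $\emptyset$, and the symmetry comes from the swap isomorphism $X + Y \cong Y + X$. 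Its associativity and coherence rest only on the universal property of pushouts and coproducts in $\Fin\Set$.

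Next I would introduce the decoration functor $F \maps (\Fin\Set, +, \emptyset) \to (\Set, \times, 1)$ sending a finite set $N$ to the set $F(N)$ of isomorphism classes of graphs $s,t \maps E \to N$ with node set $N$, where an isomorphism is a bijection of edge sets commuting with $s$ and $t$; this is genuinely a set, since such a graph is determined up to isomorphism by the number of edges assigned to each ordered pair in $N \times N$. On a function $f \maps N \to N'$ the functor $F$ acts by post-composing $s$ and $t$ with $f$ while keeping the edge set fixed, and functoriality is immediate. The laxator $\phi_{M,N} \maps F(M) \times F(N) \to F(M+N)$ sends a pair of graphs to their disjoint union — union of edge sets, with source and target maps landing in the respective summand of $M+N$ — and the unit map $1 \to F(\emptyset)$ selects the empty graph. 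I would then check that $\phi$ is associative, unital, and compatible with the symmetry; each is a short diagram chase using the universal property of $+$.

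Once $F$ is confirmed to be lax symmetric monoidal, Fong's theorem yields a symmetric monoidal category of $F$-decorated cospans: objects are finite sets; a morphism $X \to Y$ is an isomorphism class of a cospan $X \to N \leftarrow Y$ equipped with a decoration in $F(N)$; composition with $Y \to M \leftarrow Z$ forms the pushout $N +_Y M$ and combines the decorations by pushing each forward along the pushout coprojections and applying $\phi$; and the tensor product is coproduct of cospans together with $\phi$. This is precisely $\Circ$, and unwinding the composite decoration recovers the gluing of terminals in the worked example, while the tensor recovers the stacking of circuits. The step I expect to be the main obstacle is showing that composition is well defined on isomorphism classes and associative — the bookkeeping of transporting decorations coherently across iterated pushouts. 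But this is exactly the content guaranteed by the decorated cospan theorem once $F$ is known to be a lax symmetric monoidal functor, so the real labor reduces to the functoriality and coherence checks for $F$, all of which follow formally from the universal property of the coproduct in $\Fin\Set$.
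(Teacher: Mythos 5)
Your proposal is correct and follows essentially the route the paper itself relies on: the paper gives no self-contained proof but defers to the decorated cospan machinery of Baez--Fong \cite{BF} and Baez--Coya \cite{BC}, which is exactly what you reconstruct by exhibiting the graph-decoration functor $F \maps (\Fin\Set,+,\emptyset) \to (\Set,\times,1)$ as lax symmetric monoidal and invoking Fong's theorem. Your observation that one should take isomorphism classes of graphs (so that $F(N)$ is a genuine set and edge-renaming is quotiented out) is a sensible and correct way to set up the decoration.
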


We shall see that to any circuit we can associate a ``behavior" \cite{BF}. Roughly speaking, this consists of taking a circuit, and assigning to it a vector space tied to the inputs and outputs of the graph. Since current may only flow between two nodes which are connected by a path in a circuit, all that matter for any two nodes is whether or not there is a path between them.  That is, the defining property for a circuit made of perfectly conductive wire is the connectivity of the nodes. 

Thus  the behavior of any circuit in $\Circ$ is determined by the connectivity between input and output nodes of the underlying graph, but not how the nodes are connected. We extract the relevant property from a circuit in two steps. First replace $G$ with its set of connected components $\pi_0(G)$, which also defines a map $p_G\maps N\to \pi_0(G)$ that sends each node to the connected component that it lies in. This sends a circuit to a cospan of finite sets: 
\[
  \xymatrixrowsep{15pt}
  \xymatrixcolsep{5pt}
  \xymatrix{
    & \pi_0(G) \\
    X \ar[ur]^-{p_G i} && Y \ar[ul]_-{p_G o}
  }
\]
\noindent If we apply this process to an example, then the circuit:
\begin{center}
    \begin{tikzpicture}[circuit ee IEC, set resistor graphic=var resistor IEC graphic,scale=.4]
\scalebox{1}{
      {\node[circle,draw,inner sep=1pt,fill=gray,color=purple]         (x) at
	(-3,-1.3) {};
	\node at (-3,-2.6) {$X$};}
      \node[contact]         (A) at (0,0) {};
      \node[contact]         (B) at (3,0) {};
      \node[contact]         (C) at (1.5,-2.6) {};
      \node[contact]         (D) at (3,-2.6) {};
      {\node[circle,draw,inner sep=1pt,fill=gray,color=purple]         (y1) at
	(6,-.6) {};
	  \node[circle,draw,inner sep=1pt,fill=gray,color=purple]         (y2) at
	  (6,-2) {};
	  \node at (6,-2.6) {$Y$};}
      \coordinate         (ua) at (.5,.25) {};
      \coordinate         (ub) at (2.5,.25) {};
      \coordinate         (la) at (.5,-.25) {};
      \coordinate         (lb) at (2.5,-.25) {};
      \path (A) edge (ua);
      \path (A) edge (la);
      \path (B) edge (ub);
      \path (B) edge (lb);
      \path (ua) edge  [->-=.5] node[label={[label distance=1pt]90:{}}] {} (ub);
      \path (la) edge [->-=.5]  node[label={[label distance=1pt]270:{}}] {} (lb);
      \path (A) edge [->-=.5]  node[label={[label distance=2pt]180:{}}] {} (C);
      \path (C) edge  [->-=.5]  node[label={[label distance=2pt]0:{}}] {} (B);
      \path (C) edge  [->-=.5]  node[label={[label distance=2pt]0:{}}] {} (D);
      {
	\path[color=purple, very thick, shorten >=10pt, shorten <=5pt, ->, >=stealth] (x) edge (A);
	\path[color=purple, very thick, shorten >=10pt, shorten <=5pt, ->, >=stealth] (y1) edge (B);
	\path[color=purple, very thick, shorten >=10pt, shorten <=5pt, ->, >=stealth] (y2)  edge (D);}
}
    \end{tikzpicture}
  \end{center}

\noindent becomes the cospan:

\begin{center}
    \begin{tikzpicture}[circuit ee IEC, set resistor graphic=var resistor IEC graphic,scale=.4]
\scalebox{1}{
      {\node[circle,draw,inner sep=1pt,fill=gray,color=purple]         (x) at
	(-2,-1.3) {};
	\node at (-2,-2.6) {$X$};}
      \node[contact]         (A) at (2,0) {};
      {\node[circle,draw,inner sep=1pt,fill=gray,color=purple]         (y1) at
	(6,-.6) {};
	  \node[circle,draw,inner sep=1pt,fill=gray,color=purple]         (y2) at
	  (6,-2) {};
	  \node at (6,-2.6) {$Y$};}
      {
	\path[color=purple, very thick, shorten >=10pt, shorten <=5pt, ->, >=stealth] (x) edge (A);
	\path[color=purple, very thick, shorten >=10pt, shorten <=5pt, ->, >=stealth] (y1) edge (A);
	\path[color=purple, very thick, shorten >=10pt, shorten <=5pt, ->, >=stealth] (y2)  edge (A);}
}
    \end{tikzpicture}
  \end{center}
\noindent In general, from a category $C$ with pushouts, one has a category $\Cospan(C)$ with objects the objects of $C$ and morphisms the isomorphism classes of cospans \cite{Be}. 

\begin{definition}
Let $\define{\Fin\Cospan}$ be the symmetric monoidal category whose objects are finite sets and morphisms are isomorphism classes of cospans. The tensor product is defined to be disjoint union, which we write as ``+."
\end{definition}

\noindent Determining an underlying isomorphism class of cospans for a circuit defines a functor $H\maps \Circ\to \Fin\Cospan$. 

Next, imagine a machine which had some part that is completely disconnected from the inputs and outputs. One would not consider such a part to actually be a part of the machine and would prefer to simply discard it. We do the same for any connected components of a circuit which have no paths to either inputs or outputs. In fact this can be done without affecting the behavior of the circuit.  For us this means looking at a cospan of finite sets:

\[
  \xymatrixrowsep{15pt}
  \xymatrixcolsep{5pt}
  \xymatrix{
    & S \\
    X \ar[ur]^-{f} && Y \ar[ul]_-{g}
  }
\]
and replacing $S$ by $f(X)\cup g(Y)$. The result is another cospan where every element in the apex is in the image of at least one function.

\begin{definition}
A cospan $X \stackrel{f}{\rightarrow} S \stackrel{g}{\leftarrow} Y$
in the category of finite sets is \define{jointly epic} if $f(X) \cup g(Y) = S$.   
\end{definition}

A jointly epic cospan determines a partition of $X+Y$ where two points are in the same part of the partition if and only if they map to the same point of $S$. Partitions $P$ of $X+Y$ are dual to the notion of relations, $R\subseteq X\times Y$ so we make the following definition.

\begin{definition}
Given sets $X$ and $Y,$ a \define{corelation} from $X$ to $Y$ is a partition of $X+Y$, or equivalently, an isomorphism class of jointly epic cospans $X \stackrel{f}{\rightarrow} S \stackrel{g}{\leftarrow} Y$.
\end{definition}

This brings us to a subcategory of $\Fin\Cospan$, which is our main focus of attention.

\begin{definition}
Let $\define{\Fin\Corel}$ be the symmetric monoidal category with finite sets as objects and corelations as morphisms. The tensor product is again disjoint union.
\end{definition}

The process of forcing a cospan to become jointly epic defines a unique functor \cite{CF} \[      H' \maps \Fin\Cospan \to \Fin\Corel  .\]
 By taking the composite \[      H'H \maps \Circ \to \Fin\Corel  \] we associate any circuit with an underlying isomorphism class of corelations.  Since it turn out that the underlying corelation completely determines the behavior of a circuit anyway, we take $\Fin\Corel$  to be the category whose morphisms are circuits. 

With this choice in mind, we draw corelations as string diagrams where connected inputs and outputs have paths between them, and disconnected inputs and outputs do not.  For example:

 \begin{center}
    \begin{tikzpicture}[circuit ee IEC, set resistor graphic=var resistor IEC
      graphic,scale=.4]
      \node[circle,draw,inner sep=1pt,fill=gray,color=purple]         (x) at
      (-3,-1.3) {};
      \node at (-3,-3.2) {\footnotesize $$};
      \node[circle,draw,inner sep=1pt,fill]         (A) at (0,0) {};
      \node[circle,draw,inner sep=1pt,fill]         (B) at (3,0) {};
      \node[circle,draw,inner sep=1pt,fill]         (C) at (1.5,-2.6) {};
      \node[circle,draw,inner sep=1pt,fill]         (D) at (3,-2.6) {};
      \node[circle,draw,inner sep=1pt,fill=gray,color=purple]         (y1) at
      (6,-.6) {};
      \node[circle,draw,inner sep=1pt,fill=gray,color=purple]         (y2) at
      (6,-2) {};
      \node at (6,-3.2) {\footnotesize $$};
      \coordinate         (ua) at (.5,.25) {};
      \coordinate         (ub) at (2.5,.25) {};
      \coordinate         (la) at (.5,-.25) {};
      \coordinate         (lb) at (2.5,-.25) {};
      \path (A) edge (ua);
      \path (A) edge (la);
      \path (B) edge (ub);
      \path (B) edge (lb);
      \path (ua) edge [->-=.5]  node[label={[label distance=1pt]90:{}}] {} (ub);
      \path (la) edge [->-=.5]   node[label={[label distance=1pt]270:{}}] {} (lb);
      \path (A) edge   [->-=.5]  node[label={[label distance=-1pt]180:{}}] {} (C);
      \path (C) edge  [->-=.5]   node[label={[label distance=-1pt]0:{}}] {} (B);
      \path (C) edge  [->-=.5]   node[label={[label distance=-1pt]0:{}}] {} (D);
      \path[color=purple, very thick, shorten >=10pt, shorten <=5pt, ->, >=stealth] (x) edge (A);
      \path[color=purple, very thick, shorten >=10pt, shorten <=5pt, ->, >=stealth] (y1) edge (B);
      \path[color=purple, very thick, shorten >=10pt, shorten <=5pt, ->, >=stealth] (y2)
      edge (D);

      \node[circle,draw,inner sep=1pt,fill]         (A') at (0,-3.5) {};
      \node[circle,draw,inner sep=1pt,fill]         (B') at (3,-3.5) {};
      \node[circle,draw,inner sep=1pt,fill]         (C') at (1.5,-6.1) {};
      \node[circle,draw,inner sep=1pt,fill=gray,color=purple]         (z1) at
      (6,-4.1) {};
      \node[circle,draw,inner sep=1pt,fill=gray,color=purple]         (z2) at (6,-5.5) {};
      \node at (-3,-6.7) {\footnotesize $X$};
      \node[circle,draw,inner sep=1pt,fill=gray,color=purple]         (y1') at
      (-3,-4.1) {};
      \node[circle,draw,inner sep=1pt,fill=gray,color=purple]         (y2') at
      (-3,-5.5) {};
      \node at (6,-6.7) {\footnotesize $Y$};
      \path (A') edge [->-=.5]  node[above] {} (B');
      \path (C') edge [->-=.5]  node[right] {} (B');
      \path[color=purple, very thick, shorten >=10pt, shorten <=5pt, ->, >=stealth] (y1') edge (A');
      \path[color=purple, very thick, shorten >=10pt, shorten <=5pt, ->, >=stealth] (y2')
      edge (C');
      \path[color=purple, very thick, shorten >=10pt, shorten <=5pt, ->, >=stealth] (z1) edge (B');
      \path[color=purple, very thick, shorten >=10pt, shorten <=5pt, ->, >=stealth]
      (z2) edge (C');
    \end{tikzpicture}
\end{center}

\noindent has corresponding corelation:

\[
  \xymatrixrowsep{1pt}
  \xymatrixcolsep{5pt}
  \xymatrix{
   X & \corelation{.075\textwidth} & Y \\
  }
\]
\noindent Additionally, since we are dealing with isomorphism classes anyway, we could have denoted $X$ as $3$ and $Y$ as $4$, so that this is a corelation $f\maps 3\to 4$.

In circuits, corelations, and also bond graphs, the distinction between inputs and outputs is arbitrary. This property is encapsulated by dagger compact categories. The concept of dagger categories were first discussed by Abramsky and Coecke \cite {AC} and further detailed by Selinger \cite{S}. As expected, $\Fin\Corel$ is such a category.

\begin{definition}
A \define{dagger category} is a category $\C$ equipped with an involutive contravariant endofunctor $(-)^{\dagger}\maps \C \to \C$ that is identity on objects. This associates to any morphism $f\maps A\to B,$ a morphism $f^{\dagger} \maps B\to A$, such that for all $f\maps A\to B$ and $g\maps B\to C$ we have $\mathrm{id}_A = \mathrm{id}_A^{\dagger}$, $(gf)^{\dagger}= f^{\dagger}g^{\dagger}$, and $f^{\dagger\dagger} = f$. A morphism is \define{unitary} if its dagger is also its inverse.
\end{definition}
\begin{definition} A \define{symmetric monoidal dagger category} is a symmetric monoidal category $\C$ with a dagger structure $(-)^{\dagger}\maps \C \to \C$ that is symmetric monoidal and where the associator, unitors, and braiding of $\C$ are unitary. 
\end{definition}

To see why $\Fin\Corel$ is a dagger category, notice that for any corelation $f\maps m \to n$ we get a corelation $f^{\dagger} \maps n \to m $ defined to give the same partition, but with the input and output formally exchanged.

\begin{definition}
\label{def:zigzag}
In a symmetric monoidal category $\C$ an object $A^{*}$ is a \define{dual} of $A$ if it is equipped with two morphisms called the \define{unit} $\eta_A\maps I\to A \otimes A^{*}$ and the \define{counit} $\epsilon_A \maps A^{*} \otimes A \to I$ such that the following diagrams commute:

\[
\xymatrix{
I\otimes A \ar[rr]^-{\eta_A \otimes \mathrm{id}_A} \ar[d]_-{r^{-1}_A \circ \ell_A} && (A\otimes A^{*}) \otimes A \ar[d]^-{\alpha_{A,A^{*}, A}} \\
A\otimes I && A\otimes ( A^{*} \otimes A ) \ar[ll]^-{\mathrm{id}_A \otimes \epsilon_A}
}
\]

\[
\xymatrix{
A^{*}\otimes I \ar[rr]^-{ \mathrm{id}_{A^{*}} \otimes \eta_A} \ar[d]_-{\ell^{-1}_A\circ r_A} && A^{*}\otimes (A \otimes A^{*})	\ar[d]^-{\alpha^{-1}_{A^{*}, A,A^{*}}} \\
I\otimes A^{*} && (A^{*}\otimes A) \otimes A^{*} \ar[ll]^-{\epsilon_A \otimes \mathrm{id}_A}
}
\]
A symmetric monoidal category $\C$ is \define{compact closed} if every object $A$ has a dual object $A^{*}$.
\end{definition}

One can show that any two duals of an object are canonically isomorphic, so one may speak of ``the'' dual.   In $\Fin\Corel$ the objects are self-dual. This is most easily seen by looking at the object $1$ where the following morphisms act as the unit and counit respectively:

\[
  \xymatrixrowsep{1pt}
  \xymatrixcolsep{75pt}
  \xymatrix{
    \captwo{.07\textwidth}  &  \cuptwo{.07\textwidth}  \\
     0\to 2 & 2\to 0
  }
\]
The first corelation depicts the partition from $0$ to $2$ where both elements are in the same part, while the second is the same partition, but from $2$ to $0$. The identities in Definition \ref{def:zigzag} are sometimes called the ``zig-zag identities." This choice of name is clear when we express the laws in terms of string diagrams:

\[
  \xymatrixrowsep{1pt}
  \xymatrixcolsep{75pt}
  \xymatrix{
    \zigzaglaw{.07\textwidth}  =  \idone{.07\textwidth} =  \zigzaglawother{.07\textwidth}  \\
  }
\]

\begin{definition}
A \define{dagger compact category} is a dagger symmetric monoidal category that  is compact closed and such that the following diagram commutes for any $A$:

\[
  \xymatrix{
    I \ar[r]^-{\epsilon^{\dagger}_A} \ar[dr]_-{\eta_A} & A\otimes A^{*} \ar[d]^-{\Large \swap{1em}_{A,A^{*}}} \\
    & A^{*} \otimes A 
  }
\]
 where $\swap{1em}_{A,A^{*}}$ is the braiding on $A\otimes A^{*}$.
\end{definition}

\noindent Given a set $n$, the unit corelation $\eta_n \maps 0\to 2n$ is defined as having $n$ parts arranged in the following pattern:

\[
 \xymatrixrowsep{1pt}
 \xymatrixcolsep{75pt}
 \xymatrix{
    \capn{.07\textwidth} 
 }
\]

The dagger of this corelation is precisely the same as the counit $\epsilon_n \maps 2n\to 0$. No new connections are made when composing with the braiding morphism, so the corelation does not change. Thus we say that $\Fin\Corel$ is a dagger compact category.   Similar work can be found for $\Fin\Cospan$  in Fong \cite{BF}.

Note that any function $f \maps X \to Y$ between finite sets gives rise to two corelations $X \stackrel{f}{\rightarrow} Y \stackrel{1}{\leftarrow} Y$ and $Y \stackrel{1}{\rightarrow} Y \stackrel{f}{\leftarrow} X$, which are daggers of each other. We denote $f\maps X\to Y$ to mean $X \stackrel{f}{\rightarrow} Y \stackrel{1}{\leftarrow} Y$, where context determines if we are looking at the corelation or the function. Thus $f^{\dagger}\maps Y\to X$ is the corelation  $Y \stackrel{1}{\rightarrow} Y \stackrel{f}{\leftarrow} X$.

 Now consider the two unique functions $m \maps 2 \to 1$ and $i \maps 0 \to 1$. They induce corelations $m \maps 2 \to 1$, $i \maps 0 \to 1$, $m^{\dagger} \maps 1 \to 2$, $i^{\dagger} \maps 1 \to 0$.  We make some minor notational changes by calling $m^{\dagger} = d$ and $i^{\dagger}=e$. Next we represent these corelations with the following string diagrams:
\[
  \xymatrixrowsep{1pt}
  \xymatrixcolsep{30pt}
  \xymatrix{
    \mult{.1\textwidth}  & \unit{.1\textwidth}  & \comult{.4\textwidth} & \counit{.1\textwidth}  \\
    m\maps 1 + 1 \to 1 & i\maps 0 \to 1 & d\maps 1 \to 1 + 1 & e\maps 1 \to 0
  }
\]

\noindent These four corelations obey some interesting equations, which we briefly review.

\begin{definition}
Given a symmetric monoidal category $\C$, a \define{monoid} $(X,\mu, \iota)$ is an object $X\in \C$ together with a \define{multiplication} $\mu \maps X \otimes X \to X $ and \define{unit} $\iota \maps I \to X$ obeying the associative and unit laws. We can draw these laws using string diagrams as follows:
\[
  \xymatrixrowsep{.1pt}
  \xymatrixcolsep{1pt}
  \xymatrix{
    \assocl{.1\textwidth} &=& \assocr{.1\textwidth} &&&&&& \unitl{.1\textwidth} &=&
    \idone{.1\textwidth} &=&  \unitr{.1\textwidth} \\
     \mu (\mu\otimes \mathrm{id}_X) &=&  \mu(\mathrm{id}_X\otimes \mu) &&&&&& \mu (\iota \otimes \mathrm{id}_X) &=& \mathrm{id}_X  &=&  \mu (\mathrm{id}|_X\otimes \iota)
  }
\]
\end{definition}
\begin{definition}
A \define{comonoid} $(X,\delta,\epsilon)$ in $\C$ is an object $X \in \C$  together with a \define{comultiplication} $\delta \maps X \to X\otimes X $ and \define{counit} $\epsilon \maps X \to I$ obeying the coassociative and counit laws:
\[
  \xymatrixrowsep{.1pt}
  \xymatrixcolsep{1pt}
  \xymatrix{
    \coassocl{.1\textwidth} &=& \coassocr{.1\textwidth} &&&&&& \counitl{.1\textwidth} &=&
    \idone{.1\textwidth} &=&\counitr{.1\textwidth}   \\
     ( \mathrm{id}_X \otimes \delta) \delta &=& (\delta \otimes \mathrm{id}_X) \delta &&&&&& (\epsilon \otimes  \mathrm{id}_X) \delta &=&  \mathrm{id}_X  &=&  ( \mathrm{id}_X\otimes \epsilon)\delta
  }
\]
\end{definition}

\begin{definition}
A \define{Frobenius monoid} in a symmetric monoidal category $\C$ is a monoid $(X,\mu, \iota)$ together with a comonoid $(X,\delta,\epsilon)$ obeying the Frobenius laws:

\[
  \xymatrixrowsep{.1pt}
  \xymatrixcolsep{1pt}
  \xymatrix{
         \frobs{.1\textwidth} &=& \frobx{.1\textwidth} &=& \frobz{.1\textwidth} \\
     (\mathrm{id}_X \otimes \mu) (\delta \otimes \mathrm{id}_X) &=& \delta \mu  &=&  (\mu \otimes \mathrm{id}_X) (\mathrm{id}_X \otimes \delta)
  }
\]

\noindent A Frobenius monoid is
\begin{itemize}
\item \define{commutative} if:
\[
  \xymatrixrowsep{.1pt}
  \xymatrixcolsep{1pt}
  \xymatrix{
    \commute{.1\textwidth} &=& \mult{.07\textwidth} \\
     \mu \swap{1em} &=& \mu
  }
\]
where $\swap{1em}$ is the braiding on $X \otimes X$

\item \define{cocommutative} if:
\[
  \xymatrixrowsep{.1pt}
  \xymatrixcolsep{1pt}
  \xymatrix{
    \cocommute{.1\textwidth} &=& \comult{.07\textwidth} \\
  \swap{1em} \delta &=& \delta
  }
\]
Note that a Frobenius monoid is cocommutative if and only if it is commutative.

\item \define{symmetric} if:
\[
  \xymatrixrowsep{.1pt}
  \xymatrixcolsep{1pt}
  \xymatrix{
    \symmetric{.1\textwidth} &=& \multunit{.07\textwidth} \\
 \epsilon \mu \swap{1em} &=& \epsilon \mu
  }
\]

\item \define{cosymmetric} if:
\[
  \xymatrixrowsep{.1pt}
  \xymatrixcolsep{1pt}
  \xymatrix{
    \cosymmetric{.1\textwidth} &=& \comultcounit{.07\textwidth} \\
 \swap{1em} \delta \iota &=& \delta \iota
  }
\]
Note that a Frobenius monoid is cosymmetric if and only if it is symmetric.

\item \define{special} if:
\[
  \xymatrixrowsep{.1pt}
  \xymatrixcolsep{1pt}
  \xymatrix{
    \spec{.1\textwidth} &=&  \idone{.1\textwidth} \\
   \mu \delta &=& \mathrm{id}_X
  }
\]

\item \define{extra} if:
\[
  \xymatrixrowsep{.1pt}
  \xymatrixcolsep{1pt}
  \xymatrix{
\extral{.1\textwidth} &=& \idonezero{.1\textwidth} \\
\epsilon \iota &=& \mathrm{id}_I
  }
\]
\end{itemize}

\end{definition}

It can be shown that the object $(1,m,i,d,e)$ in $\Fin\Corel$ is an extraspecial commutative Frobenius monoid. The precise connection between such objects and corelations was worked out by Fong and the author \cite{CF}. To summarize, strict monoidal functors from $\Fin\Corel$ to symmetric monoidal categories correspond to extraspecial commutative Frobenius monoids. Additionally, the morphisms $m,i,d$, and $e$ ``generate" the morphisms of $\Fin\Corel$ in a sense that we discuss in Section \ref{sec:props}.

With the above background in mind we now turn our attention to ports. In electrical engineering a port is a pair of terminals at the end of a pair of wires with opposing current. The opposite current data is assigned to a pair of wires later when we look at semantics assigning functors in Section \ref{sec:functors}. So we define a port to be the following.

\begin{definition}
We say that a \define{port} is a copy of the object $2\in \Fin\Corel.$
\end{definition}


In terms of circuits, if we are given three ports, then there are two important ways to connect them. One way is with a  ``series junction" and the other is with a  ``parallel junction." These are also referred to as $1$-junctions and $0$-junctions, respectively, in bond graph literature \cite{HP}. We now study the morphisms in $\Fin\Corel$ which act as $1$-junctions. These morphisms correspond to a multiplication and comultiplication structure associated to the object $2$, which are a part of an overall larger Frobenius monoid structure associated to $2$.



\section{Series Junctions}
\label{sec:series}

The first monoid structure we equip to the object $2$ arises naturally as a monad constructed from an adjunction. We see that the morphisms associated to this monoid are comparable to $1$-junctions because of the relations obeyed. Later, in Proposition \ref{prop:blackbox}, where we prove that the morphisms are mapped to the behaviors corresponding to $1$-junctions, we see a more definitive reason for the association. After all, there is another very similar monoid in $\Fin\Corel$, which we look at in Section \ref{sec:parallel}, that  also obeys similar relations. 

The two morphisms,  $ d \circ i \maps 0 \to 2$ and $e\circ m \maps 2\to 0,$ form an adjunction in $\Fin\Corel$, viewed as a one object bicategory. To see that these morphisms form an adjunction, first draw them using string diagrams:

\[
  \xymatrixrowsep{1pt}
  \xymatrixcolsep{8pt}
  \xymatrix{
    \captwo{.07\textwidth}  :=  & \comultcounit{.07\textwidth} & = & d\circ i \maps 0\to 2 \\
    \cuptwo{.07\textwidth}  :=  & \multunit{.07\textwidth} & = & e\circ m\maps 0\to 2
  }
\]
\noindent We may then check that the zig-zag identities hold from $(1,m,i,d,e)$  being an extraspecial commutative Frobenius monoid. Note this also follows from $1$ being self-dual:

\[
  \xymatrixrowsep{1pt}
  \xymatrixcolsep{10pt}
  \xymatrix{
    \zigzag   && =  & \idone{.1\textwidth}\\
  }
\]

Now recall that from any adjunction one can construct a monad. For us this makes $2$ in $\Fin\Corel$ into a monoid with the morphisms $\mathrm{id}_1 + (e\circ m) + \mathrm{id}_1\maps 4 \to 2$ and $d\circ i\maps 0 \to 2$, acting as multiplication and the unit respectively. These are drawn as follows:

\[
  \xymatrixrowsep{1pt}
  \xymatrixcolsep{5pt}
  \xymatrix{
    m_2 &:= & \monadmult{.1\textwidth}  &= &   \mathrm{id}_1 + (e\circ m) + \mathrm{id}_1\maps 4 \to 2  \\
    i_2 &:= & \captwo{.1\textwidth}  &= & d\circ i \maps 0\to 2  \phantom{hadalssdedddi} \\
  }
\]

Although the way in which $(2,m_2,i_2)$ forms a monoid is well known \cite{RS}, we give the diagrammatic proof using our string diagrams in $\Fin\Corel$ for completeness.

Associativity follows from only the monoidal structure of $\Fin\Corel$, while the left and right unit laws follow from the zig-zag identities governing adjunctions:

\[
  \xymatrixrowsep{1pt}
  \xymatrixcolsep{5pt}
  \xymatrix{
    \monadassocl{.1\textwidth}  &= &\monadassocm{.1\textwidth}  &= &\monadassocr{.1\textwidth} \\
  }
\]
\[
  \xymatrixrowsep{1pt}
  \xymatrixcolsep{5pt}
  \xymatrix{
    \monadunitl{.1\textwidth}  &= & \identitytwo{.1\textwidth} &= & \monadunitr{.1\textwidth} \\
  }
\]

Since $1$ is an extraspecial commutative Frobenius monoid and since $\Fin\Corel$ is dagger compact, $2$ can be equipped with additional morphisms, which obey far more equations than just those of a monoid.  First, we turn $m_2$ and $i_2$ around to get two more corelations which we call $d_2$ and $e_2.$ These are drawn in the following way:

\[
  \xymatrixrowsep{1pt}
  \xymatrixcolsep{5pt}
  \xymatrix{
    d_2 & = & \monadcomult{.07\textwidth}   &= &   \mathrm{id}_1 + (d\circ i) + \mathrm{id}_1\maps 2 \to 4  \\
   e_2 & = &  \cuptwo{.07\textwidth}  &= & e\circ m \maps 0\to 2  \phantom{hadalsedddi} \\
  }
\]

\noindent and they give us a comonoid $(2,d_2,e_2)$:

\[
  \xymatrixrowsep{1pt}
  \xymatrixcolsep{5pt}
  \xymatrix{
    \monadcoassocl{.1\textwidth}  &= &\monadcoassocm{.1\textwidth}  &= &\monadcoassocr{.1\textwidth} \\
  }
\]
\[
  \xymatrixrowsep{1pt}
  \xymatrixcolsep{5pt}
  \xymatrix{
    \monadcounitl{.1\textwidth}  &= & \identitytwo{.1\textwidth} &= & \monadcounitr{.1\textwidth} \\
  }
\]
Series junctions between ports impose equations which allow one to simplify and redraw circuit diagrams. 
These simplifications correspond to how the monoid and comonoid structure interact.

\begin{theorem}\label{thm:series}
The object $(2,m_2,i_2,d_2,e_2)$ in $\Fin\Corel$ is an extraspecial symmetric Frobenius monoid.
\end{theorem}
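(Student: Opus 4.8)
The monoid and comonoid laws are already established in this section, so the plan is to prove the three remaining families of axioms---the Frobenius laws, the special and extra laws, and the symmetry law---and throughout I would exploit the fact that $m_2$, $i_2$, $d_2$, and $e_2$ are assembled entirely from identity strands together with the cap $e_2 = e\circ m$ and the cup $i_2 = d\circ i$ coming from the self-duality of $1$. Concretely, $m_2$ is two through-strands with a cap bridging the inner pair of inputs, and $d_2$ is two through-strands with a cup bridging the inner pair of outputs; this is exactly the ``pair of pants'' built from the adjunction $d\circ i \dashv e\circ m$ recorded above.

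For the Frobenius laws I would argue by topological deformation. Each of the three composites $(\id_2\otimes m_2)(d_2\otimes\id_2)$, $d_2 m_2$, and $(m_2\otimes\id_2)(\id_2\otimes d_2)$ is a diagram from $4$ to $4$, and pulling the cup and cap bridges taut---using only the zig-zag identities for the adjunction, exactly as was done to prove the unit laws---deforms all three into the same picture, in which the outer strands pass straight through while the inner strands carry one cap on the input side and one cup on the output side. Since $\Fin\Corel$ is symmetric monoidal and these diagrams are planar isotopic, the three expressions agree. As a check one may read off the underlying partitions of $4+4$: all three yield $\{a_1,b_1\},\{a_2,a_3\},\{b_2,b_3\},\{a_4,b_4\}$.

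The special and extra laws both reduce to evaluating a single floating loop. The extra law asks that $e_2 i_2=\id_I$, and since $e_2 i_2=(e\circ m)(d\circ i)=e\circ(m\circ d)\circ i$, the special law $m\circ d=\id_1$ for the Frobenius monoid $1$ collapses this to $e\circ i$, which is $\id_I$ by the extra law for $1$. For the special law $m_2 d_2=\id_2$, composing $d_2$ and then $m_2$ leaves the two outer strands as straight wires from input to output, while the two inner strands close up into a component with no terminals, namely the loop $(e\circ m)(d\circ i)$; by the computation just given this closed component equals $\id_I$ and is therefore erased on passage to corelations, leaving $\id_1\otimes\id_1=\id_2$. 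Thus both the special and the extra laws for $2$ are inherited from $1$ being extraspecial.

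Finally, for symmetry I would show $e_2 m_2 = e_2 m_2\circ\beta$, where $\beta$ is the braiding interchanging the two tensor copies of $2$. The morphism $e_2 m_2\maps 4\to 0$ is the ``nested cap'' whose partition pairs the outermost and the innermost inputs, that is $\{a_1,a_4\},\{a_2,a_3\}$, and this pattern is manifestly invariant under swapping the first port $(a_1,a_2)$ with the second port $(a_3,a_4)$, which is precisely the action of $\beta$. Hence the symmetric (equivalently cosymmetric) law holds. I would also point out, to justify ``symmetric'' rather than ``commutative'' in the statement, that $m_2\circ\beta\neq m_2$, since interchanging the ports genuinely changes the multiplication; so this Frobenius monoid is symmetric but not commutative. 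The main delicacy is the specialness argument: one must verify carefully that the boundaryless loop produced in $m_2 d_2$ really does evaluate to the identity on the monoidal unit and is thereby discarded, and this is the one place where the \emph{algebraic} content of $1$ being both special and extra---not merely the topology of the duality---is actually used.
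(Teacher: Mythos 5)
Your proposal is correct, and for the Frobenius, extra, and special laws it follows the paper's own route exactly: Frobenius by isotopy/zig-zag manipulation using only the monoidal structure, the extra law via $e_2 i_2 = e\circ(m\circ d)\circ i = e\circ i = \mathrm{id}_0$ (special then extra for $1$), and then specialness of $2$ by erasing the closed inner loop, which is exactly the extra law for $2$ just established. The one place you genuinely diverge is the symmetry law: the paper proves $e_2 m_2\circ\beta = e_2 m_2$ diagrammatically, first invoking symmetry of the multiplication on $1$ (which holds since it is commutative) and then using naturality of the braiding to slide the cup past the crossing and cancel the two braidings, whereas you simply compute the underlying corelation of $e_2 m_2$ as the nested-cap partition $\{a_1,a_4\},\{a_2,a_3\}$ and observe it is fixed by the port swap. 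Your semantic argument is shorter and is perfectly legitimate in $\Fin\Corel$ (the paper itself argues this way in the proof of Theorem \ref{thm:weakbimonoid}); the paper's equational argument buys generality, since it derives symmetry purely from the axioms of an extraspecial commutative Frobenius monoid in a symmetric monoidal category and so transports along any symmetric monoidal functor, which matters for results like Proposition \ref{prop:blackbox} and Conjecture \ref{con:presentation}. Your closing observation that $m_2\circ\beta\neq m_2$, so the monoid is symmetric but not commutative, is also correct and matches the remark the paper makes immediately after the theorem.
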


\begin{proof}

We already have the monoid and comonoid structure so we begin with the Frobenius laws. These follow due only to the monoidal structure of $\Fin\Corel$: 

\[
  \xymatrixrowsep{1pt}
  \xymatrixcolsep{5pt}
  \xymatrix{
    \monadfrobl{.1\textwidth}  &= & \monadfrobm{.1\textwidth} &= & \monadfrob{.1\textwidth} \\
  }
\]

\noindent The other Frobenius law is proven in a similar manner. For the extraspecial structure, it is necessary to first show the extra property, $e_2\circ i_2 = \mathrm{id}_0.$ This comes from first using the special property $m\circ d=\mathrm{id}_1$ and then the extra property $i\circ e=\mathrm{id}_0$:

\[
  \xymatrixrowsep{1pt}
  \xymatrixcolsep{5pt}
  \xymatrix{
    \monadextra{.1\textwidth}  &= & \monadextram{.1\textwidth} &= & \extral{.1\textwidth} \phantom{hf}=  \\
  }
\]

\noindent Using this fact we can show the special property, $d_2\circ m_2 = \mathrm{id}_2$:
\[
  \xymatrixrowsep{1pt}
  \xymatrixcolsep{5pt}
  \xymatrix{
    \monadspec{.1\textwidth}  &= & \identitytwo{.1\textwidth}   \\
  }
\]

\noindent Since the multiplication for $1$ is commutative, it is also symmetric. We use this symmetric  property to show the multiplication for $2$ is also symmetric. This gives us the following:
\[
  \xymatrixrowsep{1pt}
  \xymatrixcolsep{5pt}
  \xymatrix{
    \monadsymml{.1\textwidth}  &= & \monadsymmm{.1\textwidth}  &= & \monadsymmr{.1\textwidth}  &= & \monadsymmrr{.1\textwidth}   \\
  }
\]

\noindent We can further simplify this because of the naturality of the braiding, which allows us to move the cup past the braiding. Then we can use that we are in a symmetric monoidal category to cancel the two braidings:
\[
  \xymatrixrowsep{1pt}
  \xymatrixcolsep{5pt}
  \xymatrix{
    \monadsymmrr{.1\textwidth}   &= & \monadsymmcups{.13\textwidth}  &= & \monadsymmend{.13\textwidth} 
  }
\qedhere 
\]
\end{proof}

The morphisms $m_2$ and $d_2$ together with the equations between them correspond exactly to $1$-junctions and their equations. Due to this it is natural to try to define unary $1$-junctions using the morphisms $i_2$ and $e_2$.  In fact all of the properties obeyed involving the morphisms, $m_2,d_2,i_2$ and $,e_2$ do translate to relations which come from the equations associated to $1$-junctions. However, the braiding morphism does not fit perfectly into the analogy. If  braiding of bonds is allowed then the junctions would obey the commutative property, but we have only symmetry.


\section{Parallel Junctions}
\label{sec:parallel}

Next we equip another Frobenius monoid structure to the object $2$ through another very common construction.  The morphisms associated to this monoid act similarly to parallel junctions. The necessary relations are obeyed and also the semantics assigning functor in Proposition \ref{prop:blackbox} assigns this Frobenius monoid to the behaviors exhibited by $0$-junctions.

 From a pair of monoids $(X,m_X,i_X)$ and $(Y,m_Y,i_Y)$ in a braided monoidal category, there is a standard way to make $X\otimes Y$ into a monoid with $$ ( m_ X\otimes m_Y)\circ(\mathrm{id}_X \otimes \swap{1em} \otimes \mathrm{id}_Y)\maps (X\otimes Y) \otimes (X\otimes Y)\to (X\otimes Y)$$ as multiplication and $$i_X\otimes i_Y\maps I \to (X\otimes Y)$$ as the unit. 
\noindent Consider the object $1+1 =2 \in \Fin\Corel$ with the following morphisms:

\[
  \xymatrixrowsep{1pt}
  \xymatrixcolsep{8pt}
  \xymatrix{
    \mu_2 & :=& \parmult{.07\textwidth}  & =  & ( m + m)\circ(\mathrm{id}_1 + \swap{1em} + \mathrm{id}_1)\maps 4\to 2  \\
    \iota_2 & := & \parunit{.07\textwidth}  &  =  & i + i\maps 0 \to 2  \phantom{hadasffaasfaldfai} 
  }
\]

Then $(2,\mu_2,\iota_2)$ is a monoid in $\Fin\Corel$. Since both monoids in this construction are $(1,m,i)$, our new monoid inherits the same properties that $(1,m,i)$ has. We also have corelations $\delta_2$ and  $\epsilon_2$, which when drawn as string diagrams are reflections of $\mu_2$ and $\iota_2$ respectively: 

\[
  \xymatrixrowsep{1pt}
  \xymatrixcolsep{8pt}
  \xymatrix{
    \delta_2 & :=& \parcomult{.07\textwidth}  & =  & (\mathrm{id}_1 + \swap{1em} + \mathrm{id}_1) \circ ( d + d)\maps 2\to 4  \\
    \epsilon_2 &:= & \parcounit{.07\textwidth}  &  =  & e + e\maps 2 \to 0  \phantom{hadasffsfaddldfai} 
  }
\]

\begin{theorem}\label{thm:parallel}
The object $(2, \mu_2,\iota_2, \delta_2, \epsilon_2) $  in $\Fin\Corel$ is an extraspecial commutative Frobenius monoid.
\end{theorem}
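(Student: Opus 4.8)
The plan is to verify the Frobenius, commutative, special, and extra axioms one at a time, in each case reducing the statement for $2 = 1+1$ to the corresponding fact already established for the extraspecial commutative Frobenius monoid $(1,m,i,d,e)$, with the braiding morphisms bookkept using naturality of the braiding together with the symmetric monoidal coherence of $\Fin\Corel$. Conceptually, $(2,\mu_2,\iota_2,\delta_2,\epsilon_2)$ is simply the tensor product of the Frobenius monoid $(1,m,i,d,e)$ with itself, and the content of the theorem is that each of the properties ``Frobenius,'' ``commutative,'' ``special,'' and ``extra'' is preserved under tensor product in a symmetric monoidal category. This parallels the strategy used for Theorem \ref{thm:series}, except that here no adjunction is needed and the bookkeeping is slightly cleaner.

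First I would dispatch the comonoid structure essentially for free. Since $\Fin\Corel$ is a dagger category, and since the braiding and the generators satisfy $m^{\dagger}=d$, $i^{\dagger}=e$, and $\sigma^{\dagger}=\sigma$ for the symmetry $\sigma$, a direct computation of daggers gives $\delta_2 = \mu_2^{\dagger}$ and $\epsilon_2 = \iota_2^{\dagger}$. Because $(2,\mu_2,\iota_2)$ is already known to be a monoid, coassociativity and the counit laws for $(2,\delta_2,\epsilon_2)$ follow immediately as the daggers of associativity and the unit laws. Thus I need only verify the interaction axioms. For the Frobenius law $(\mathrm{id}_2 + \mu_2)(\delta_2 + \mathrm{id}_2) = \delta_2 \mu_2 = (\mu_2 + \mathrm{id}_2)(\mathrm{id}_2 + \delta_2)$, I would expand each $\mu_2$ and $\delta_2$ into its two copies of $m$ and $d$ wired through a braiding, then slide the braidings along the strands by naturality until the string diagram separates into two independent copies of the Frobenius law for $(1,m,i,d,e)$; the already-proven $1$-level Frobenius law then closes the argument. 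Only one of the two Frobenius laws needs checking, the other being its reflection (equivalently, its dagger).

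The remaining axioms are direct componentwise reductions. Commutativity $\mu_2 \circ \sigma = \mu_2$ follows by pulling the braiding through and invoking the commutativity $m \circ \sigma = m$ of the monoid $1$. For the special law $\mu_2 \circ \delta_2 = \mathrm{id}_2$, composing the two expressions causes the two internal braidings to cancel, since $\sigma \circ \sigma = \mathrm{id}$ in a symmetric monoidal category; the diagram collapses to $(m\circ d) + (m \circ d)$, and the special law $m \circ d = \mathrm{id}_1$ gives $\mathrm{id}_1 + \mathrm{id}_1 = \mathrm{id}_2$. For the extra law, $\epsilon_2 \circ \iota_2 = (e+e)\circ(i+i) = (e\circ i) + (e\circ i)$, and $e \circ i = \mathrm{id}_0$ yields $\mathrm{id}_0 + \mathrm{id}_0 = \mathrm{id}_0$. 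Note that unlike Theorem \ref{thm:series} this monoid is genuinely commutative rather than merely symmetric, precisely because the tensor-product construction transports the commutativity of $1$ directly to $2$; cocommutativity then comes for free since a Frobenius monoid is commutative if and only if it is cocommutative.

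The main obstacle is the Frobenius law: it is the only step where the two braiding morphisms appearing in $\mu_2$ and $\delta_2$ must both be tracked simultaneously, and one must confirm, using naturality of the braiding and symmetric coherence, that after sliding them the diagram truly decomposes into two disjoint copies of the $1$-level Frobenius law, leaving no residual crossing. Every other axiom is a routine componentwise application of the corresponding property of $(1,m,i,d,e)$, so I would present the Frobenius verification as a short sequence of string-diagram equalities and merely state that the remaining laws reduce in the same fashion.
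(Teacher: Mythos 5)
Your proposal is correct and takes essentially the same route as the paper: every axiom for $(2,\mu_2,\iota_2,\delta_2,\epsilon_2)$ is inherited componentwise from the extraspecial commutative Frobenius monoid $(1,m,i,d,e)$ by sliding the internal braidings around via naturality and symmetric coherence (the special and extra laws collapsing exactly as you compute). The only difference is a minor economization: you get the comonoid laws and the second Frobenius law for free from the dagger structure ($\delta_2=\mu_2^{\dagger}$, $\epsilon_2=\iota_2^{\dagger}$), where the paper simply draws the mirror-image string diagrams.
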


\begin{proof}

Each law is inherited from the extraspecial commutative Frobenius monoid $1$ due to the naturality of the braiding. We start with the monoid laws:
\[
  \xymatrixrowsep{1pt}
  \xymatrixcolsep{5pt}
  \xymatrix{
    \parassocl{.12\textwidth}  &= & \parassocr{.12\textwidth}   
  }
\]

\vspace{-3ex}

\[
  \xymatrixrowsep{1pt}
  \xymatrixcolsep{5pt}
  \xymatrix{
    \parunitl{.1\textwidth}  &= & \identitytwo{.12\textwidth} &= & \parunitr{.11\textwidth} 
  }
\]

\noindent Similarly we get a comonoid structure:
\[
  \xymatrixrowsep{1pt}
  \xymatrixcolsep{5pt}
  \xymatrix{
    \parcoassocl{.12\textwidth}  &= & \parcoassocr{.12\textwidth}   
  }
\]

\vspace{-3ex}

\[
  \xymatrixrowsep{1pt}
  \xymatrixcolsep{5pt}
  \xymatrix{
    \parcounitl{.1\textwidth}  &= & \identitytwo{.12\textwidth} &= & \parcounitr{.1\textwidth} 
  }
\]

 \noindent We next have the Frobenius laws:
\[
  \xymatrixrowsep{1pt}
  \xymatrixcolsep{5pt}
  \xymatrix{
    \parfrobl{.12\textwidth}  &= & \parfrobm{.12\textwidth} &= & \parfrobr{.12\textwidth} 
  }
\]

\noindent Then we have the extra and special laws:
\[
  \xymatrixrowsep{1pt}
  \xymatrixcolsep{5pt}
  \xymatrix{
	\parextra{.11\textwidth} & = & \\
    \parspec{.12\textwidth}  &= & \identitytwo{.12\textwidth}  
  }
\]

\noindent Finally, we have commutativity:
\[
  \xymatrixrowsep{1pt}
  \xymatrixcolsep{5pt}
  \xymatrix{
    \parcoml{.13\textwidth}  &= &  \parcomm{.13\textwidth} &= &  \parcomr{.13\textwidth}   
  }
\qedhere 
\]
\end{proof}

In terms of circuits this says that the connectivity of parallel junctions is all that matters. Unlike the Frobenius monoid associated to $1$-junctions, this Frobenius monoid perfectly captures the properties of the $0$-junctions, their unary versions, and their connection to a braiding.  Next we show that the way in which these two Frobenius monoids corresponds nicely to the way in which series and parallel junctions interect. However, we shall see some issues that arise, which force us to consider a second approach.


\section{Weak Bimonoids}
\label{sec:bimonoids}

We have found two Frobenius monoid structures on the object $2 \in \Fin\Corel$, one related to series junctions and one related to parallel junctions.  We now describe how these two structures $(2,m_2,i_2,d_2,e_2)$ and $(2, \mu_2, \iota_2, \delta_2, \epsilon_2)$ interact.

\begin{definition} A \define{weak bimonoid} in a braided monoidal bicategory $\C$ is a pair consisting of a  monoid $(X,\mu, \iota)$ in $\C$ together with a comonoid $(X,\delta,\epsilon)$ in $\C$ obeying:

\[
  \xymatrixrowsep{1pt}
  \xymatrixcolsep{5pt}
  \xymatrix{
    \bi{.13\textwidth}  &= & \frobx{.11\textwidth} 
  }
\]

\vspace{-3ex}

\[
  \xymatrixrowsep{1pt}
  \xymatrixcolsep{5pt}
  \xymatrix{
    \weakbil{.07\textwidth}  &= & \weakbim{.13\textwidth} &= & \weakbir{.16\textwidth}  \\
    \weakcobil{.07\textwidth}  &= & \weakcobim{.14\textwidth} &= & \weakcobir{.16\textwidth}
  }
\]

\end{definition}

This differs from a \define{bimonoid}, where the second and third sets of equations are replaced by:

\[
  \xymatrixrowsep{.1pt}
  \xymatrixcolsep{5pt}
  \xymatrix{
     \parcounit{.06\textwidth}  &= & \multunit{.07\textwidth}  \\
     \parunit{.06\textwidth}   &=  & \comultcounit{.07\textwidth} \\
	\parextra{.09\textwidth} & = &
}
\]
It is a fun exercise to show that every bimonoid is a weak bimonoid using string diagrams. Frobenius monoids and bimonoids are the more commonly studied structures that combine monoids and comonoids, while weak bimonoids are much newer. These were introduced by Pastro and Street in their work on quantum categories \cite{PS}. They also proved the following fact, which we use to show how our monoids interact.

\begin{theorem}[Pastro, Street, \cite{PS}]\label{thm:RS}
If $X$ is a special commutative Frobenius monoid in a braided monoidal category then $X\otimes X$, equipped with the following morphisms, is a weak bimonoid:
\[
  \xymatrixrowsep{1pt}
  \xymatrixcolsep{25pt}
  \xymatrix{
 	\weirdmult{.06\textwidth}  &  \parunit{.06\textwidth}    &  \monadcomult{.07\textwidth}   &\cuptwo{.07\textwidth}  \\
  }
\]
\end{theorem}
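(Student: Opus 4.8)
The plan is to verify, by string-diagram manipulation, that the four displayed morphisms satisfy the monoid axioms, the comonoid axioms, and then the three families of weak-bimonoid equations given in the definition above. Throughout I would use only the standing hypotheses that $X$ is a \emph{special commutative} Frobenius monoid: associativity and unitality of $(X,\mu,\iota)$, coassociativity and counitality of $(X,\delta,\epsilon)$, the Frobenius law $(\mathrm{id}_X\otimes\mu)(\delta\otimes\mathrm{id}_X)=\delta\mu=(\mu\otimes\mathrm{id}_X)(\mathrm{id}_X\otimes\delta)$, commutativity $\mu\beta=\mu$ (equivalently cocommutativity), and speciality $\mu\delta=\mathrm{id}_X$, where $\beta$ denotes the braiding. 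I would note at the outset that the \emph{extra} law is \emph{not} needed here, which is why the theorem requires only special commutative rather than extraspecial.

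First I would confirm that the stated multiplication and unit make $X\otimes X$ a monoid and that the stated comultiplication and counit make it a comonoid. The multiplication is essentially the braided-tensor-product multiplication built from $\mu$ on each factor with the inner strands crossed, and the unit is $\iota\otimes\iota$; associativity and the two unit laws then reduce to the corresponding laws for $X$ together with naturality and the hexagon identities for $\beta$. The comonoid structure is handled dually, using $\delta$, $\epsilon$, and $\beta$, so this bookkeeping is routine once the first case is set up.

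The heart of the argument is the first weak-bimonoid equation, which asserts that the ``bialgebra square'' (comultiply both inputs, braid the inner strands, then multiply both outputs) equals $\delta\mu$. I would expand the left-hand diagram and then repeatedly apply the Frobenius law to slide a multiplication vertex past a comultiplication vertex, use commutativity and cocommutativity to reroute or absorb braidings, and finally invoke speciality to collapse each adjacent $\mu\delta$ into a plain identity strand. The remaining two families — the weak unit laws $\weakbil{.07\textwidth}=\weakbim{.12\textwidth}=\weakbir{.15\textwidth}$ and the weak counit laws — follow by plugging the unit $\iota\otimes\iota$ into the comultiplication (respectively composing the counit with the multiplication) and simplifying by the very same moves, with unitality supplying the collapse of the spurious factor.

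The step I expect to be the main obstacle is the careful tracking of braidings in the bialgebra-square computation. Since $X$ lives in a merely \emph{braided} (not symmetric) category, I cannot cancel a composite $\beta\beta$ nor slide arbitrary strands past one another; every crossing must be discharged either through a hexagon identity or through the commutativity of $\mu$ and cocommutativity of $\delta$. Keeping the crossings consistent while applying the Frobenius and special laws — so that the diagram genuinely reduces to $\delta\mu$ rather than to a braided variant of it — is where the real care is required, and it is also the point at which the hypothesis of commutativity (as opposed to mere associativity) becomes indispensable.
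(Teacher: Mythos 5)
Your plan goes wrong at the identification of the comonoid half of the structure, and this is not a cosmetic slip: it changes which theorem you are proving. The comultiplication displayed in the statement is the ``cap'' morphism $\mathrm{id}_X \otimes (\delta\circ\iota)\otimes \mathrm{id}_X \colon X\otimes X \to (X\otimes X)\otimes(X\otimes X)$ and the counit is the ``cup'' $\epsilon\circ\mu\colon X\otimes X\to I$; both come from the self-duality of $X$ induced by the Frobenius structure, exactly as in Section \ref{sec:series} of the paper, where $d_2 = \mathrm{id}_1 + (d\circ i)+\mathrm{id}_1$ and $e_2 = e\circ m$. Your proposal instead takes the comonoid ``dually, using $\delta$, $\epsilon$, and $\beta$,'' i.e.\ the braided tensor-product comonoid $(\mathrm{id}\otimes\beta\otimes\mathrm{id})(\delta\otimes\delta)$ with counit $\epsilon\otimes\epsilon$. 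With that choice $X\otimes X$ is simply the tensor product of $X$ with itself as a Frobenius monoid --- a different statement. The entire content of Pastro and Street's theorem, and the reason this paper needs it (Theorem \ref{thm:weakbimonoid} applies it with $d_2$ and $e_2$, the cap and cup), is the \emph{mismatch} between the tensor-product monoid and the duality comonoid: that mismatch is what yields a weak bimonoid rather than a bimonoid or a Frobenius monoid.

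The misidentification propagates through the rest of your outline. Coassociativity of the correct comultiplication is a purely formal consequence of the monoidal structure (a cap flanked by identity strands), and its counit laws are exactly the zig-zag identities for the self-dual object $X$; they do not ``reduce to the corresponding laws for $X$'' as you assert for your candidate. Likewise the weak unit and counit laws cannot be discharged by ``unitality collapsing the spurious factor'': with the correct structure the relevant composite is, e.g., $(\mathrm{id}_X \otimes (\delta\iota)\otimes \mathrm{id}_X)\circ(\iota\otimes\iota) = \iota\otimes(\delta\iota)\otimes\iota$, and the verification genuinely uses the Frobenius and special laws (in the paper's $\Fin\Corel$ analogue this is handled by a connectivity argument; in a general braided category one needs honest zig-zag manipulations). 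Your toolkit for the bialgebra-square law --- Frobenius moves, commutativity and cocommutativity to absorb braidings, speciality to cancel $\mu\delta$ --- is the right one, and it matches how the paper proves its own analogue; but the diagram you would be simplifying contains the wrong comultiplication vertices. Note finally that the paper does not prove this statement at all: it imports it from Pastro and Street, so the closest in-paper argument to compare against is the proof of Theorem \ref{thm:weakbimonoid}, which is built on precisely the cap/cup comonoid that your proposal replaced.
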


Since the Frobenius monoid $(1,m,i,d,e)$ is also commutative, we can replace the multiplication morphism with the following:
\[
  \xymatrixrowsep{1pt}
  \xymatrixcolsep{25pt}
  \xymatrix{
 	\parmult{.07\textwidth} \\
  }
\]
\vspace{-3ex}

\noindent and the theorem still holds. This gives us a weak bimonoid, but since  $(1,m,i,d,e)$ is an extraspecial commutative Frobenius monoid our weak bimonoid will obey some more relations.  In fact,  because $\Fin\Corel$ is dagger compact there is a second weak bimonoid.

\begin{theorem}\label{thm:weakbimonoid}
$(2,\mu_2,\iota_2, d_2,e_2)$ and $(2,m_2,i_2,\delta_2,\epsilon_2)$ are weak bimonoids. Additionally:
\begin{itemize}
\item  the extra law for both weak bimonoids, $\epsilon_2\circ i_2 =\mathrm{id}_0= e_2 \circ \iota_2$
\item  $m_2\circ \delta_2 = \mu_2 \circ d_2$
\item $(m_2\circ \delta_2)^2 = m_2 \circ \delta_2$.
\end{itemize}
\end{theorem}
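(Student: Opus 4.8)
The plan is to obtain both weak bimonoid structures from Theorem~\ref{thm:RS} and then read off the three extra properties by short diagrammatic computations. Because $(1,m,i,d,e)$ is a special commutative Frobenius monoid, Theorem~\ref{thm:RS} applied to $X=1$ makes $2=1\otimes 1$ into a weak bimonoid carrying the four Pastro--Street generators. Using that $(1,m,i,d,e)$ is moreover \emph{commutative}, I would first replace the Pastro--Street multiplication by the parallel multiplication $\mu_2$, exactly as sanctioned by the remark immediately following Theorem~\ref{thm:RS}; the remaining three generators are then visibly $\iota_2$, $d_2$, and $e_2$. This yields the first weak bimonoid $(2,\mu_2,\iota_2,d_2,e_2)$.

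For the second weak bimonoid I would apply the dagger. Since $\Fin\Corel$ is dagger compact and $\mu_2^{\dagger}=\delta_2$, $\iota_2^{\dagger}=\epsilon_2$, $d_2^{\dagger}=m_2$, $e_2^{\dagger}=i_2$, taking daggers throughout converts $(2,\mu_2,\iota_2,d_2,e_2)$ into $(2,m_2,i_2,\delta_2,\epsilon_2)$, with the monoid and comonoid roles swapped. The point that needs justification --- and the step I expect to demand the most care --- is that the dagger of a weak bimonoid is again a weak bimonoid. This holds because the defining axioms are closed under $(-)^{\dagger}$: the bialgebra-type axiom is self-dual once one uses that the braiding is symmetric and hence fixed by the dagger, while the two families of weak unit and counit axioms are interchanged. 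I would verify this by writing each defining diagram algebraically and checking that its dagger is again a defining diagram for the structure whose multiplication and comultiplication are the daggers of the original comultiplication and multiplication.

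The three extra properties are then short verifications. For the extra laws I would reduce both to the extra law $e\circ i=\id_0$ of $(1,m,i,d,e)$: since $\epsilon_2=e+e$ and $i_2=d\circ i$, the counit law for $1$ gives $(e+e)\circ d=e$, so $\epsilon_2\circ i_2=e\circ i=\id_0$; dually, since $e_2=e\circ m$ and $\iota_2=i+i$, the unit law gives $m\circ(i+i)=i$, whence $e_2\circ\iota_2=e\circ i=\id_0$. For $m_2\circ\delta_2=\mu_2\circ d_2$ I would identify the underlying corelations: tracing the generators shows $m_2\circ\delta_2$ is the corelation $2\to 2$ placing all four terminals in a single block, since the two copies of $d$ inside $\delta_2$ connect the output wires back toward the input wires and the central $e\circ m$ of $m_2$ fuses the two resulting components. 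The composite $\mu_2\circ d_2$ produces the same block, the cap $d\circ i$ supplied by $d_2$ being absorbed by the two multiplications of $\mu_2$. Idempotency of $m_2\circ\delta_2$ is then immediate: composing the all-in-one-block corelation with itself merges the shared middle port into the block, and forcing the result to be jointly epic returns the same corelation.
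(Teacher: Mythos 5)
Your proposal is correct, but it establishes the two weak bimonoid structures by a genuinely different route than the paper. The paper does not use Theorem \ref{thm:RS} as a black box: its proof re-derives the weak bimonoid laws for $(2,\mu_2,\iota_2,d_2,e_2)$ by explicit string-diagram computation (naturality of the braiding and cocommutativity of $(1,d,e)$, then the Frobenius and special laws for the bialgebra-type axiom; a corelation-counting argument for the counit family; passing units through the braiding for the unit family), and it dismisses the second structure with the remark that it ``amounts to the same thing.'' You instead cite Theorem \ref{thm:RS} together with the commutativity replacement --- which in $\Fin\Corel$ is just the observation that the twisted Pastro--Street multiplication literally equals $\mu_2$ because $m$ is commutative --- and then obtain $(2,m_2,i_2,\delta_2,\epsilon_2)$ by applying the dagger, using $d_2^{\dagger}=m_2$, $e_2^{\dagger}=i_2$, $\mu_2^{\dagger}=\delta_2$, $\iota_2^{\dagger}=\epsilon_2$. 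The dagger-closure of the weak bimonoid axioms is the one genuinely new ingredient in your argument, and your justification of it is sound: the bialgebra-type axiom is self-dual because $c^{\dagger}=c^{-1}=c$ for the braiding on $2\otimes 2$ in a symmetric monoidal dagger category, while the weak counit and weak unit families of axioms are exchanged, matching the exchange of monoid and comonoid roles under $(-)^{\dagger}$. Your route buys brevity and makes the paper's ``amounts to the same thing'' rigorous; what the paper's longhand derivation buys is self-containedness and an explicit inventory of diagrammatic equations, which matters for Conjecture \ref{con:presentation}, where exactly these equations are proposed as a presentation of $\Fin\Corel^{\circ}$. On the three additional bullet points you and the paper essentially coincide: the extra laws reduce to $e\circ i=\mathrm{id}_0$ in both treatments, the identification of $m_2\circ\delta_2$ and $\mu_2\circ d_2$ with the all-in-one-block corelation is the paper's own first justification (the paper also notes both equal $d\circ m$), and for idempotency you compose corelations directly where the paper instead writes $m_2\circ\delta_2=d\circ m$ and invokes the special law $m\circ d=\mathrm{id}_1$; both are one-line arguments.
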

\begin{proof}
 We first prove that $(2,\mu_2,\iota_2, d_2,e_2)$ is a weak bimonoid. Note that proving $(2,m_2,i_2,\delta_2,\epsilon_2)$ is a weak bimonoid amounts to the same thing. Our diagrammatic proofs are similar to those used by Pastro and Street \cite{PS}.

\noindent The first weak bimonoid law to show is: $$(\mu_2 + \mu_2)\circ (\mathrm{id}_2 + \swap{3ex}_{2,2} + \mathrm{id}_2) \circ (d_2 + d_2) = d_2\circ \mu_2. $$ We begin by using the naturality of the braiding and then we use the cocommutativity of the comonoid $(1,d,e)$.  Following this we need the Frobenius and special laws for $(1,m,i,d,e)$:

\vspace{-1ex}

\[
  \xymatrixrowsep{1pt}
  \xymatrixcolsep{5pt}
  \xymatrix{
  &   & \bigbimonoidone{.35\textwidth}  &= & \bigbimonoidtwo{.35\textwidth}  \\
     &= & \bigbimonoidthree{.35\textwidth}  &= &   \bigbimonoidfour{.35\textwidth}\\
	&= &     \bigbimonoidfive{.35\textwidth} &= &  \bigbimonoidsix{.35\textwidth} \\
 	&=& \bigbimonoidseven{.35\textwidth}
  }
\]

\vspace{-1ex}

\noindent Next we have the other weak bimonoid laws:
\[
  \xymatrixrowsep{1pt}
  \xymatrixcolsep{5pt}
  \xymatrix{
    \weakbimonoidtwol{.12\textwidth}  &= & \weakbimonoidtwom{.12\textwidth} &= & \weakbimonoidtwor{.12\textwidth}   \\
  }
\]
\noindent The easiest way to see that this is true is to note that these are both corelations from $6$ inputs to $0$ outputs where all of the inputs are path connected to each other. Thus they are both the partition with all inputs in the same part, and no outputs, so they are the same corelation. 

The last weak bimonoid law  is the following:
\[
  \xymatrixrowsep{1pt}
  \xymatrixcolsep{5pt}
  \xymatrix{
    \weakbimonoidcotwol{.1\textwidth}  &= &\weakbimonoidcotwom{.1\textwidth}  &= &\weakbimonoidcotwor{.12\textwidth} \\
  }
\]
\noindent This comes from passing the two inner unit morphisms through the braiding and then using the unit laws to delete them. Then we merely have two caps in between two units as we want. This shows that $(2,\mu_2,\iota_2, d_2,e_2)$ is a weak bimonoid. Next we prove one of the extra laws since the other is similar:
\[
  \xymatrixrowsep{1pt}
  \xymatrixcolsep{5pt}
  \xymatrix{
   \bimonoidextral{.07\textwidth} &= &  \bimonoidextram{.13\textwidth} \enspace  &= & \extral{.12\textwidth} 	\phantom{hf}=  \\
}
\]

\noindent We also have $m_2\circ \delta_2 =d\circ m= \mu_2 \circ d_2$:
\[
  \xymatrixrowsep{1pt}
  \xymatrixcolsep{5pt}
  \xymatrix{
   \strangelawl{.07\textwidth} &= &  \frobx{.1\textwidth} &= &  \strangelawr{.07\textwidth} \\
}
\]
\noindent One way to see this is that these all describe the corelation from $2$ to $2$ where all of the inputs and outputs are in the same part. We may also understand this by noticing that commutativity allows the inner ``loop" to be drawn as a cup or cap.
\noindent Finally, $(m_2\circ \delta_2)^2 = m_2 \circ \delta_2$, comes from $d\circ m$ being idempotent, since $d\circ m \circ d\circ m = d\circ id_2\circ m = d\circ m$. It follows that $ (\mu_2 \circ d_2)^2 =  \mu_2 \circ d_2$.
\end{proof}

The weak bimonoid laws encapsulate the same properties obeyed by interacting $1$- and $0$-junctions, including their unary versions. Unfortunately the law: $$m_2\circ \delta_2 = \mu_2 \circ d_2$$ is not a property that holds for bond graphs. It would indicate that a $0$-junction followed by a $1$-junction is equivalent to the reverse, however they are opposite to eachother, not the same. 

 We next want to define a symmetric monoidal subcategory of $\Fin\Corel$ using the morphisms $m_2,i_2,\delta_2,\epsilon_2,\mu_2,\iota_2, d_2$, and $e_2$ as generators. In order to describe this subcategory in a quick and simple way we first introduce the framework of ``props."  These types of categories can be described with generators and equations. Another use for props is that we can use them to easily define functors. The payoff comes in Section \ref{sec:functors} when we discuss the functorial semantics, first introduced by Lawvere \cite{Law}, of bond graphs.


\section{Props}
\label{sec:props}


Props were first introduced by Mac Lane \cite{Ma65}, where they were called PROPs. This acronym stands for ``product and permutation categories," and we choose to write ``prop" instead of ``PROP." Originally they were used to extend ``Lawvere theories" \cite{Law}, which are categories with finite cartesian products where every object is of the form $X^n$ for some distinguished object $X$. Lawvere theories act as theories for structures which are sets equipped with $n$-ary operations.  Props are more general because they use tensor powers in a monoidal category and because they allow for operations with any finite number of inputs and outputs. 

\begin{definition}
A \define{prop} is a strict symmetric monoidal category having the natural
numbers as objects, with the tensor product of objects given by addition.  We define a morphism of props to be a strict symmetric monoidal functor that is the identity on objects.  Let $\define{\PROP}$ be the category of props.
\end{definition}

\begin{definition}
If $\T$ is a prop and $\C$ is a symmetric monoidal category, an 
\define{algebra of} $\T$ \define{in} $\C$ is a strict symmetric monoidal functor 
$F \maps \T \to \C$.   We define a morphism of algebras of $\T$ in  
$\C$ to be a monoidal natural transformation between such functors. 
\end{definition}

The next result implies that $\Fin\Corel$ and $\Fin\Corel^{\circ}$ are equivalent to props. 

\begin{proposition} \cite{BC}
\label{prop:strictification_1}
A symmetric monoidal category $\C$ is equivalent to a prop if and only if there is an object $x \in \C$ such that every object of $\C$ is isomorphic to $x^{\otimes n}$ for some $n \in \N$.  
\end{proposition}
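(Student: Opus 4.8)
The plan is to prove the biconditional Proposition~\ref{prop:strictification_1} by establishing each direction separately, with the substantive content lying in the ``if'' direction. Throughout, I would use that a prop is by definition a strict symmetric monoidal category whose objects are the natural numbers under addition, so a distinguished generating object is simply the object $1$, and every object $n$ is literally $1^{\otimes n}$.

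For the ``only if'' direction, suppose $\C$ is equivalent to a prop $\T$ via a symmetric monoidal equivalence $E \maps \T \to \C$. Take $x = E(1)$. Since the objects of $\T$ are exactly the $n = 1^{\otimes n}$, and $E$ is (strong) symmetric monoidal, we have $E(n) \cong E(1)^{\otimes n} = x^{\otimes n}$ using the structure isomorphisms of $E$. Because $E$ is an equivalence, it is essentially surjective, so every object of $\C$ is isomorphic to some $E(n)$, hence to $x^{\otimes n}$. This direction is routine and I would state it briefly.

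For the ``if'' direction, assume there is an object $x \in \C$ with every object isomorphic to some $x^{\otimes n}$. The goal is to build a prop $\T$ and an equivalence $\T \simeq \C$. The natural choice is to take $\T$ to be the full subcategory of $\C$ on the objects $\{x^{\otimes n} : n \in \N\}$, and then strictify. The plan is: first observe $\T$ is symmetric monoidal (the tensor of $x^{\otimes m}$ and $x^{\otimes n}$ is $x^{\otimes(m+n)}$, so the subcategory is closed under $\otimes$), and that the inclusion $\T \hookrightarrow \C$ is a symmetric monoidal equivalence since every object of $\C$ lies in the isomorphism-closure of $\T$. Then apply a coherence/strictification theorem (Mac~Lane's strictification for monoidal categories, in its symmetric form) to replace $\T$ by a strict symmetric monoidal category $\T'$ with $\T' \simeq \T$, whose objects are freely generated words in $x$ under the strict tensor; relabelling the word $x^{\otimes n}$ as the natural number $n$ exhibits $\T'$ as a prop with tensor product given by addition. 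Composing equivalences gives $\C \simeq \T \simeq \T'$, a prop.

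The main obstacle I expect is the strictification step: naively the full subcategory $\T$ need not have its associators and unitors equal to identities, nor its objects literally equal to $x^{\otimes n}$ (the object $x^{\otimes 3}$ could mean $(x \otimes x)\otimes x$ or $x \otimes (x \otimes x)$, which are only isomorphic). The clean way to handle this is to invoke the general theorem that every symmetric monoidal category is symmetric-monoidally equivalent to a strict one (so that the monoidal and symmetric coherence data become trivial), and to note that after strictification the monoid of objects is the free commutative monoid on the single generator $x$, i.e.\ isomorphic to $(\N,+)$. Rather than reprove coherence, I would cite it and emphasize only that the generation hypothesis forces the object monoid to be singly generated, which is exactly what makes the strict category a prop rather than merely a strict symmetric monoidal category. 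Since the paper attributes this proposition to~\cite{BC}, I would keep the argument at the level of indicating these two moves and the coherence citation rather than spelling out the strictification construction.
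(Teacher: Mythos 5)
Your ``only if'' direction is fine, and your overall plan for the converse (pass to the tensor powers of $x$, strictify, then identify the result with a prop) is the same strategy as the proof this paper cites from \cite{BC}. But there is a genuine gap at the one non-routine step. Mac Lane's strictification, applied to your full subcategory $\T \subseteq \C$ on chosen tensor powers of $x$, does \emph{not} produce a category whose object monoid is $(\N,+)$: the standard strictification has as objects all finite words in the objects of $\T$, i.e.\ words in the countably many letters $x^{\otimes 0}, x^{\otimes 1}, x^{\otimes 2}, \dots$, so its object monoid is free on countably many generators. Your assertion that ``the generation hypothesis forces the object monoid to be singly generated'' conflates generation up to isomorphism (which is what the hypothesis gives, and which survives strictification) with literal equality of objects (which is what a prop requires: the objects are exactly the natural numbers and the tensor product of objects is addition). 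No coherence theorem converts the former into the latter by itself; also, strictness does not make the tensor commutative on objects, so ``free commutative monoid'' is not what any strictification yields.

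The missing step, which is exactly where the content of the proposition lies and what \cite{BC} supplies, is a restriction/re-indexing after strictification: either (a) inside the strictification, pass to the full subcategory on words in the single letter $x$ --- this is strict, its objects are literally indexed by $\N$ with concatenation as tensor, hence a prop, and the inclusion is full, faithful, and essentially surjective (every word in the letters $x^{\otimes n_i}$ is isomorphic to the word of $x$'s of length $\sum_i n_i$), hence an equivalence --- or (b) define the prop directly, taking $\hom(m,n) := \C(x^{\otimes m}, x^{\otimes n})$ for fixed bracketings, with composition from $\C$ and with tensor of morphisms obtained by conjugating by the coherence isomorphisms $x^{\otimes m}\otimes x^{\otimes p} \cong x^{\otimes (m+p)}$, Mac Lane coherence guaranteeing that this is well defined, strict, and that the evident functor to $\C$ is an equivalence. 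Note also that even in a strict ambient category the literal powers of $x$ may coincide for different exponents (e.g.\ when $x$ is the unit object), so the full subcategory on them need not itself be a prop; the re-indexing by $\N$ in (b) is what handles this degenerate case. With either repair, your argument goes through.
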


\noindent Given a prop, one attempts to figure out its algebras. We provide some examples where this has been done.

\begin{example}
\label{ex:finset}
The category of finite sets and functions can be made into a symmetric monoidal category where the tensor product of sets is their disjoint union.  Proposition \ref{prop:strictification_1}  implies that this symmetric monoidal category is equivalent to a prop.  We write $\define{\Fin\Set}$ to mean this prop.  We identify this prop with a skeleton of the category of finite sets and functions, having finite ordinals $0, 1, 2, \dots$ as objects.

The algebras of $\Fin\Set$ are commutative monoids \cite{Pi}. We can see why by examining the unique functions $m \maps 2 \to 1$ and $i\maps 0 \to 1$. Given a symmetric monoidal functor $F\maps \Fin\Set \to \C$, the object $F(1)$ is a commutative monoid in $\C$ with maps $F(m) \maps F(1) \otimes F(1) \to F(1)$ and $F(i) \maps F(0) \to F(1)$. On the other hand, any commutative monoid in $\C$ comes from a unique choice of $F$, which is harder to show. 
\end{example}

\begin{example}
\label{ex:fincorel}
Proposition \ref{prop:strictification_1} implies that $\Fin\Corel$ is equivalent to a prop.  We call this prop $\define{\Fin\Corel}$, and we identify this prop with a skeleton of the category of finite sets and corelations having finite ordinals as objects.  From now on we use $\Fin\Corel$ to mean the equivalent prop.

Fong and the author \cite{CF} showed that the algebras of $\Fin\Corel$ are extraspecial commutative Frobenius monoids. To see why, recall that the corelations $m \maps 2 \to 1$, $i \maps 0 \to 1$, $d \maps 1 \to 2$, $e \maps 1 \to 0$  make the object $1$ into an extraspecial Frobenius commutative monoid in $\Fin\Corel$. Then it follows from Lack \cite{La}, who showed that the algebras of $\Fin\Cospan$ are special commutative Frobenius monoids.

\end{example}

Additionally, Fong and the author \cite{CF} summarize the relationship between cospans, corelations, spans, relations, and their algebras in terms of props. This is done by presenting props in terms of signatures and equations, which act like generators and relations do for groups. 

\begin{definition} Define the category of \define{signatures} to be the functor category $\Set^{\N \times \N}$.
\end{definition}

A signature $\Sigma\maps \N \times \N \to \Set$ in $\Set^{\N \times \N}$ is thus a functor which picks out a set $\Sigma(m,n)$ for each pair $(m,n) \in \N\times \N$. The idea is that each set, $\Sigma(m,n)$, acts like a set of generating morphisms for a prop. Then for each pair, $(m,n)$, we get a set of generators from $m$ to $n$ for a prop. We summarize the precise framework necessary to generate props in this way, details of which were done by Baez and the author \cite{BC}.

\begin{proposition}\label{prop:signature}
There is a functor
\[          F \maps \Set^{\N \times \N} \to \PROP  \]
 which takes any signature $\Sigma$ to $F\Sigma$, the \define{free prop} on $\Sigma$, having morphisms corresponding to the pairs $(m,n)$ picked out by $\Sigma$ and those generated by these morphisms by composition and tensoring.
\end{proposition}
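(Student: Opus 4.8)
The plan is to exhibit $F$ as a left adjoint to the forgetful functor $U \maps \PROP \to \Set^{\N\times\N}$ that sends a prop $T$ to the signature $(m,n)\mapsto \hom_T(m,n)$. First I would build $F\Sigma$ directly as a symmetric strict monoidal category. Its objects are the natural numbers with $+$ as tensor product, exactly as required of a prop. For the morphisms, I would form the set of \emph{formal diagrams} from $m$ to $n$: these are the expressions generated from the elements of $\Sigma(p,q)$ (each regarded as an arrow $p \to q$), together with the identity $\id_1 \maps 1 \to 1$ and a symmetry $\sigma \maps 2 \to 2$, by repeatedly applying the two formal operations of composition $g \circ f$ (when the source of $g$ matches the target of $f$) and tensor product $f + g$ (with addition on objects). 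The hom-set $\hom_{F\Sigma}(m,n)$ is then the quotient of this set of formal diagrams by the smallest congruence, compatible with $\circ$ and $+$, imposing the axioms of a symmetric strict monoidal category: associativity and unitality of $\circ$, associativity and unitality of $+$, the interchange law $(g_1 + g_2)\circ(f_1+f_2) = (g_1\circ f_1)+(g_2\circ f_2)$, naturality of the symmetries, and the symmetry relation $\sigma \circ \sigma = \id_2$ together with the braid relation, which make the induced maps $\sigma_{p,q} \maps p+q \to q+p$ realize the standard symmetric-group action.

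Next I would check that this quotient is genuinely a prop and that the assignment is functorial. That $F\Sigma$ is a symmetric strict monoidal category with $\N$ as objects follows because the imposed relations are precisely the defining axioms; every required coherence equation holds by construction, since we quotiented by exactly those equations. For functoriality, a morphism of signatures $\alpha \maps \Sigma \to \Sigma'$ is a natural transformation, that is, a family of functions $\alpha_{m,n}\maps \Sigma(m,n)\to \Sigma'(m,n)$. I would define $F\alpha \maps F\Sigma \to F\Sigma'$ to be the identity on objects and to send the class of a formal diagram to the class obtained by replacing each generator $g \in \Sigma(p,q)$ appearing in it by $\alpha_{p,q}(g)$, leaving identities and symmetries fixed. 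Since this substitution commutes with $\circ$ and $+$ and preserves every generating relation, it descends to the quotient and yields a morphism of props; the equalities $F(\id_\Sigma)=\id_{F\Sigma}$ and $F(\beta\alpha)=F\beta \circ F\alpha$ are then immediate from the definition of substitution.

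Finally I would establish the universal property, which simultaneously identifies $F\Sigma$ as the free prop on $\Sigma$ and certifies that $F$ is left adjoint to $U$. The claim is a natural bijection $\hom_\PROP(F\Sigma, T) \cong \hom_{\Set^{\N\times\N}}(\Sigma, UT)$: a prop morphism out of $F\Sigma$ is determined by where it sends the generators, and any choice of images in $T$, that is, any signature morphism $\Sigma \to UT$, extends uniquely along $\circ$ and $+$ to all formal diagrams and respects the relations because $T$ itself satisfies the symmetric strict monoidal axioms. I expect the main obstacle to be the bookkeeping in this last step: to show that the extension is well defined on equivalence classes one must verify that each generating relation of the congruence is sent to a genuine equation in $T$, and that the extension is the unique prop morphism with the prescribed behavior on generators. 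This is the standard but somewhat tedious verification that the explicitly presented free symmetric monoidal category has the expected mapping-out property, and it is exactly the content carried out by Baez and the author \cite{BC}.
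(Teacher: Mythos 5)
Your construction is correct, but it is not the route the paper takes: the paper in fact offers no proof of Proposition \ref{prop:signature} at all, deferring entirely to the reference \cite{BC}, and there the existence of $F$ is obtained not by hand-building a term model but from general universal algebra --- props are exhibited as the algebras of an $(\N\times\N)$-sorted algebraic theory whose operations are composition, tensoring, identities and symmetries and whose equations are the prop axioms, so the left adjoint $F$ to the forgetful functor $U \maps \PROP \to \Set^{\N\times\N}$, and indeed monadicity of $U$, follow from standard results about multisorted theories. That abstract route is shorter and immediately yields Corollary \ref{cor:signature} (cocompleteness of $\PROP$ and the coequalizer presentation of an arbitrary prop), which your adjunction alone does not give without a further appeal to Beck's monadicity theorem. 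What your explicit construction buys is a concrete description of $\hom_{F\Sigma}(m,n)$ as formal diagrams modulo the prop congruence, which is exactly the description of $F\Sigma$ asserted in the statement, at the cost of the bookkeeping you flag (well-definedness of the extension on equivalence classes, and the verification that $\sigma$ together with naturality and the symmetric-group relations really generates all the symmetries $\sigma_{p,q}$ with their hexagon identities). One small repair is needed in your generating data: starting only from $\id_1$, $\sigma$, and the elements of $\Sigma$, no finite application of binary composition and tensoring ever produces a morphism $0 \to 0$, so $\hom_{F\Sigma}(0,0)$ would come out empty; you must either admit empty tensor products or throw in $\id_0$ as an additional generator so that the monoidal unit has its identity.
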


\begin{corollary}\label{cor:signature}
The category $\PROP$ is cocomplete, and any prop $\T$ is the coequalizer of some diagram:
\[
    \xymatrix{
      FE \ar@<-.5ex>[r]_{\rho} \ar@<.5ex>[r]^{\lambda} & F\Sigma.
    }
\]
\end{corollary}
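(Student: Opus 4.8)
The plan is to exhibit $\PROP$ as the category of algebras for a monad on the cocomplete category $\Set^{\N\times\N}$ and then read off both assertions from standard monad theory. First I would produce a right adjoint to the free-prop functor $F$ of Proposition \ref{prop:signature}. Define $U \maps \PROP \to \Set^{\N\times\N}$ by sending a prop $\T$ to its underlying signature of hom-sets, $U\T(m,n) = \hom_{\T}(m,n)$, and a morphism of props to its action on these sets. The universal property characterizing $F\Sigma$ --- that morphisms of props $F\Sigma \to \T$ correspond naturally to signature maps $\Sigma \to U\T$ --- is precisely the adjunction $F \dashv U$, with unit $\eta_\Sigma \maps \Sigma \to UF\Sigma$ picking out the generators and counit $\varepsilon_\T \maps FU\T \to \T$ evaluating each formal composite to its actual value in $\T$.

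For cocompleteness I would argue that the induced monad $T = UF$ is finitary: every morphism of $F\Sigma$ is assembled from finitely many generators by composition, tensoring, and symmetries, so $T$ preserves filtered colimits. Since $\Set^{\N\times\N}$ is a functor category into $\Set$, it is locally finitely presentable and in particular cocomplete; hence the category of algebras for the finitary monad $T$ is again locally finitely presentable, so it is cocomplete. By Beck's monadicity theorem $U$ is monadic --- it has a left adjoint, reflects isomorphisms, and creates coequalizers of $U$-split pairs --- giving $\PROP \simeq (\Set^{\N\times\N})^{T}$, through which colimits of props are computed.

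The coequalizer presentation is then the canonical (bar) presentation of a $T$-algebra by free algebras. Given $\T$, set $\Sigma := U\T$ and $E := UF\Sigma = UFU\T$, so that $FE = FUF\Sigma$ while $F\Sigma = FU\T$. I take
\[
\lambda := \varepsilon_{F\Sigma} \maps FUF\Sigma \to F\Sigma, \qquad \rho := F(U\varepsilon_{\T}) \maps FUFU\T \to FU\T .
\]
This is a reflexive pair, with common section $F\eta_\Sigma$, and its image under $U$ is exactly the $U$-split coequalizer diagram that presents the $T$-algebra $U\T$, whose coequalizer in $\Set^{\N\times\N}$ is $U\varepsilon_\T$ onto $U\T$. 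Because $U$ creates coequalizers of $U$-split pairs, this lifts to a coequalizer in $\PROP$ with coequalizing map $\varepsilon_\T \maps F\Sigma \to \T$; thus every prop $\T$ is the coequalizer of $FE \rightrightarrows F\Sigma$.

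The main obstacle is verifying monadicity, and concretely that $U$ creates coequalizers of $U$-split pairs while reflecting isomorphisms --- this is the step where the combinatorics of props (how formal composites are identified) actually enters, everything else being formal once $F \dashv U$ is in hand. For props of this kind it is routine, and indeed the adjunction $F \dashv U$ together with its monadicity is part of the framework developed in \cite{BC} that underlies Proposition \ref{prop:signature}, so I would cite that framework rather than reprove it from scratch.
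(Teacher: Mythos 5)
Your proof is correct and takes essentially the same approach the paper relies on: the paper gives no in-text argument for this corollary, deferring to the framework of \cite{BC}, which is precisely what you reconstruct --- the adjunction $F \dashv U$ with $U \maps \PROP \to \Set^{\N\times\N}$ monadic and finitary, cocompleteness via local presentability of the algebra category, and the canonical bar presentation $FUFU\T \rightrightarrows FU\T \to \T$ giving the coequalizer. The one step you defer (that $U$ creates coequalizers of $U$-split pairs) is exactly the step the paper also defers to that same reference, so the two arguments coincide.
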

 
We say that $(\Sigma,E)$ \define{presents} the prop $T$,  that $\Sigma$ is the set of \define{generators}, and that $E$ is the set of \define{equations}. A prop which is presented in this way is also precisely the same as the prop whose algebras correspond to the object described by signature and equations. In other words the algebras of a prop $T$ presented by $(\Sigma,E)$ are objects $(X,f_1,\ldots, f_n)$ obeying equations $(e_1,\ldots,e_m)$ where $f_i$ correspond to elements of the signature $\Sigma$ and $e_j$ correspond elements of the set of equations $E$.

 \begin{example}
Since the algebras of $\Fin\Corel$ are extraspecial commutative Frobenius monoids $\Fin\Corel$ is equivalent to the coequalizer of:

\[
    \xymatrix{
      FE \ar@<-.5ex>[r]_{\rho} \ar@<.5ex>[r]^{\lambda} & F\Sigma
    }
\]
with $\Sigma$ having generators $\mu\maps 2\to 1, \iota \maps 0\to 1, \delta \maps 1\to 2, \epsilon \maps 1\to 0$ and $E$ corresponding to the equations of an extraspecial commutative Frobenius monoid on morphisms built from these generators.

\end{example}

\begin{definition}
Let $\define{\Fin\Corel^{\circ}}$ be the symmetric monoidal subcategory of $\Fin\Corel$ whose objects are $0,2,4,...$ and whose morphisms are generated by composing and tensoring 
$\{m_2,i_2,\delta_2,\epsilon_2,\mu_2,\iota_2, d_2,e_2\}$ in $\Fin\Corel.$ 
\end{definition}

\noindent $\Fin\Corel^{\circ}$ is equivalent to a prop since the objects are generated by $2$.

\begin{conjecture}\label{con:presentation}
The prop $\Fin\Corel^{\circ}$ can be presented with $8$ generators corresponding to $\{m_2,i_2,\delta_2,\epsilon_2,\mu_2,\iota_2, d_2,e_2\}$ and equations corresponding to the equations shown in Theorem \ref{thm:series}, Theorem \ref{thm:parallel}, and Theorem \ref{thm:weakbimonoid}.
\end{conjecture}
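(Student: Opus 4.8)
The plan is to exhibit the conjectured presentation as a prop $P$ and construct a comparison morphism to $\Fin\Corel^{\circ}$ that one then argues is an isomorphism. Concretely, let $P = F\Sigma / E$ be the prop freely generated by the eight-element signature $\Sigma = \{m_2,i_2,\delta_2,\epsilon_2,\mu_2,\iota_2,d_2,e_2\}$ modulo the set $E$ of equations appearing in Theorems \ref{thm:series}, \ref{thm:parallel}, and \ref{thm:weakbimonoid}; such a prop exists by Proposition \ref{prop:signature} and Corollary \ref{cor:signature}. Sending each generator to the correspondingly named corelation in $\Fin\Corel$ defines a prop morphism $\Phi \maps P \to \Fin\Corel^{\circ}$, and this is well defined precisely because those three theorems verify that the chosen corelations satisfy every relation in $E$. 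By the definition of $\Fin\Corel^{\circ}$ as the subcategory generated under composition and tensoring by these eight corelations, $\Phi$ is automatically full, i.e.\ surjective on each hom-set. The entire content of the conjecture is therefore the \emph{faithfulness} of $\Phi$: no relations beyond those of $E$ are needed.

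To prove faithfulness I would establish a normal form theorem for morphisms of $P$. The two Frobenius monoids each individually generate, via Example \ref{ex:fincorel} and Lack \cite{La}, a prop whose morphisms are quotients of corelations with a known normal form as partitions; the task is to understand how a general word alternating between series and parallel generators collapses. The weak bimonoid laws of Theorem \ref{thm:weakbimonoid}, together with the two extra identities $m_2 \circ \delta_2 = \mu_2 \circ d_2$ and $(m_2\circ\delta_2)^2 = m_2\circ\delta_2$, are exactly the rewriting data that should let one push all series-type generators to one side of a composite and all parallel-type generators to the other, producing a canonical factorization $2m \to 2k \to 2n$ through an intermediate object, in the spirit of the factorization-system and distributive-law presentations of composite props studied by Lack \cite{La}. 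I would then check that $\Phi$ is injective on these normal forms, preferably by giving an explicit combinatorial description of exactly which partitions of the $2m+2n$ terminals are reachable from the generators, and verifying that distinct normal forms realise distinct partitions.

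The main obstacle is that the interaction here is a weak bimonoid rather than a bimonoid, so Lack's clean machinery for composing props via a distributive law $ST \Rightarrow TS$ does not apply off the shelf: a weak bimonoid does not supply a genuine distributive law satisfying the bialgebra axioms, and indeed the would-be exchange identity fails, since as the paper emphasises a $0$-junction followed by a $1$-junction is not the reverse composite. Consequently one cannot simply invoke ``morphisms of the composite prop are pairs''; instead the factorization and its uniqueness must be proved by hand, via a termination-plus-local-confluence (Newman's lemma) analysis of the rewriting system generated by $E$. Hardest of all is the combinatorial matching step: one must pin down the precise invariant cutting $\Fin\Corel^{\circ}(2m,2n)$ out of all partitions of $2m+2n$, and show that the number of normal forms agrees with the number of such partitions. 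Establishing that invariant, and hence confirming that $E$ is complete rather than merely sound, is exactly the gap that keeps the statement a conjecture, and is where I would expect to spend the bulk of the effort.
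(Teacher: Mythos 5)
You have not proved the statement, but neither does the paper: this is left as a genuine conjecture, and the paper offers no proof of it. Immediately after stating it, the paper only observes the easy direction — that any symmetric monoidal functor out of the conjectured presentation yields an object satisfying the required equations, ``however, the converse is once again much more difficult to show.'' Your setup captures this situation accurately: constructing $P = F\Sigma$ modulo $E$ via Proposition \ref{prop:signature} and Corollary \ref{cor:signature}, noting that Theorems \ref{thm:series}, \ref{thm:parallel}, and \ref{thm:weakbimonoid} make the comparison morphism $\Phi \maps P \to \Fin\Corel^{\circ}$ well defined (soundness), and noting that fullness is automatic because $\Fin\Corel^{\circ}$ is by definition generated by the eight corelations. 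All of that is correct and is essentially the content of the paper's remark following the conjecture.

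The gap is the one you yourself name: faithfulness of $\Phi$, i.e.\ completeness of $E$, is never established, and your normal-form strategy is a plan rather than an argument. Two concrete difficulties make it more than a routine exercise. First, several of the relations in $E$ resist orientation as rewrites — $m_2 \circ \delta_2 = \mu_2 \circ d_2$ equates two composites of the same shape, and the weak bimonoid law duplicates subterms — so termination of the induced rewriting system is not evident, and Newman's lemma cannot be applied until termination is settled. Second, and more fundamentally, soundness gives no control over whether $E$ is complete: it is entirely possible that $\Fin\Corel^{\circ}$ satisfies further relations not derivable from $E$, and ruling this out requires exactly the combinatorial invariant you defer — a characterization of which partitions of $2m + 2n$ are reachable from the generators, matched bijectively against normal forms. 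Since you do not produce the invariant, the termination proof, or the confluence check, your proposal establishes only what the paper already notes, and the statement remains open. To be clear, this is not a criticism of your approach — a factorization or rewriting analysis adapted to the weak-bimonoid setting (where, as you correctly observe, Lack's distributive-law machinery for composing props does not apply) is a reasonable route — but as written it is a research program, not a proof.
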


By definition, given a symmetric monoidal functor $F\maps \Fin\Corel^{\circ} \to \C$, the object $F(2)$ together with the associated morphisms obey the required equations. However, the converse is once again much more difficult to show.

We shall see that this prop is useful for transforming typical bond graph diagrams into circuit diagrams. This is due to the fact that when restricting only to the multiplication and comultiplication morphisms, together with the allowable ways of combining bond graphs, all of the desired properties are obeyed. It is only when we throw in the other morphisms and the other ways of composing that there are issues.

Another consequence of Corollary \ref{cor:signature} is that morphisms of props can be defined on generators as long the the equations hold in the image. That is-- to get a functor $F\maps T\to T'$ for props presented  by $(E,\Sigma)$ and $(E',\Sigma'$) we need only define $F(\Sigma)$ and check that the equations from $E$ hold in $T'$ between the morphisms built from those in $F(\Sigma)$.

Similarly, since an algebra of a prop $T$ presented by $(\Sigma,E)$ corresponds to some type of object, if we have an example of such an object in symmetric monoidal category, then this also defines a functor. In other words, given an object $c\in C$ equipped with morphisms obeying equations corresponding to those in $E$, then there is a unique functor $F\maps T\to C$ such that $F(1) = c$. 

The props we have looked at and the props to come are also dagger compact categories. Thus we need to define morphisms of props which also preserve the dagger structure.

\begin{definition}
A \define{symmetric monoidal dagger functor} is a symmetric monoidal functor $F\maps \C \to \D$ between dagger compact categories such that $F((-)^{\dagger}) = (F(-))^{\dagger}$, $\mu_{x,y}\maps F(x) \otimes F(y) \to F(x\otimes y)$ is unitary for all $x,y\in \C$, and $\eta \maps I \to F(I)$ is unitary. A \define{dagger morphism of props} is a morphism of props $F$ between props such that  $F((-)^{\dagger}) = (F(-))^{\dagger}$.
\end{definition}


\section{Lagrangian Relations}
\label{sec:Lagrangian}

Any circuit can be associated with a space of possible potential and current pairs, which is then called the ``behavior" of the circuit. When we say that a circuit is determined by the underlying corelation we mean that the corelation completely determines the behavior. Similarly, any bond graph can be associated with a space of possible effort and potential pairs, which is called the behavior of the bond graph. These are ``Lagrangian subspaces" of a symplectic vector space.

\begin{definition}
A \define{symplectic vector space} $V$ over a field $k$ is a finite-dimensional vector
space equipped with a \define{symplectic structure} $\omega$, meaning a map $\omega \maps V \times  V \to k$ that is:
\begin{itemize}
\item bilinear,
\item alternating: $\omega(v,v) = 0$ for all $v \in V$,
\item nondegenerate: if $\omega(u,v) = 0 $ for all $u \in V$ then $v = 0$.
\end{itemize}
\end{definition}

\noindent
There is a standard way to make $k\oplus k$ into a symplectic vector space, namely:
\[   \omega((u, v), (u',v')) =  u' v -u v'. \]
Given two symplectic vector spaces $(V_1,\omega_1)$ and $(V_2, \omega_2)$,
we give their direct sum the symplectic structure: 
\[   (\omega_1 \oplus \omega_2)((u_1,u_2), (v_1, v_2)) = \omega_1(u_1, v_1) + 
\omega_2(u_2, v_2)  .\]

\begin{definition}
For any subspace $W \subseteq V$, its \define{$\omega$-orthogonal subspace}  is $W^{\perp} = \{v\in V| \forall u \in W \; \omega(v,u) = 0 \}$. 
\end{definition}

This subspace has the property that $(W^{\perp})^{\perp} = W$ and $\dim(W) + \dim(W^{\perp}) = \dim(V)$. It is different from the usual orthogonal space in that $W\cap W^{\perp}$ need not be trivial.

\begin{definition}

 We say that a subspace $W$ is:

\begin{itemize}
\item \define{isotropic} if $W\subseteq W^{\perp}$,

\item \define{coisotropic} if $W^{\perp} \subseteq W$, 

\item \define{Lagrangian} if $W=W^{\perp}$. 

\end{itemize}
\end{definition}

\begin{definition}
Given a symplectic structure $\omega$ on a vector space $V$, we define its \define{conjugate} to be the symplectic structure $\overline\omega = -\omega$, and write the conjugate symplectic vector space $(V,\overline\omega)$ as
$\overline V.$
\end{definition}

Recall that relations $R\subseteq X\times Y$ are generalizations of functions $f\maps X\to Y$. Extending this idea we can think of Lagrangian subspaces as a type of arrow. 

\begin{definition}
A \define{Lagrangian relation} $L\maps V_1 \to V_2$ is a Lagrangian subspace $L \subseteq \overline{V_1} \oplus V_2$.
\end{definition}

Composition of Lagrangian relations is done by composing the underlying relations.  We need to take the conjugate of the first space for the composition of Lagrangian relations to be Lagrangian. This fact is well-known, though the proof is nontrivial \cite{BF}. This is enough to define a category.

\begin{definition}
Let $\define{\Lag\Rel_k}$ be the category with symplectic vector spaces $(k^{2n},\omega)$ for $n\in \mathbb{N}$ as objects and Lagrangian relations $L\maps k^{2n} \to k^{2m}$ as morphisms. 
\end{definition}

\begin{remark}
 $\Lag\Rel_k$ is also dagger compact and is equivalent to a prop by Proposition\ \ref{prop:strictification_1} with generating object $k\oplus k$. From now on we use $\Lag\Rel_k$ to mean the equivalent prop. If $L\maps V_1\to V_2$ then $f^\dagger\maps V_2\to V_1$ is the same relation with the inputs and outputs exchanged.
\end{remark}

Baez and Erbele \cite{Be} showed that $k$ can be equipped with two different extraspecial commutative Frobenius monoid structures in the category $\Fin\Rel_k$, which has finite dimensional vector spaces as objects and linear relations as morphisms. We call these the \define{duplicative Frobenius structure} on $k$ and the \define{additive Frobenius structure} on $k$.  The duplicative Frobenius structure is $k$ equipped with these linear relations:
\begin{itemize}
\item 
as comultiplication, the linear map called \define{duplication}:
\[   \begin{array}{cccl}
\Delta \maps & k &\to & k \oplus k \\
                   & \phi &\mapsto & (\phi, \phi)   
\end{array}
\]
\item 
as counit, the linear map called \define{deletion}:
\[   \begin{array}{cccl}
! \maps & k &\to & \{0\} \\
                   & \phi &\mapsto & 0   
\end{array}
\]
\item
as multiplication, the linear relation called \define{coduplication}:
\[    \begin{array}{cccl}
\Delta^\dagger \maps & k &\asrelto& k \oplus k \\
        &  \Delta^\dagger &=& \{(I, I, I) : \;  I \in k \} \subseteq k \oplus (k \oplus k)   
\end{array}
\]
\item 
as unit, the linear relation called \define{codeletion}:
\[    \begin{array}{cccl}
!^\dagger \maps & k &\asrelto& \{0\} \\
          &!^\dagger &=& \{(\phi,0)\} \subseteq k \oplus \{0\} 
\end{array}
\]
\end{itemize}

 \noindent The additive Frobenius structure is $k$ equipped with these linear relations:
\begin{itemize}
\item as multiplication, the linear map called \define{addition}:

\[    \begin{array}{cccl}
+\maps & k\oplus k &\to& k \\
            & (I_1,I_2) &\mapsto& I_1 + I_2 
\end{array}
\]
\item as unit, the linear map called \define{zero}:
\[    \begin{array}{cccl}
0\maps & \{0\} &\to& k \\
            & 0  &\mapsto & 0 
\end{array}
\]
\item as comultiplication, the linear relation called \define{coaddition}:
\[    \begin{array}{cccl}
+^\dagger \maps & k &\asrelto& k \oplus k \\
        &  +^\dagger &=& \{(I_1+I_2, I_1, I_2) : \; I_1, I_2 \in k \} \subseteq k \oplus (k \oplus k)   
\end{array}
\]
\item as counit, the linear relation called \define{cozero}:
\[    \begin{array}{cccl}
0^\dagger \maps & k &\asrelto& \{0\} \\
          &0^\dagger &=& \{(0,0)\} \subseteq k \oplus \{0\} 
\end{array}
\]
\end{itemize}

\noindent By tensoring the two extraspecial commutative Frobenius monoids in $\Fin\Rel_k$: $$(k,\Delta^\dagger, !^\dagger, \Delta, !)$$ and  $$(k,+,0,+^\dagger,0^\dagger)$$ we construct one in $\Lag\Rel_k$:
 $$(k\oplus k,\Delta^{\dagger} \oplus +, !^{\dagger} \oplus 0, \Delta \oplus +^{\dagger}, ! \oplus 0^{\dagger}).$$

 \noindent Baez and the author \cite{BC} proved there is a unique morphism of props
 $$K \maps \Fin\Corel \to \Lag\Rel_k$$ defined by $K(1) = k\oplus k$ such that $K(m) = \Delta^{\dagger} \oplus +$, $K(i) = !^{\dagger} \oplus 0$, $K(d) = \Delta \oplus +^{\dagger}$, and $K(e) =  ! \oplus 0^{\dagger}$. In other words the Frobenius monoid $(1,m,i,d,e)$ is sent to the Frobenius monoid $(k\oplus k,\Delta^{\dagger} \oplus +, !^{\dagger} \oplus 0, \Delta \oplus +^{\dagger}, ! \oplus 0^{\dagger})$. Further, $K$ is a dagger morphism of props because the dagger structure for both categories is the act of formally exchanging inputs and outputs. More explicitly, the functor does the following on generators of $\Fin\Corel$:

\[
\begin{array}{lcl}
  K(m) &=& \{(\phi_1,I_1,\phi_2,I_2,\phi_3,I_3) :\;  \phi_1= \phi_2 = \phi_3, \; I_1+ I_2=I_3\}  \\
&=&  \Delta^{\dagger} \oplus + \maps k^4\to k^2 \\ \\
  K(i) &=& \{(\phi_2, I_2) : \; I_2 = 0\}   \\
&=&  !^{\dagger} \oplus 0 \maps \{0\} \to k^2 \\ \\
  K(d) &=& \{(\phi_1,I_1,\phi_2,I_2,\phi_3,I_3) :\; \phi_1= \phi_2 = \phi_3, \;  I_1= I_2 + I_3\} \\
&=&   \Delta \oplus +^{\dagger} \maps k^2 \to k^4 \\ \\
  K(e) &=& \{(\phi_1, I_1) :\; I_1= 0\} \\
&=&  ! \oplus 0^{\dagger} \maps k^2 \to \{0\}
\end{array}
\]
This says that the potential on connected wires in a circuit must be equal, while the sum of the input current is equal to the sum of the output current for connected wires. Given a circuit in $\Circ$ and the composite $$KH'H \maps \Circ \to \Lag\Rel_k$$ we get the behavior of the circuit, which is completely determined by the underlying corelation of the circuit. In Section \ref{sec:functors} we shall see how this functor acts on $\Fin\Corel^{\circ}$, but for now we continue to study $\Lag\Rel_k$ in order to define another category with morphisms acting like bond graphs.

 A bond graph has an associated Lagrangian subspace consisting of effort and flow pairs. The junctions impose equations that determine Lagrangian subspaces, which can all be written in terms of the additive and duplicative structures. 

The junction:
\vspace{-2ex}
\begin{figure}[H] 
	\centering
\begin{tikzpicture}[circuit ee IEC, set resistor graphic=var resistor IEC
      graphic, scale=0.8, every node/.append style={transform shape}]
[
	node distance=1.5cm,
	mynewelement/.style={
		color=blue!50!black!75,
		thick
	},
	mybondmodpoint/.style={
	rectangle,
	minimum size=3mm,
	very thick,
	draw=red!50!black!50, 
	outer sep=2pt
	}
]		
	\node (J11) {0};
	\node (R2) [ below left of=J11] {}
	edge[line width=3.5pt]    node [below]{} (J11)
        edge [line width=3.5pt]   node [above]{} (J11);
	\node (R1) [ above left of=J11] {}
	edge [line width=3.5pt]   node [below]{} (J11)
        edge [line width=3.5pt]   node [above]{} (J11);
	\node (C1) [right of=J11] {}
	edge [line width=3.5pt]   node [right]{} (J11)
        edge  [line width=3.5pt]  node [left]{} (J11);
      \end{tikzpicture} 
\end{figure}
\vspace{-1ex}
\noindent is associated with the Lagrangian subspace: $$ \Delta^{\dagger} \oplus + = \{(E_1,\ldots, F_3): \;  E_1 = E_2 = E_3, F_1 + F_2 = F_3,\} \maps k^4\to k^2 $$

\noindent The junction:
\vspace{-2ex}
\begin{figure}[H] 
	\centering
\begin{tikzpicture}[circuit ee IEC, set resistor graphic=var resistor IEC
      graphic, scale=0.8, every node/.append style={transform shape}]
[
	node distance=1.5cm,
	mynewelement/.style={
		color=blue!50!black!75,
		thick
	},
	mybondmodpoint/.style={
	rectangle,
	minimum size=3mm,
	very thick,
	draw=red!50!black!50, 
	outer sep=2pt
	}
]		
	\node (J11) {0};
	\node (R2) [ below right of=J11] {}
	edge [line width=3.5pt]   node [below]{} (J11)
        edge [line width=3.5pt]   node [above]{} (J11);
	\node (R1) [ above right of=J11] {}
	edge  [line width=3.5pt]  node [below]{} (J11)
        edge [line width=3.5pt]   node [above]{} (J11);
	\node (C1) [left of=J11] {}
	edge[line width=3.5pt]    node [above]{} (J11)
        edge  [line width=3.5pt]  node [below]{} (J11);
      \end{tikzpicture} 
\end{figure}

\vspace{-3ex}

\noindent is associated with the Lagrangian subspace: $$ \Delta \oplus +^{\dagger}= \{(E_1,\ldots, F_3): \;  E_1 = E_2 = E_3,  F_1 = F_2 + F_3\} \maps k^2\to k^4 $$

\noindent The junction:
\vspace{-2ex}
\begin{figure}[H] 
	\centering
\begin{tikzpicture}[circuit ee IEC, set resistor graphic=var resistor IEC
      graphic, scale=0.8, every node/.append style={transform shape}]
[
	node distance=1.5cm,
	mynewelement/.style={
		color=blue!50!black!75,
		thick
	},
	mybondmodpoint/.style={
	rectangle,
	minimum size=3mm,
	very thick,
	draw=red!50!black!50, 
	outer sep=2pt
	}
]		
	\node (J11) {1};
	\node (R2) [ below left of=J11] {}
	edge [line width=3.5pt]   node [below]{} (J11)
        edge [line width=3.5pt]   node [above]{} (J11);
	\node (R1) [ above left of=J11] {}
	edge [line width=3.5pt]   node [below]{} (J11)
        edge [line width=3.5pt]  node [above]{} (J11);
	\node (C1) [right of=J11] {}
	edge [line width=3.5pt]   node [right]{} (J11)
        edge [line width=3.5pt]   node [left]{} (J11);
      \end{tikzpicture} 
\end{figure}

\vspace{-3ex}

\noindent is associated with the Lagrangian subspace: $$ + \oplus \Delta^{\dagger}=\{(E_1,\ldots, F_3): \; E_1 + E_2 = E_3, F_1 = F_2 = F_3\} \maps k^4\to k^2 $$

\noindent The junction:
\vspace{-2ex}
\begin{figure}[H] 
	\centering
\begin{tikzpicture}[circuit ee IEC, set resistor graphic=var resistor IEC
      graphic, scale=0.8, every node/.append style={transform shape}]
[
	node distance=1.5cm,
	mynewelement/.style={
		color=blue!50!black!75,
		thick
	},
	mybondmodpoint/.style={
	rectangle,
	minimum size=3mm,
	very thick,
	draw=red!50!black!50, 
	outer sep=2pt
	}
]		
	\node (J11) {1};
	\node (R2) [ below right of=J11] {}
	edge[line width=3.5pt]    node [below]{} (J11)
        edge  [line width=3.5pt]  node [above]{} (J11);
	\node (R1) [ above right of=J11] {}
	edge [line width=3.5pt]   node [below]{} (J11)
        edge[line width=3.5pt]    node [above]{} (J11);
	\node (C1) [left of=J11] {}
	edge [line width=3.5pt]   node [above]{} (J11)
        edge [line width=3.5pt]   node [below]{} (J11);
      \end{tikzpicture} 
\end{figure}

\vspace{-3ex}

\noindent is associated with the Lagrangian subspace: $$+^{\dagger} \oplus \Delta=\{(E_1,\ldots, F_3): \; E_1 = E_2 + E_3, F_1 = F_2 = F_3\} \maps k^2\to k^4 $$

 Now notice that $(k\oplus k,\Delta^{\dagger} \oplus +, !^{\dagger} \oplus 0,  \Delta \oplus +^{\dagger}, ! \oplus 0^{\dagger})$ is an extraspecial commutative Frobenius monoid and its multiplication and comultiplication correspond to the two types of $0$-junctions. This hints at a way to define unary $0$-junctions corresponding to the unit and counit. 

We associate the unit: $$!^{\dagger} \oplus 0 = \{(E, F) : \; F= 0\} \maps \{0\} \to k^2 $$
\noindent to the bond graph:
\vspace{-2ex}
\begin{figure}[H] 
	\centering
\begin{tikzpicture}[circuit ee IEC, set resistor graphic=var resistor IEC
      graphic, scale=0.8, every node/.append style={transform shape}]
[
	node distance=1.5cm,
	mynewelement/.style={
		color=blue!50!black!75,
		thick
	},
	mybondmodpoint/.style={
	rectangle,
	minimum size=3mm,
	very thick,
	draw=red!50!black!50, 
	outer sep=2pt
	}
]		
	\node (J11) {0};
	\node (R1) [right of=J11] {}
	edge  [line width=3.5pt]  node [below]{} (J11)
        edge  [line width=3.5pt]  node [above]{} (J11);
\end{tikzpicture}
\end{figure}

\vspace{-1ex}

\noindent The counit: $$! \oplus 0^{\dagger}= \{(E, F) : \; F= 0\} \maps k^2\to \{0\} $$
\noindent is associated with:
\vspace{-2ex}
\begin{figure}[H] 
	\centering
\begin{tikzpicture}[circuit ee IEC, set resistor graphic=var resistor IEC
      graphic, scale=0.8, every node/.append style={transform shape}]
[
	node distance=1.5cm,
	mynewelement/.style={
		color=blue!50!black!75,
		thick
	},
	mybondmodpoint/.style={
	rectangle,
	minimum size=3mm,
	very thick,
	draw=red!50!black!50, 
	outer sep=2pt
	}
]		
	\node (J11) {0};
	\node (R2) [left of=J11] {}
	edge [line width=3.5pt]  node [below]{} (J11)
        edge [line width=3.5pt]   node [above]{} (J11);
\end{tikzpicture}
\end{figure}

\vspace{-1ex}

 By tensoring we construct another extraspecial commutative Frobenius monoid $(k\oplus k,+ \oplus \Delta^{\dagger},  0 \oplus !^{\dagger}, +^{\dagger} \oplus \Delta,  0^{\dagger} \oplus !)$. We use this one to define unary $1$-junctions.

The unit: $$0 \; \oplus \; !^{\dagger} = \{(E, F) : \; E= 0\} \maps \{0\} \to k^2 $$
\noindent is drawn as:
\vspace{-2ex}
\begin{figure}[H] 
	\centering
\begin{tikzpicture}[circuit ee IEC, set resistor graphic=var resistor IEC
      graphic, scale=0.8, every node/.append style={transform shape}]
[
	node distance=1.5cm,
	mynewelement/.style={
		color=blue!50!black!75,
		thick
	},
	mybondmodpoint/.style={
	rectangle,
	minimum size=3mm,
	very thick,
	draw=red!50!black!50, 
	outer sep=2pt
	}
]		
	\node (J11) {1};
	\node (R1) [right of=J11] {}
	edge [line width=3.5pt]   node [below]{} (J11)
        edge [line width=3.5pt]   node [above]{} (J11);
\end{tikzpicture}
\end{figure}

\vspace{-1ex}

\noindent The counit: $$0^{\dagger} \; \oplus \; ! = \{(E, F) : \; E= 0\} \maps k^2\to \{0\} $$
\noindent is drawn as:
\vspace{-2ex}
\begin{figure}[H] 
	\centering
\begin{tikzpicture}[circuit ee IEC, set resistor graphic=var resistor IEC
      graphic, scale=0.8, every node/.append style={transform shape}]
[
	node distance=1.5cm,
	mynewelement/.style={
		color=blue!50!black!75,
		thick
	},
	mybondmodpoint/.style={
	rectangle,
	minimum size=3mm,
	very thick,
	draw=red!50!black!50, 
	outer sep=2pt
	}
]		
	\node (J11) {1};
	\node (R2) [left of=J11] {}
	edge [line width=3.5pt]  node [below]{} (J11)
        edge [line width=3.5pt]    node [above]{} (J11);
\end{tikzpicture}
\end{figure}

\vspace{-1ex}

Since these Frobenius monoids are built using the equations that define $0$- and $1$-junctions it is no surprise that they correspond to these junctions. However, when the morphisms from the two Frobenius monoids interact something unexepcted occurs. They obey relations which are too strong. 

\begin{theorem} 
\label{thm:lagrel}
We have two extraspecial commutative Frobenius monoids
\begin{itemize}
 \item $(k\oplus k,\Delta^{\dagger} \oplus +, !^{\dagger} \oplus 0, \Delta \oplus +^{\dagger}, ! \oplus 0^{\dagger})$ 
 \item $(k\oplus k,+ \oplus \Delta^{\dagger}, 0 \oplus !^{\dagger}, +^{\dagger} \oplus \Delta,  0^{\dagger} \oplus !)$ 
\end{itemize}
\noindent which come together as two bimonoids
\begin{itemize}
\item $(k\oplus k,\Delta^{\dagger} \oplus +, !^{\dagger} \oplus 0, +^{\dagger} \oplus \Delta, 0^{\dagger} \oplus !)$
\item $(k\oplus k,+ \oplus \Delta^{\dagger}, 0 \oplus !^{\dagger}, \Delta \oplus +^{\dagger}, ! \oplus 0^{\dagger}).$
\end{itemize}
\noindent Additionally: $$(\Delta^{\dagger} \oplus +)\circ (+^{\dagger} \oplus \Delta) = ((+ \oplus \Delta^{\dagger})\circ (\Delta \oplus +^{\dagger}))^{-1}$$
\end{theorem}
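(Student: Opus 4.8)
The plan is to prove this identity by direct computation, exploiting the explicit coordinate descriptions of the four junction relations recorded above. Since $\Lag\Rel_k$ is a category of relations, composition is ordinary relational composition: to compose two Lagrangian relations one identifies the shared middle space, conjoins the defining equations, and eliminates the intermediate variables. First I would fix notation, writing a general element of a port $k\oplus k$ as a pair $(E,F)$, and recall that the symbol $(-)^{-1}$ denotes the relational inverse — the same underlying subspace with its input and output factors formally exchanged — which in this dagger compact category is exactly the dagger $(-)^{\dagger}$ and is defined for every relation regardless of whether it happens to be the graph of a bijection.

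Next I would compute the left-hand side $(\Delta^{\dagger}\oplus +)\circ(+^{\dagger}\oplus\Delta)$. Feeding an input port $(E_1,F_1)$ into $+^{\dagger}\oplus\Delta\maps k^2\to k^4$ produces two middle ports $(E_2,F_2)$ and $(E_3,F_3)$ subject to $E_1 = E_2 + E_3$ and $F_1 = F_2 = F_3$; feeding these into $\Delta^{\dagger}\oplus +\maps k^4\to k^2$ with output $(E_4,F_4)$ imposes $E_2 = E_3 = E_4$ and $F_2 + F_3 = F_4$. Eliminating the middle ports collapses these to
\[
(\Delta^{\dagger}\oplus +)\circ(+^{\dagger}\oplus\Delta) = \{\,((E_1,F_1),(E_4,F_4)) : E_1 = 2E_4,\; F_4 = 2F_1\,\}.
\]
The analogous computation for the right-hand composite $(+\oplus\Delta^{\dagger})\circ(\Delta\oplus+^{\dagger})$, with input $(E_1,F_1)$, middle ports supplied by $\Delta\oplus+^{\dagger}$, and output $(E_4,F_4)$ supplied by $+\oplus\Delta^{\dagger}$, yields
\[
(+\oplus\Delta^{\dagger})\circ(\Delta\oplus+^{\dagger}) = \{\,((E_1,F_1),(E_4,F_4)) : E_4 = 2E_1,\; F_1 = 2F_4\,\}.
\]

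Finally I would take the relational inverse of the second subspace by exchanging its input and output factors, which sends the constraints $E_4 = 2E_1,\ F_1 = 2F_4$ to the same pair of equations read from output to input; relabelling shows this is precisely the first subspace, giving the claimed equality. The computation is elementary, so the only genuine subtleties are bookkeeping: correctly matching the two middle ports when forming each composite, and reading off the inverse as the opposite relation rather than as a functional inverse. This last point is what makes the statement robust — the factor of $2$ appearing in each composite means neither composite is the identity (contrast the weak-bimonoid identity $m_2\circ\delta_2 = \mu_2\circ d_2$ in $\Fin\Corel^{\circ}$), yet the two composites remain mutually inverse relations over any field $k$, including characteristic $2$, where both subspaces degenerate but the equality persists.
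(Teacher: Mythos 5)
Your coordinate computations of the two composites are correct ($E_1 = 2E_4,\ F_4 = 2F_1$ for the first, $E_4 = 2E_1,\ F_1 = 2F_4$ for the second, and each is indeed the converse relation of the other), but this establishes only the final displayed identity, which is the smallest part of the theorem. The statement also asserts that $(k\oplus k,\Delta^{\dagger}\oplus +,\ !^{\dagger}\oplus 0,\ \Delta\oplus +^{\dagger},\ !\oplus 0^{\dagger})$ and $(k\oplus k,\ +\oplus\Delta^{\dagger},\ 0\,\oplus\, !^{\dagger},\ +^{\dagger}\oplus\Delta,\ 0^{\dagger}\oplus\, !)$ are extraspecial commutative Frobenius monoids, and that the two mixed structures are bimonoids. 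Your proposal says nothing about any of these axioms (associativity, unit, Frobenius, special, extra, commutativity for each structure, plus the bialgebra laws for the mixed pairs). The paper disposes of all of this by citing the relations established by Bonchi, Soboci\'nski and Zanasi for the additive and duplicative structures on $k$ in $\Fin\Rel_k$, the structures on $k\oplus k$ being obtained from those by tensoring; a complete proof along your lines must either verify these axioms directly (a long but routine list of coordinate computations of the same kind you performed) or invoke that citation. As written, most of the theorem is simply unproved.

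There is also a genuine problem with your reading of $(-)^{-1}$. You declare it to be the relational converse, i.e.\ the dagger, ``defined for every relation.'' But the paper explicitly distinguishes dagger from inverse (a morphism is \emph{unitary} when the two agree), and the proof of Proposition \ref{prop:functors} uses precisely this theorem to conclude $(+\oplus\Delta^{\dagger})\circ(\Delta\oplus+^{\dagger})\circ(\Delta^{\dagger}\oplus+)\circ(+^{\dagger}\oplus\Delta)=\mathrm{id}_2$, which is the categorical-inverse statement. Your computation does yield that statement, but only after a step you omit: when $\mathrm{char}\,k\neq 2$ each composite is the graph of an invertible linear map, namely $(E,F)\mapsto(E/2,2F)$ and $(E,F)\mapsto(2E,F/2)$, and for graphs of bijections the converse coincides with the graph of the inverse. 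Make that explicit. Your closing claim that the result ``persists'' in characteristic $2$ is then false under the paper's reading: there the composites degenerate to the non-invertible relations $\{(E,F,E',F'):E=0,\ F'=0\}$ and $\{(E,F,E',F'):F=0,\ E'=0\}$, whose composite (in either order) constrains both flows (or both efforts) to vanish and is not the identity, so the two morphisms are converse to one another but not mutually inverse. The converse reading is a strictly weaker statement, not a harmless generalization.
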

\begin{proof}
These facts follow from relations shown by Bonchi, Soboci\'nski, and Zanasi \cite{Be}.
\end{proof}

In a bimonoid there are laws governing both the interaction of the unit and comulitiplication and  the interaction of the counit and multiplication.  These two laws do not describe the way that a unary $1$-junction would interact with a $0$-junction or vice-versa. Regardless, we define another category using the above ideas.

\begin{definition}
Let $\Lag\Rel_k^{\circ}$ be the subcategory of $\Lag\Rel_k$ generated by the $8$ morphisms:
$$(\Delta^{\dagger} \oplus +, !^{\dagger} \oplus 0, \Delta \oplus +^{\dagger},  ! \oplus 0^{\dagger}, + \oplus \Delta^{\dagger},  0 \oplus !^{\dagger}, +^{\dagger} \oplus \Delta,  0^{\dagger} \oplus !)$$ Note that $\Lag\Rel_k^{\circ}$ is equivalent to a prop.  Henceforth we refer to this prop as $LagRel^\circ_k$.
\end{definition}

\begin{conjecture}
We conjecture that $\Lag\Rel_k^{\circ}$ is presented by generators and equations corresponding to Theorem \ref{thm:lagrel}.
\end{conjecture}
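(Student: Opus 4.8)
The plan is to prove the conjecture in two stages: soundness, which is immediate, and completeness, which carries all the content. Write $P$ for the prop presented by the eight generators $\{\Delta^{\dagger} \oplus +, !^{\dagger} \oplus 0, \Delta \oplus +^{\dagger}, ! \oplus 0^{\dagger}, + \oplus \Delta^{\dagger}, 0 \oplus !^{\dagger}, +^{\dagger} \oplus \Delta, 0^{\dagger} \oplus !\}$ together with the equations listed in Theorem \ref{thm:lagrel} (the two extraspecial commutative Frobenius monoid structures, the two bimonoid structures, and the relation $(\Delta^{\dagger} \oplus +)\circ (+^{\dagger} \oplus \Delta) = ((+ \oplus \Delta^{\dagger})\circ (\Delta \oplus +^{\dagger}))^{-1}$). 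By Corollary \ref{cor:signature} and the remarks following it, sending each generator to the corresponding Lagrangian relation defines a morphism of props $\Phi \maps P \to \Lag\Rel_k$, precisely because Theorem \ref{thm:lagrel} guarantees all those equations hold in $\Lag\Rel_k$; since $\Lag\Rel_k^{\circ}$ is by definition the subprop generated by these eight morphisms, $\Phi \maps P \to \Lag\Rel_k^{\circ}$ is identity on objects and full. Completeness is the remaining assertion that $\Phi$ is faithful, i.e. injective on hom-sets.

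To attack faithfulness I would first describe $\Lag\Rel_k^{\circ}$ concretely. Each object $n$ carries coordinates $(E_1,F_1,\dots,E_n,F_n)$, and every generator relates the effort coordinates by one of addition, duplication, zero, or delete (or a transpose) while relating the flow coordinates by the symplectically dual operation. By induction on the structure of a string diagram I would show that every morphism of $\Lag\Rel_k^{\circ}$ from $n$ to $m$ has the form $L_R = \{(E,F,E',F') : (E,E') \in R,\ (F,F') \in R^{\star}\}$ for a linear relation $R \subseteq k^n \times k^m$ on the efforts, where $R^{\star}$ denotes the linear relation on flows forced by the requirement that $L_R$ be Lagrangian. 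The point is that $L_R$ determines and is determined by $R$, and that composition and tensoring of Lagrangian relations restrict to those of linear relations, so $R \mapsto L_R$ is an isomorphism of props from the scalar-free subprop $\mathrm{LinRel}^{\flat}_k \subseteq \mathrm{LinRel}_k$ generated by addition and copying (with their units, counits, and transposes, but no nontrivial scalar multiplication) onto $\Lag\Rel_k^{\circ}$.

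The payoff is that faithfulness of $\Phi$ becomes a statement purely about $\mathrm{LinRel}^{\flat}_k$, for which the presentation results of Bonchi, Soboci\'nski, and Zanasi \cite{Be} apply: their axiomatization presents the prop of linear relations by a rewriting argument that reduces every diagram to a span--cospan (matrix factorization) normal form, and the scalar-free fragment cuts this down to $\mathrm{LinRel}^{\flat}_k$. I would therefore finish by matching, under $R \mapsto L_R$, the equations of Theorem \ref{thm:lagrel} against that axiomatization: the two Frobenius structures supply the cups and caps making each bimonoid self-dual, the two bimonoid laws are the two Hopf/bialgebra interactions, and the invertibility relation $(\Delta^{\dagger} \oplus +)\circ (+^{\dagger} \oplus \Delta) = ((+ \oplus \Delta^{\dagger})\circ (\Delta \oplus +^{\dagger}))^{-1}$ plays the role of the antipode (positive--negative cancellation) axiom. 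The main obstacle is exactly here: one must derive every remaining axiom of \cite{Be} from this comparatively short list and confirm that no scalar generator is secretly required to close the normal-form argument. This is precisely why the statement is posed as a conjecture, so the honest outcome of the plan is either a complete derivation or an explicit list of the further equations that must be adjoined to make the presentation hold.
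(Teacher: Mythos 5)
Since the statement is posed in the paper as a conjecture, there is no proof of it in the paper to compare against; the only question is whether your proposal settles it, and by your own admission it does not. That said, parts of your plan are sound and go beyond anything the paper records. Soundness via the universal property of presented props (Corollary \ref{cor:signature}) is correct and routine. Your structural reduction is also essentially right: each of the eight generators splits as (effort relation) $\oplus$ (flow relation), direct sums compose componentwise, and a subspace of split form $R \oplus S \subseteq \overline{V_1} \oplus V_2$ is Lagrangian precisely when $S$ is the dual of $R$ under the effort--flow pairing, so $S$ is determined by $R$; functoriality of $R \mapsto L_R$ is exactly the nontrivial fact, already cited in the paper, that composites of Lagrangian relations are Lagrangian. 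This identifies $\Lag\Rel_k^{\circ}$ with a prop of linear relations definable over the prime subfield of $k$, which is a genuinely useful reformulation of the conjecture.

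The gap is the completeness step, and it is larger than your closing caveat suggests, for two concrete reasons. First, the sub-presentation problem: even granting that the interacting-Hopf axiomatization of \cite{BSZ} presents the full prop of linear relations (over the field of fractions of a PID), it does not follow that the subprop generated by the scalar-free generators is presented by any subset of those axioms. An equation between scalar-free diagrams might be derivable only through intermediate diagrams involving scalar generators, and the normal-form argument of \cite{BSZ} passes through span/cospan (matrix) forms whose entries are arbitrary scalars; one would have to re-run the entire rewriting argument inside the fragment, not merely ``confirm'' it closes. Second, the antipode: in characteristic $0$ the correct target is linear relations over $\mathbb{Q}$, whose known presentation includes $-1$ as a generator subject to its own involution, bialgebra-endomorphism, and Hopf-law axioms. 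Under your dictionary $-1$ is expressible as a mixed cup--cap composite of the two Frobenius structures, but you must then derive \emph{all} of its axioms --- together with the axioms relating the black cups and caps to the white ones through it --- from nothing but the two extraspecial Frobenius structures, the two strict bimonoid laws, and the single invertibility equation of Theorem \ref{thm:lagrel}. It is not evident this is possible, and a failure here would mean the conjecture as stated requires additional equations. (Positive characteristic changes the prime field and hence which presentation result you must invoke, a case your plan does not address.) So what you have is a credible program that correctly isolates where the difficulty lives, not a proof: the decisive derivation is precisely the open content of the conjecture.
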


To summarize the two approaches we say that $\Fin\Corel^{\circ}$ correctly captures the interaction between $1$- and $0$-junctions, while $\Lag\Rel_k^{\circ}$ correctly captures the behavior of each junction separately. 


\section{Functors for Bond Graphs}
\label{sec:functors}

Using the framework of props we can easily describe a category having only the relations that appear in both categories. 
\begin{definition}
Let $\BondGraph$ be the prop generated by the $8$ morphisms:
\[
  \xymatrixrowsep{1pt}
  \xymatrixcolsep{25pt}
  \xymatrix@1{
 
  }
\]

\noindent is idempotent.
\end{definition}
Further, we equip $\BondGraph$ with a dagger structure where if $f\maps m\to n$ is a morphism then $f^\dagger\maps n\to m$ is the vertical reflection of $f$. This is the unique way to make this prop into a symmetric monoidal dagger category such that  \[   M^\dagger = D, \quad I^\dagger = E, \quad M'^\dagger = D', \quad I'^\dagger = E'.\] 

We take the composite of $K\maps \Fin\Corel \to \Lag\Rel_k$ and the inclusion functor $i \maps \Fin\Corel^{\circ} \to \Fin\Corel$ which results in

 \[    Ki \maps \Fin\Corel^{\circ} \to \Lag\Rel_k  .\]

\noindent We may describe this functor in a simple way using props.

\begin{proposition}\label{prop:blackbox}
$K i \maps \Fin\Corel^{\circ} \to \Lag\Rel_k $ is a symmetric monoidal dagger functor that is determined by the following:

\[\arraycolsep=1pt
\begin{array}{lcl}
  Ki(m_2) &=& \{(\phi_1,\ldots\thinspace , I_6): \; \phi_1= \phi_5, I_1 = I_5,  \phi_4 = \phi_6, I_4 = I_6, \phi_2 = \phi_3, I_2 + I_3 = 0\}  \\
 & & \maps k^8\to k^4 \\ \\
  Ki(i_2) &=& \{(\phi_1, I_1, \phi_2,I_2) : \; I_1 +I_2 = 0, \phi_1=\phi_2\}   \\
 & & \maps \{0\}\to k^4 \\ \\
  Ki(\mu_2) &=& \{(\phi_1,\ldots\thinspace , I_6): \; \phi_1= \phi_3 =\phi_5,\phi_2= \phi_4 =\phi_6, I_1+I_3=I_5, I_2+I_4=I_6\}  \\
 & & \maps k^8\to k^4 \\ \\
  Ki(\iota_2) &=& \{(\phi_1, I_1,\phi_2,I_2) : \; I_1 = 0,I_2 =0\}   \\
 & & \maps \{0\}\to k^4 \\ 
\end{array}
\]

\end{proposition}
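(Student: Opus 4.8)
The plan is to use that $Ki$ is a composite of structure-preserving functors and then evaluate it on generators. First I would check that the inclusion $i \maps \Fin\Corel^{\circ} \to \Fin\Corel$ is a symmetric monoidal dagger functor: the generating set $\{m_2,i_2,\delta_2,\epsilon_2,\mu_2,\iota_2,d_2,e_2\}$ is closed under the dagger of $\Fin\Corel$, so $\Fin\Corel^{\circ}$ is a sub-dagger-category and $i$ strictly preserves the tensor product and $(-)^{\dagger}$. Since $K$ is a dagger morphism of props, and a morphism of props is strict symmetric monoidal, the composite $Ki = K\circ i$ is a symmetric monoidal dagger functor. This settles the first clause of the statement, and, since a morphism of props is determined by its values on generators (Corollary \ref{cor:signature}), it means $Ki$ is determined by its values on the eight generators together with the rules $Ki(f\circ g) = Ki(f)\circ Ki(g)$ and $Ki(f+g) = Ki(f)\oplus Ki(g)$.

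Next I would compute the four listed images by unfolding each generator into the generators $m,i,d,e$ of $\Fin\Corel$ and applying the known formulas for $K(m), K(i), K(d), K(e)$. The unit cases are immediate: from $i_2 = d\circ i$ one gets $Ki(i_2) = K(d)\circ K(i)$, and composing $\{(\phi,I):I=0\}$ with $K(d)$ while eliminating the internal variable yields $\{(\phi_1,I_1,\phi_2,I_2): \phi_1=\phi_2,\; I_1+I_2=0\}$; from $\iota_2 = i+i$ one gets $Ki(\iota_2) = K(i)\oplus K(i) = \{(\phi_1,I_1,\phi_2,I_2): I_1=0,\;I_2=0\}$ directly. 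For the two multiplication generators the work is the same in spirit but requires care with terminal labels. Writing $m_2 = \mathrm{id}_1 + (e\circ m) + \mathrm{id}_1$ produces two identity blocks, contributing $\phi_1=\phi_5,\,I_1=I_5$ and $\phi_4=\phi_6,\,I_4=I_6$, together with the middle block $K(e\circ m) = \{(\phi_2,I_2,\phi_3,I_3): \phi_2=\phi_3,\;I_2+I_3=0\}$ obtained by composing $K(m)$ with $K(e)$. Writing $\mu_2 = (m+m)\circ(\mathrm{id}_1 + \beta + \mathrm{id}_1)$, where $\beta$ is the braiding exchanging the two middle terminals, the braiding regroups the four input ports so that the two copies of $K(m)$ act on the odd- and even-indexed terminals, giving the two independent addition blocks $\phi_1=\phi_3=\phi_5,\,I_1+I_3=I_5$ and $\phi_2=\phi_4=\phi_6,\,I_2+I_4=I_6$. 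In each case the resulting subspace matches the formula in the statement.

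Finally, the remaining generators $d_2, e_2, \delta_2, \epsilon_2$ are the daggers of $m_2, i_2, \mu_2, \iota_2$; since $Ki$ preserves the dagger and the dagger in $\Lag\Rel_k$ merely exchanges the input and output spaces of a relation, their images are obtained from the four computed formulas by reversing the roles of domain and codomain. Thus the four displayed formulas determine $Ki$ completely.

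The step I expect to be the main obstacle is purely the bookkeeping in the two multiplication generators: tracking how the tensor decomposition and the braiding permute the eight coordinates $(\phi_1,I_1,\dots,\phi_4,I_4)$ of the domain, and verifying that the relational composition correctly contracts the internal variables introduced by $K(m)$, $K(d)$, and $K(e)$. Everything else reduces to applying monoidality and functoriality to the four explicit Lagrangian relations recorded for $K$.
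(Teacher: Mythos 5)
Your proposal is correct, and its computational core coincides with the paper's proof: the paper likewise observes that $Ki$ is a symmetric monoidal dagger functor because $K$ is, computes $Ki(m_2)$, $Ki(i_2)$, $Ki(\mu_2)$, $Ki(\iota_2)$ by writing each generator in terms of $m,i,d,e$ and composing the corresponding Lagrangian relations in $\Lag\Rel_k$, and then reads off $Ki(d_2)$, $Ki(e_2)$, $Ki(\delta_2)$, $Ki(\epsilon_2)$ from the dagger. Where you genuinely diverge is in justifying the word ``determined,'' which the paper treats as the only remaining substantive point. The paper argues it by checking that the eight images assemble into two monoid structures on $(k\oplus k)^2$, built just as in $\Fin\Corel$, satisfying at least the equations of Theorems \ref{thm:series}, \ref{thm:parallel}, and \ref{thm:weakbimonoid}, and concludes that ``this defines a functor using the equivalent props''---an existence-style argument in the spirit of defining a prop morphism on generators subject to relations. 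You instead invoke uniqueness: $Ki$ already exists as a composite of strict symmetric monoidal dagger functors, every morphism of $\Fin\Corel^{\circ}$ is by definition a composite of tensor products of the eight generators, so any such functor is pinned down by its generator values, and the four displayed values plus the dagger yield all eight. Your route is more elementary and arguably cleaner, since the paper's relation-checking would only be forced if one wanted to reconstruct the functor from a presentation of $\Fin\Corel^{\circ}$, and that presentation is merely conjectured (Conjecture \ref{con:presentation}); determination needs generation alone. One small correction: the fact you need is exactly this generation statement, not Corollary \ref{cor:signature}, which concerns free props and presentations by signatures and equations; citing it for uniqueness is a misattribution, though the claim you rest on is true and elementary.
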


\begin{proof}
$K i$ is a symmetric monoidal dagger functor since $K$ is. It is tedious, but not difficult, to check that $K i$ does act on the morphisms $m_2,i_2,\mu_2,\iota_2$ in the above way. We just write them in terms of $m,d,i,e$ and then compose the results in $\Lag\Rel_k$. Since $Ki$ is a dagger functor the following is a consequence:

\[\arraycolsep=1pt
\begin{array}{lcl}
  Ki(d_2) &=& \{(\phi_1,\ldots\thinspace , I_6): \; \phi_1= \phi_3, I_1 = I_3,  \phi_2 = \phi_6, I_2 = I_6, \phi_4 = \phi_4, I_4 + I_5 = 0\} \\
 & & \maps k^4\to k^8 \\ \\
  Ki(e_2) &=& \{(\phi_1, I_1, \phi_2,I_2) :\; I_1 +I_2= 0, \phi_1 = \phi_2\} \\
 & & \maps k^4\to \{0\} \\ \\
  Ki(\delta_2) &=& \{(\phi_1,\ldots\thinspace , I_6): \; \phi_1= \phi_3 =\phi_5,\phi_2= \phi_4 =\phi_5, I_1=I_3+ I_5, I_2=I_4+ I_6\} \\
 & & \maps k^4\to k^8 \\ \\
  Ki(\epsilon_2) &=& \{(\phi_1, I_1, \phi_2,I_2) :\; I_1= 0, I_2 = 0\} \\
 & & \maps k^4\to \{0\}. \\
\end{array}
\]

\noindent The only thing to show is that the above $8$ morphisms are enough to define $Ki\maps \Fin\Corel^{\circ} \to \Lag\Rel_k $. We have that $(k\oplus k,\Delta^{\dagger} \oplus +, \Delta \oplus +^{\dagger}, !^{\dagger} \oplus 0,  ! \oplus 0^{\dagger})$ is an extraspecial commutative Frobenius monoid in $\Lag\Rel_k$. Just as we built two monoids on $2$ from $1\in \Fin\Corel$, we use the same constructions for $k\oplus k$, to give us two monoid structures on $(k\oplus k)^2$. The subspaces associated to these monoids are the same subspaces which our morphisms are sent to.

In other words the two monoids are: $$((k\oplus k)^2,Ki(m_2), Ki(i_2),Ki(d_2),Ki(e_2))$$ and $$ ((k\oplus k)^2,Ki(\mu_2), Ki(\iota_2),Ki(\delta_2),Ki(\epsilon_2)).$$

In the same way we figured out the additional equations for the morphisms associated to the monoids in $\Fin\Corel$ one can show that these subspaces obey at least the same equations. There may be more equations, but this does not matter. Similarly, we have the weak bimonoid laws and the additional laws given in \ref{thm:weakbimonoid}. Since all of the equations hold, this defines a functor using the equivalent props.
\end{proof}

So far we have the following diagram of symmetric monoidal dagger functors between props:
\[
\begin{tikzcd}[column sep=scriptsize]
 \Lag\Rel_k^{\circ} \arrow[r, ""{name=U, below}, "i'"]{}
 & \Lag\Rel \\
\Fin\Corel^{\circ}  \arrow[r, ""{name=U, below}, "i"]{} & \Fin\Corel  \arrow[u, "K"'] 
\end{tikzcd}
\]
By using $\BondGraph$ we can extend this to a diagram that commutes up to a natural transformation.

\begin{proposition}\label{prop:functors}
There exist unique dagger morphisms of props $G\maps \BondGraph \to \Fin\Corel^{\circ}$ and $F\maps \BondGraph \to \Lag\Rel_k^{\circ}$ defined by:
\begin{itemize}
\item $G(M) = m_2$
\end{itemize}
\vspace{-3ex}
\[
  \xymatrixrowsep{5pt}
  \xymatrix@1{
 *+[u]{
\begin{tikzpicture}[circuit ee IEC, set resistor graphic=var resistor IEC
      graphic, scale=0.8, every node/.append style={transform shape}]
[
	node distance=1.5cm,
	mynewelement/.style={
		color=blue!50!black!75,
		thick
	},
	mybondmodpoint/.style={
	rectangle,
	minimum size=3mm,
	very thick,
	draw=red!50!black!50, 
	outer sep=2pt
	}
]		
\node(G) at +(-1,0) {\large $G\maps$};
	\node(J11) {$\mathrm{1}$};
	\node (R2) [ below left of=J11] {}
	edge  [line width=3.5pt]   node [below]{} (J11)
        edge  [line width=3.5pt]   node [above]{} (J11);
	\node (R1) [ above left of=J11] {}
	edge [line width=3.5pt]    node [below]{} (J11)
        edge  [line width=3.5pt]   node [above]{} (J11);
	\node (C1) [right of=J11] {}
	edge [line width=3.5pt]    node [right]{} (J11)
        edge [line width=3.5pt]    node [left]{} (J11);
      \end{tikzpicture} 
}
 \ar@{|->}@<.25ex>[r] & \quad \monadmult{.1\textwidth}
  }
\]
\vspace{-4.5ex}
\begin{itemize}
\item $G(I) =  i_2$
\end{itemize}
\vspace{-3ex}
\[
  \xymatrixrowsep{5pt}
  \xymatrix@1{
 *+[u]{
\begin{tikzpicture}[circuit ee IEC, set resistor graphic=var resistor IEC
      graphic, scale=0.8, every node/.append style={transform shape}]
[
	node distance=1.5cm,
	mynewelement/.style={
		color=blue!50!black!75,
		thick
	},
	mybondmodpoint/.style={
	rectangle,
	minimum size=3mm,
	very thick,
	draw=red!50!black!50, 
	outer sep=2pt
	}
]		
\node(G) at +(-1,0) {\large $G\maps$};
	\node (J11) {$\mathrm{1}$};
	\node (R1) [right of=J11] {}
	edge  [line width=3.5pt]   node [below]{} (J11)
        edge  [line width=3.5pt]   node [above]{} (J11);
	\node (D1) [ above left of=J11] {};
	\node (D2) [ below left of=J11] {};
      \end{tikzpicture} 
	}
  \ar@{|->}@<.25ex>[r] & \quad \captwo{.1\textwidth}
  }
\]
\vspace{-4.5ex}
\begin{itemize}
\item $G(M') = \mu_2$
\end{itemize}
\vspace{-3ex}
\[
  \xymatrixrowsep{5pt}
  \xymatrix@1{
 *+[u]{
\begin{tikzpicture}[circuit ee IEC, set resistor graphic=var resistor IEC
      graphic, scale=0.8, every node/.append style={transform shape}]
[
	node distance=1.5cm,
	mynewelement/.style={
		color=blue!50!black!75,
		thick
	},
	mybondmodpoint/.style={
	rectangle,
	minimum size=3mm,
	very thick,
	draw=red!50!black!50, 
	outer sep=2pt
	}
]		
\node(G) at +(-1,0) {\large $G\maps$};
	\node(J11) {$\mathrm{0}$};
	\node (R2) [ below left of=J11] {}
	edge [line width=3.5pt]    node [below]{} (J11)
        edge   [line width=3.5pt]  node [above]{} (J11);
	\node (R1) [ above left of=J11] {}
	edge [line width=3.5pt]   node [below]{} (J11)
        edge  [line width=3.5pt]   node [above]{} (J11);
	\node (C1) [right of=J11] {}
	edge [line width=3.5pt]    node [right]{} (J11)
        edge  [line width=3.5pt]   node [left]{} (J11);
      \end{tikzpicture} 
}
  \ar@{|->}@<.25ex>[r] &\quad \parmult{.1\textwidth}
  }
\]
\vspace{-4.5ex}
\begin{itemize}
\item $G(I') =   \iota_2$
\end{itemize}
\vspace{-3ex}
\[
  \xymatrixrowsep{5pt}
  \xymatrix@1{
 *+[u]{
\begin{tikzpicture}[circuit ee IEC, set resistor graphic=var resistor IEC
      graphic, scale=0.8, every node/.append style={transform shape}]
[
	node distance=1.5cm,
	mynewelement/.style={
		color=blue!50!black!75,
		thick
	},
	mybondmodpoint/.style={
	rectangle,
	minimum size=3mm,
	very thick,
	draw=red!50!black!50, 
	outer sep=2pt
	}
]		
\node(G) at +(-1,0) {\large $G\maps$};
	\node (J11) {$\mathrm{0}$};
	\node (R1) [right of=J11] {}
	edge [line width=3.5pt]    node [below]{} (J11)
        edge [line width=3.5pt]    node [above]{} (J11);
	\node (D1) [ above left of=J11] {};
	\node (D2) [ below left of=J11] {};
      \end{tikzpicture} 
	}
  \ar@{|->}@<.25ex>[r] & \quad \parunit{.1\textwidth}
  }
\]
\vspace{-4.5ex}
\begin{itemize}
\item $F(M) = + \oplus \Delta^{\dagger}$
\end{itemize}
\vspace{-3ex}
\[
  \xymatrixrowsep{5pt}
  \xymatrix@1{
 *+[u]{
\begin{tikzpicture}[circuit ee IEC, set resistor graphic=var resistor IEC
      graphic, scale=0.8, every node/.append style={transform shape}]
[
	node distance=1.5cm,
	mynewelement/.style={
		color=blue!50!black!75,
		thick
	},
	mybondmodpoint/.style={
	rectangle,
	minimum size=3mm,
	very thick,
	draw=red!50!black!50, 
	outer sep=2pt
	}
]		
\node(F) at +(-1,0) {\large $F\maps$};
	\node(J11) {$\mathrm{1}$};
	\node (R2) [ below left of=J11] {}
	edge [line width=3.5pt]    node [below]{} (J11)
        edge  [line width=3.5pt]   node [above]{} (J11);
	\node (R1) [ above left of=J11] {}
	edge [line width=3.5pt]    node [below]{} (J11)
        edge [line width=3.5pt]    node [above]{} (J11);
	\node (C1) [right of=J11] {}
	edge  [line width=3.5pt]   node [right]{} (J11)
        edge [line width=3.5pt]    node [left]{} (J11);
      \end{tikzpicture} 
}
  \ar@{|->}@<.25ex>[r] &  \quad  + \oplus \Delta^{\dagger}
  }
\]
\vspace{-4.5ex}
\begin{itemize}
\item $F(I) =  0 \; \oplus \; !^{\dagger} $
\end{itemize}
\vspace{-3ex}
\[
  \xymatrixrowsep{5pt}
  \xymatrix@1{
 *+[u]{
\begin{tikzpicture}[circuit ee IEC, set resistor graphic=var resistor IEC
      graphic, scale=0.8, every node/.append style={transform shape}]
[
	node distance=1.5cm,
	mynewelement/.style={
		color=blue!50!black!75,
		thick
	},
	mybondmodpoint/.style={
	rectangle,
	minimum size=3mm,
	very thick,
	draw=red!50!black!50, 
	outer sep=2pt
	}
]		
\node(F) at +(-1,0) {\large $F\maps$};
	\node (J11) {$\mathrm{1}$};
	\node (R1) [right of=J11] {}
	edge  [line width=3.5pt]   node [below]{} (J11)
        edge [line width=3.5pt]    node [above]{} (J11);
	\node (D1) [ above left of=J11] {};
	\node (D2) [ below left of=J11] {};
      \end{tikzpicture} 
	}
  \ar@{|->}@<.25ex>[r] & \quad 0 \; \oplus \; !^{\dagger} 
  }
\]
\vspace{-4.5ex}
\begin{itemize}
\item $F(M') = \Delta^{\dagger} \oplus + $
\end{itemize}
\vspace{-3ex}
\[
  \xymatrixrowsep{5pt}
  \xymatrix@1{
 *+[u]{
\begin{tikzpicture}[circuit ee IEC, set resistor graphic=var resistor IEC
      graphic, scale=0.8, every node/.append style={transform shape}]
[
	node distance=1.5cm,
	mynewelement/.style={
		color=blue!50!black!75,
		thick
	},
	mybondmodpoint/.style={
	rectangle,
	minimum size=3mm,
	very thick,
	draw=red!50!black!50, 
	outer sep=2pt
	}
]		
\node(F) at +(-1,0) {\large $F\maps$};
	\node(J11) {$\mathrm{0}$};
	\node (R2) [ below left of=J11] {}
	edge  [line width=3.5pt]   node [below]{} (J11)
        edge [line width=3.5pt]   node [above]{} (J11);
	\node (R1) [ above left of=J11] {}
	edge [line width=3.5pt]    node [below]{} (J11)
        edge [line width=3.5pt]    node [above]{} (J11);
	\node (C1) [right of=J11] {}
	edge [line width=3.5pt]    node [right]{} (J11)
        edge [line width=3.5pt]   node [left]{} (J11);
      \end{tikzpicture} 
}
  \ar@{|->}@<.25ex>[r] & \quad \Delta^{\dagger} \oplus +
  }
\]
\vspace{-4.5ex}
\begin{itemize}
\item $F(I') =   !^{\dagger} \oplus 0$
\end{itemize}
\vspace{-3ex}
\[
  \xymatrixrowsep{5pt}
  \xymatrix@1{
 *+[u]{
\begin{tikzpicture}[circuit ee IEC, set resistor graphic=var resistor IEC
      graphic, scale=0.8, every node/.append style={transform shape}]
[
	node distance=1.5cm,
	mynewelement/.style={
		color=blue!50!black!75,
		thick
	},
	mybondmodpoint/.style={
	rectangle,
	minimum size=3mm,
	very thick,
	draw=red!50!black!50, 
	outer sep=2pt
	}
]		
\node(F) at +(-1,0) {\large $F\maps$};
	\node (J11) {$\mathrm{0}$};
	\node (R1) [right of=J11] {}
	edge  [line width=3.5pt]   node [below]{} (J11)
        edge [line width=3.5pt]    node [above]{} (J11);
	\node (D1) [ above left of=J11] {};
	\node (D2) [ below left of=J11] {};
      \end{tikzpicture} 
	}
 \ar@{|->}@<.25ex>[r] & \quad !^{\dagger} \oplus 0
  }
\]
\end{proposition}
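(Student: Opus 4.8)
The plan is to invoke the universal property of a prop presented by generators and relations, as spelled out following Corollary \ref{cor:signature}: a dagger morphism of props out of $\BondGraph$ is uniquely determined by its values on the eight generators, and such an assignment extends to a morphism of props precisely when every defining relation of $\BondGraph$ holds among the chosen images in the target. Since the dagger structure on $\BondGraph$ is fixed by $M^{\dagger} = D$, $I^{\dagger} = E$, $M'^{\dagger} = D'$, $I'^{\dagger} = E'$, specifying a \emph{dagger} morphism amounts to specifying the images of $M, I, M', I'$ only; the images of $D, E, D', E'$ are then forced by dagger-preservation. So in each case it suffices to record the images stated in the proposition, check that they lie in the claimed subcategory, and verify the defining relations. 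Uniqueness is then immediate from the universal property.

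For $G \maps \BondGraph \to \Fin\Corel^{\circ}$ the series structure $(1, M, I, D, E)$ is sent to $(2, m_2, i_2, d_2, e_2)$ and the parallel structure $(1, M', I', D', E')$ to $(2, \mu_2, \iota_2, \delta_2, \epsilon_2)$; both lie in $\Fin\Corel^{\circ}$ by definition. First I would run down the defining relations of $\BondGraph$ and match each to an earlier result: the two extraspecial symmetric Frobenius monoid laws are exactly Theorem \ref{thm:series} and Theorem \ref{thm:parallel} (the parallel monoid being commutative, hence symmetric), while the two weak bimonoid laws, the extra law $E \circ I' = \id_0 = E' \circ I$, and the two idempotency relations are the content of Theorem \ref{thm:weakbimonoid}, under the identifications $(1,M,I,D',E') \mapsto (2, m_2, i_2, \delta_2, \epsilon_2)$ and $(1,M',I',D,E) \mapsto (2, \mu_2, \iota_2, d_2, e_2)$. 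For instance, $F(M'\circ D\circ M\circ D')$ corresponds to $\mu_2 \circ d_2 \circ m_2 \circ \delta_2 = m_2\circ\delta_2$ (using $\mu_2 \circ d_2 = m_2 \circ \delta_2$ and $(m_2\circ\delta_2)^2 = m_2\circ\delta_2$), which is idempotent. Since every defining relation holds among the images, $G$ is well-defined.

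For $F \maps \BondGraph \to \Lag\Rel_k^{\circ}$ the essential feature is the crossing recorded in the proposition: the series generators $(M,I,D,E)$ go to $(+ \oplus \Delta^{\dagger},\, 0 \oplus !^{\dagger},\, +^{\dagger} \oplus \Delta,\, 0^{\dagger} \oplus !)$ and the parallel generators $(M',I',D',E')$ to $(\Delta^{\dagger} \oplus +,\, !^{\dagger} \oplus 0,\, \Delta \oplus +^{\dagger},\, ! \oplus 0^{\dagger})$, with the daggered generators forced. These are precisely the eight generators of $\Lag\Rel_k^{\circ}$, so the images lie there. Again I would match relations against Theorem \ref{thm:lagrel}: the two extraspecial symmetric Frobenius monoid laws hold since the corresponding images are the two extraspecial commutative Frobenius monoids of that theorem (commutative implies symmetric); the two weak bimonoid laws hold because the relevant images are the two bimonoids of Theorem \ref{thm:lagrel}, and every bimonoid is a weak bimonoid; the idempotency relations follow from the converse identity $(\Delta^{\dagger} \oplus +)\circ(+^{\dagger} \oplus \Delta) = ((+ \oplus \Delta^{\dagger})\circ(\Delta \oplus +^{\dagger}))^{-1}$; and the extra law $E \circ I' = \id_0 = E' \circ I$ is a one-line composition check in $\Lag\Rel_k$ from the explicit formulas for the units and counits.

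The main obstacle is not any single hard calculation but the bookkeeping built into the design of $\BondGraph$: it was engineered to impose only relations shared by $\Fin\Corel^{\circ}$ and $\Lag\Rel_k^{\circ}$, and in particular it must omit the relation $M \circ D' = M' \circ D$, whose $\Fin\Corel^{\circ}$-analogue $m_2 \circ \delta_2 = \mu_2 \circ d_2$ holds but whose $\Lag\Rel_k^{\circ}$-analogue fails. The crux is therefore to confirm that each relation actually \emph{listed} in the definition of $\BondGraph$ is genuinely implied by (a subset of) Theorem \ref{thm:weakbimonoid} on the $G$ side and by the stronger bimonoid relations of Theorem \ref{thm:lagrel} on the $F$ side, with the implications ``bimonoid $\Rightarrow$ weak bimonoid'' and ``commutative $\Rightarrow$ symmetric'' doing exactly the work that lets the weaker $\BondGraph$ relations pass through to both targets.
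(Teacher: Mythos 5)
Your proposal is correct and follows essentially the same route as the paper's proof: uniqueness via the prop presentation together with dagger-preservation forcing the images of $D, E, D', E'$, and existence by matching each defining relation of $\BondGraph$ against Theorems \ref{thm:series}, \ref{thm:parallel}, \ref{thm:weakbimonoid} (for $G$) and Theorem \ref{thm:lagrel} (for $F$), with the idempotency laws as the only relations needing explicit computation --- which you handle exactly as the paper does, via $\mu_2 \circ d_2 = m_2 \circ \delta_2$ on the $G$ side and the inverse identity on the $F$ side. (One stray typo: you wrote $F(M'\circ D\circ M\circ D')$ where you meant $G$, but the surrounding computation makes the intent clear.)
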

\label{prop:functor}
\vspace{-4.5ex}

\begin{proof}
We first prove uniqueness of $G$. Suppose that $G$ is a dagger morphism of props such that $G(M) = m_2$,  $G(I) =  i_2$,  $G(M') = \mu_2$, and $G(I') =   \iota_2$ as pictured above. Note that since $G$ is a dagger functor,  $G$ must do the following to the other $4$ generators:

\begin{itemize}
\item $G(D) = d_2$
\end{itemize}
\vspace{-3ex}
\[
  \xymatrixrowsep{5pt}
  \xymatrix@1{
 *+[u]{
\begin{tikzpicture}[circuit ee IEC, set resistor graphic=var resistor IEC
      graphic, scale=0.8, every node/.append style={transform shape}]
[
	node distance=1.5cm,
	mynewelement/.style={
		color=blue!50!black!75,
		thick
	},
	mybondmodpoint/.style={
	rectangle,
	minimum size=3mm,
	very thick,
	draw=red!50!black!50, 
	outer sep=2pt
	}
]		
\node(G) at +(-1.5,0) {\large $G\maps$};
	\node(J11) {$\mathrm{1}$};
	\node (R2) [ below right of=J11] {}
	edge  [line width=3.5pt]   node [below]{} (J11)
        edge  [line width=3.5pt]   node [above]{} (J11);
	\node (R1) [ above right of=J11] {}
	edge [line width=3.5pt]    node [below]{} (J11)
        edge  [line width=3.5pt]   node [above]{} (J11);
	\node (C1) [left of=J11] {}
	edge [line width=3.5pt]    node [right]{} (J11)
        edge [line width=3.5pt]    node [left]{} (J11);
      \end{tikzpicture} 
}
 \ar@{|->}@<.25ex>[r] & \quad \monadcomult{.1\textwidth}
  }
\]
\vspace{-4.5ex}
\begin{itemize}
\item $G(E) =  e_2$
\end{itemize}
\vspace{-3ex}
\[
  \xymatrixrowsep{5pt}
  \xymatrix@1{
 *+[u]{
\begin{tikzpicture}[circuit ee IEC, set resistor graphic=var resistor IEC
      graphic, scale=0.8, every node/.append style={transform shape}]
[
	node distance=1.5cm,
	mynewelement/.style={
		color=blue!50!black!75,
		thick
	},
	mybondmodpoint/.style={
	rectangle,
	minimum size=3mm,
	very thick,
	draw=red!50!black!50, 
	outer sep=2pt
	}
]		
\node(G) at +(-1.5,0) {\large $G\maps$};
	\node (J11) {$\mathrm{1}$};
	\node (R1) [left of=J11] {}
	edge  [line width=3.5pt]   node [below]{} (J11)
        edge  [line width=3.5pt]   node [above]{} (J11);
	\node (D1) [ above right of=J11] {};
	\node (D2) [ below right of=J11] {};
      \end{tikzpicture} 
	}
  \ar@{|->}@<.25ex>[r] & \quad \cuptwo{.1\textwidth}
  }
\]
\vspace{-4.5ex}
\begin{itemize}
\item $G(D') = \delta_2$
\end{itemize}
\vspace{-3ex}
\[
  \xymatrixrowsep{5pt}
  \xymatrix@1{
 *+[u]{
\begin{tikzpicture}[circuit ee IEC, set resistor graphic=var resistor IEC
      graphic, scale=0.8, every node/.append style={transform shape}]
[
	node distance=1.5cm,
	mynewelement/.style={
		color=blue!50!black!75,
		thick
	},
	mybondmodpoint/.style={
	rectangle,
	minimum size=3mm,
	very thick,
	draw=red!50!black!50, 
	outer sep=2pt
	}
]		
\node(G) at +(-1.5,0) {\large $G\maps$};
	\node(J11) {$\mathrm{0}$};
	\node (R2) [ below right of=J11] {}
	edge [line width=3.5pt]    node [below]{} (J11)
        edge   [line width=3.5pt]  node [above]{} (J11);
	\node (R1) [ above right of=J11] {}
	edge [line width=3.5pt]   node [below]{} (J11)
        edge  [line width=3.5pt]   node [above]{} (J11);
	\node (C1) [left of=J11] {}
	edge [line width=3.5pt]    node [right]{} (J11)
        edge  [line width=3.5pt]   node [left]{} (J11);
      \end{tikzpicture} 
}
  \ar@{|->}@<.25ex>[r] &\quad \parcomult{.1\textwidth}
  }
\]
\vspace{-4.5ex}
\begin{itemize}
\item $G(E') = \epsilon_2$
\end{itemize}
\vspace{-3ex}
\[
  \xymatrixrowsep{5pt}
  \xymatrix@1{
 *+[u]{
\begin{tikzpicture}[circuit ee IEC, set resistor graphic=var resistor IEC
      graphic, scale=0.8, every node/.append style={transform shape}]
[
	node distance=1.5cm,
	mynewelement/.style={
		color=blue!50!black!75,
		thick
	},
	mybondmodpoint/.style={
	rectangle,
	minimum size=3mm,
	very thick,
	draw=red!50!black!50, 
	outer sep=2pt
	}
]		
\node(G) at +(-1.5,0) {\large $G\maps$};
	\node (J11) {$\mathrm{0}$};
	\node (R1) [left of=J11] {}
	edge [line width=3.5pt]    node [below]{} (J11)
        edge [line width=3.5pt]    node [above]{} (J11);
	\node (D1) [ above right of=J11] {};
	\node (D2) [ below right of=J11] {};
      \end{tikzpicture} 
	}
 \ar@{|->}@<.25ex>[r] & \quad \parcounit{.1\textwidth}
  }
\]
\vspace{-3ex}

We know what $G$ does to all $8$ generators, so $G$ is the unique morphism of props acting in this way on those generators.  Similarly $F$ is the unique morphism of props such that $F(M) = + \oplus \Delta^{\dagger}$, $F(I) =   0 \oplus !^{\dagger}$, $F(M') = \Delta^{\dagger} \oplus +$, and $F(I') =  !^{\dagger} \oplus 0$. On the other generators, $F$ acts as follows:

\begin{itemize}
\item  $F(D) = \Delta \oplus +^{\dagger}$
\end{itemize}
\vspace{-3ex}
\[
  \xymatrixrowsep{5pt}
  \xymatrix@1{
 *+[u]{
\begin{tikzpicture}[circuit ee IEC, set resistor graphic=var resistor IEC
      graphic, scale=0.8, every node/.append style={transform shape}]
[
	node distance=1.5cm,
	mynewelement/.style={
		color=blue!50!black!75,
		thick
	},
	mybondmodpoint/.style={
	rectangle,
	minimum size=3mm,
	very thick,
	draw=red!50!black!50, 
	outer sep=2pt
	}
]		
\node(F) at +(-1.5,0) {\large $F\maps$};
	\node(J11) {$\mathrm{1}$};
	\node (R2) [ below right of=J11] {}
	edge [line width=3.5pt]    node [below]{} (J11)
        edge  [line width=3.5pt]   node [above]{} (J11);
	\node (R1) [ above right of=J11] {}
	edge [line width=3.5pt]    node [below]{} (J11)
        edge [line width=3.5pt]    node [above]{} (J11);
	\node (C1) [left of=J11] {}
	edge  [line width=3.5pt]   node [right]{} (J11)
        edge [line width=3.5pt]    node [left]{} (J11);
      \end{tikzpicture} 
}
  \ar@{|->}@<.25ex>[r] &  \quad  \Delta \oplus +^{\dagger}
  }
\]
\vspace{-4.5ex}
\begin{itemize}
\item $F(E) =   ! \oplus 0^{\dagger}$
\end{itemize}
\vspace{-3ex}
\[
  \xymatrixrowsep{5pt}
  \xymatrix@1{
 *+[u]{
\begin{tikzpicture}[circuit ee IEC, set resistor graphic=var resistor IEC
      graphic, scale=0.8, every node/.append style={transform shape}]
[
	node distance=1.5cm,
	mynewelement/.style={
		color=blue!50!black!75,
		thick
	},
	mybondmodpoint/.style={
	rectangle,
	minimum size=3mm,
	very thick,
	draw=red!50!black!50, 
	outer sep=2pt
	}
]		
\node(F) at +(-1.5,0) {\large $F\maps$};
	\node (J11) {$\mathrm{1}$};
	\node (R1) [left of=J11] {}
	edge  [line width=3.5pt]   node [below]{} (J11)
        edge [line width=3.5pt]    node [above]{} (J11);
	\node (D1) [ above right of=J11] {};
	\node (D2) [ below right of=J11] {};
      \end{tikzpicture} 
	}
  \ar@{|->}@<.25ex>[r] &  \quad ! \oplus 0^{\dagger}
  }
\]
\vspace{-4.5ex}
\begin{itemize}
\item $F(D') = +^{\dagger} \oplus \Delta$
\end{itemize}
\vspace{-3ex}
\[
  \xymatrixrowsep{5pt}
  \xymatrix@1{
 *+[u]{
\begin{tikzpicture}[circuit ee IEC, set resistor graphic=var resistor IEC
      graphic, scale=0.8, every node/.append style={transform shape}]
[
	node distance=1.5cm,
	mynewelement/.style={
		color=blue!50!black!75,
		thick
	},
	mybondmodpoint/.style={
	rectangle,
	minimum size=3mm,
	very thick,
	draw=red!50!black!50, 
	outer sep=2pt
	}
]		
\node(F) at +(-1.5,0) {\large $F\maps$};
	\node(J11) {$\mathrm{0}$};
	\node (R2) [ below right of=J11] {}
	edge  [line width=3.5pt]   node [below]{} (J11)
        edge [line width=3.5pt]   node [above]{} (J11);
	\node (R1) [ above right of=J11] {}
	edge [line width=3.5pt]    node [below]{} (J11)
        edge [line width=3.5pt]    node [above]{} (J11);
	\node (C1) [left of=J11] {}
	edge [line width=3.5pt]    node [right]{} (J11)
        edge [line width=3.5pt]   node [left]{} (J11);
      \end{tikzpicture} 
}
 \ar@{|->}@<.25ex>[r] &  \quad +^{\dagger} \oplus \Delta
  }
\]
\vspace{-4.5ex}
\begin{itemize}
\item $F(E') =  0^{\dagger} \oplus !$
\end{itemize}
\vspace{-3ex}
\[
  \xymatrixrowsep{5pt}
  \xymatrix@1{
 *+[u]{
\begin{tikzpicture}[circuit ee IEC, set resistor graphic=var resistor IEC
      graphic, scale=0.8, every node/.append style={transform shape}]
[
	node distance=1.5cm,
	mynewelement/.style={
		color=blue!50!black!75,
		thick
	},
	mybondmodpoint/.style={
	rectangle,
	minimum size=3mm,
	very thick,
	draw=red!50!black!50, 
	outer sep=2pt
	}
]		
\node(F) at +(-1.5,0) {\large $F\maps$};
	\node (J11) {$\mathrm{0}$};
	\node (R1) [left of=J11] {}
	edge  [line width=3.5pt]   node [below]{} (J11)
        edge [line width=3.5pt]    node [above]{} (J11);
	\node (D1) [ above right of=J11] {};
	\node (D2) [ below right of=J11] {};
      \end{tikzpicture} 
	}
  \ar@{|->}@<.25ex>[r] & \quad 0^{\dagger} \; \oplus \; !
  }
\]
\vspace{-3ex}

Next we show that these dagger morphisms of props exist. Let $G$ be defined on the generators of $\BondGraph$ as described above. We must show that the necessary relations hold in $\Fin\Corel^{\circ}$. Most of the required relations have been shown or follow immediately from what has been shown. The only ones to check are the idempotency laws:
\begin{itemize}
\item $(G(M') \circ G(D)\circ G(M)\circ G(D'))^2 = G(M') \circ G(D)\circ G(M)\circ G(D')$
\item $(G(M) \circ G(D')\circ G(M')\circ G(D))^2 = G(M) \circ G(D')\circ G(M')\circ G(D)$ 
\end{itemize}

\noindent We have that :
\begin{align*}
   (G(M') \circ G(D)\circ G(M)\circ G(D'))^2 &= (\mu_2 \circ d_2 \circ m_2\circ \delta_2)\circ (\mu_2 \circ d_2 \circ m_2\circ \delta_2) \\
    &= (\mu_2 \circ d_2 \circ \mu_2 \circ d_2)\circ ( m_2\circ \delta_2 \circ m_2\circ \delta_2)\\
    &= (\mu_2 \circ d_2 \circ m_2\circ \delta_2) \\
    &= G(M') \circ G(D)\circ G(M)\circ G(D')
\end{align*}

\noindent One can similarly show $(G(M) \circ G(D')\circ G(M')\circ G(D))^2 = G(M) \circ G(D')\circ G(M')\circ G(D)$. Thus such a morphism of props exists. Next we show that $F$ exists. Once again we define it on the generators as above and check the necessary relations. The only ones left to check are again the idempotency laws:

\begin{itemize}
\item $(F(M') \circ F(D)\circ F(M)\circ F(D'))^2 = F(M') \circ F(D)\circ F(M)\circ F(D')$
\item $(F(M) \circ F(D')\circ F(M')\circ F(D))^2 = F(M) \circ F(D')\circ F(M')\circ F(D)$ 
\end{itemize}

\noindent We have that:
\begin{align*}
  F(M') \circ F(D)\circ F(M)\circ F(D') &=( + \oplus \Delta^{\dagger})\circ ( \Delta \oplus +^{\dagger}) \circ (\Delta^{\dagger} \oplus +) \circ ( +^{\dagger} \oplus \Delta) \\
    &= \mathrm{id}_2
\end{align*}

\vspace{-2ex}

\noindent Since $\mathrm{id}_2\circ \mathrm{id}_2 = \mathrm{id}_2$, we get:
\begin{align*}
  (F(M') \circ F(D)\circ F(M)\circ F(D'))^2 &=F(M') \circ F(D)\circ F(M)\circ F(D'). 
\end{align*}

\noindent Similarly one can show that:
\begin{align*}
  (F(M) \circ F(D')\circ F(M')\circ F(D))^2 &= F(M) \circ F(D')\circ F(M')\circ F(D). 
\end{align*} 
Thus $F$ is a morphism of props. Note that if $f$ is a morphism in $\BondGraph$ then $f^{\dagger}$ is the vertical reflection of $f$. The dagger structure associated to both $\Fin\Corel^{\circ}$ and $\Lag\Rel_k^{\circ}$ are also vertical reflections. Thus both functors preserve the dagger structure of $\BondGraph$ and thus they are both dagger morphisms of props.
\end{proof}

We have now assembled the following diagram where the arrows are symmetric monoidal dagger functors:

\[
\begin{tikzcd}[column sep=scriptsize]
& \Lag\Rel_k^{\circ} \arrow[r, ""{name=U, below}, "i'"]{}
 & \Lag\Rel_k \\
\BondGraph  \arrow[ur, ""{name=U, below}, "F"]{}  \arrow[dr, ""{name=U, above}, "G" ' ]{} \\
&\Fin\Corel^{\circ}  \arrow[r, ""{name=U, below}, "i"]{} & \Fin\Corel  \arrow[uu, "K"'] 
\end{tikzcd}
\]
This does not commute. However, we shall prove that it commutes up to a natural transformation. The idea is that $\Fin\Corel^{\circ}$ is the category that allows us to understand bond graphs in terms of potential and current, while $\Lag\Rel_k^{\circ}$ is the category that allows us to understand bond graphs in terms of effort and flow. Then the Lagrangian relation relating potential and current to effort and flow:

$$\{(V,I,\phi_1,I_1,\phi_2,I_2) | V = \phi_2-\phi_1, I = I_1 =- I_2\}$$ 

\noindent defines a natural transformation between the functors. Recall that this comes from voltage, the difference in potential, being a type of effort. Meanwhile, flow at the end of a bond is defined when current on the two terminals are equal and opposite, i.e.\ we have a port.

\begin{theorem}\label{thm:natural}
There is a natural transformation $\alpha \maps  i'F \Rightarrow KiG$ such that  $$\alpha_1 = \{(V,I,\phi_1,I_1,\phi_2,I_2) | V = \phi_2-\phi_1, I = I_1 =- I_2\}$$ and   $\alpha_n = \bigoplus_1^n \alpha_1$ for any $n\in \BondGraph$.
\end{theorem}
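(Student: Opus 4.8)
The plan is to use that $\BondGraph$ is a prop generated by the eight morphisms $M,I,D,E,M',I',D',E'$. I would set $\alpha_n=\bigoplus_1^n\alpha_1\maps k^{2n}\to k^{4n}$ with $\alpha_0=\id_{\{0\}}$, so that $\alpha$ is monoidal by construction. Because $i'F$ and $KiG$ are both symmetric monoidal and $\alpha_{m+n}=\alpha_m\oplus\alpha_n$, the class of morphisms $f\maps a\to b$ satisfying the naturality equation $KiG(f)\circ\alpha_a=\alpha_b\circ i'F(f)$ is closed under composition and under $\oplus$, contains the identities, and contains the braidings (this last case being just naturality of the braiding in $\Lag\Rel_k$ applied to the relevant copies of $\alpha_1$). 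Hence it suffices to (i) check that each $\alpha_n$ is a genuine morphism of $\Lag\Rel_k$, and (ii) verify the naturality square on each of the eight generators.

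For (i) I would verify that $\alpha_1$ is a Lagrangian subspace of $\overline{k^2}\oplus k^4$. It is cut out by the three independent equations $V=\phi_2-\phi_1$, $I=I_1$, $I=-I_2$, so $\dim\alpha_1=3=\tfrac12\dim(k^6)$; isotropy then follows by substituting into the symplectic form, where the conjugation on the domain $\overline{k^2}$ is exactly what absorbs the sign relating the effort--flow power $VI$ to the terminal power $\phi_1I_1+\phi_2I_2$. Being half-dimensional and isotropic, $\alpha_1$ is Lagrangian, and since a direct sum of Lagrangian relations is Lagrangian, every $\alpha_n$ is a morphism of $\Lag\Rel_k$.

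For (ii), the heart of the proof, each generator yields a concrete identity of composites of linear relations. For a generator $f$ I would write $i'F(f)$ as the relation assigned in Proposition~\ref{prop:functors} (for instance $F(M)=+\oplus\Delta^{\dagger}$) and $KiG(f)$ as the explicit Lagrangian relation of Proposition~\ref{prop:blackbox} (for instance $Ki(m_2)$), then compose with the appropriate copies of $\alpha_1$ on each side and compare. Thus for $f=M\maps 2\to 1$ one checks that $Ki(m_2)\circ(\alpha_1\oplus\alpha_1)$ and $\alpha_1\circ(+\oplus\Delta^{\dagger})$ agree as relations $k^4\to k^4$: composing relations means intersecting on the shared $k^4$ and projecting it away, after which both sides become the same system relating the incoming efforts and flows to the outgoing potential--current pairs through $V=\phi_2-\phi_1$ and $I=I_1=-I_2$. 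The remaining generators $I,M',I'$ and their daggers $D,E,D',E'$ are treated identically. The main obstacle is bookkeeping rather than concept: the computation lives or dies on keeping consistent conventions for (effort, flow) versus (potential, current), for the domain conjugation, and for the $+$ versus $\Delta^{\dagger}$ roles in the series and parallel generators. I expect the four parallel ($0$-junction) generators $M',I',D',E'$ to be the likeliest source of sign errors, since there the effort equalities and flow additions exchange roles relative to the series case; tabulating all intermediate variables before projecting, and cross-checking each composite against the isotropy computation used for $\alpha_1$, is what makes the eight comparisons go through cleanly.
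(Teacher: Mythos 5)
Your reduction to the generators is sound, and it is a genuinely different (and cleaner) route than the paper's. The paper never isolates the closure property you use; instead it exploits the left inverse $\alpha_n^\dagger$ (with $\alpha_n^\dagger\alpha_n=\id_{k^{2n}}$), proves by structural induction an absorption identity $KiG(f)\,\alpha_m=\alpha_n\alpha_n^\dagger\,KiG(f)\,\alpha_m$ for every morphism $f$, uses that to show $f\mapsto\alpha_n^\dagger\,KiG(f)\,\alpha_m$ is a well-defined monoidal functor, and finally checks that this conjugated functor agrees with $i'F$ on generators, braidings, and identities. Both arguments bottom out in the same eight relation-composition computations (the paper carries out $M$ explicitly, which is exactly the computation you sketch: both composites become the relation $\phi_2-\phi_1=E_1+E_2$, $F_1=F_2=I_1=-I_2$ up to renaming), but your route avoids the dagger/absorption machinery entirely: the class of morphisms satisfying the naturality square is closed under $\circ$ and $\oplus$ because $i'F$ and $KiG$ are strict symmetric monoidal and $\alpha_{m+n}=\alpha_m\oplus\alpha_n$, and the braiding case is naturality of the symmetry in $\Lag\Rel_k$. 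That is a real simplification, and your observation that the $D,E,D',E'$ cases must be checked separately (naturality for $f$ does not formally imply it for $f^\dagger$, since $\alpha$ is not unitary) is correct.

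However, your step (i) fails as written, and in an instructive way. Under the paper's conventions ($\omega((u,v),(u',v'))=u'v-uv'$, conjugate $\overline{\omega}=-\omega$, and a relation $k^2\to k^4$ being a Lagrangian subspace of $\overline{k^2}\oplus k^4$), take $x,x'\in\alpha_1$ and compute $\Omega(x,x')=-(V'I-VI')+(\phi_1'I_1-\phi_1I_1')+(\phi_2'I_2-\phi_2I_2')$. Substituting $I_1=I$, $I_2=-I$, $V=\phi_2-\phi_1$ (and likewise for the primed variables), the last two terms sum to $-(V'I-VI')$, so $\Omega(x,x')=-2(V'I-VI')$, which is not identically zero: take $V=1,\,I=0$ and $V'=0,\,I'=1$. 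The reason is that on $\alpha_1$ the terminal power is the \emph{negative} of the bond power, $\phi_1I_1+\phi_2I_2=-VI$, so the codomain form restricts to minus the domain form; the conjugation then doubles the discrepancy rather than absorbing it (it would absorb it exactly if the two powers agreed). Consequently $\alpha_1$ as stated in the theorem is isotropic for the \emph{unconjugated} form on $k^2\oplus k^4$ but is not Lagrangian in $\overline{k^2}\oplus k^4$, i.e.\ it is not literally a morphism of $\Lag\Rel_k$. This is a sign inconsistency in the paper itself, which never checks well-definedness: its proof manipulates $\alpha_1$ purely as a set-theoretic relation. Your instinct to verify Lagrangian-ness is exactly right, but carrying the check out honestly forces a sign fix --- e.g.\ replace $I=I_1=-I_2$ by $I=-I_1=I_2$ (or $V=\phi_2-\phi_1$ by $V=\phi_1-\phi_2$) --- after which your dimension-plus-isotropy argument works, and the eight generator checks, which involve only relation composition and are insensitive to the symplectic form, go through with the corresponding consistent sign changes.
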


\begin{proof}
For $1 \in \BondGraph$ let $\alpha_1 \maps k^{2}\to k^4$ be defined by: $$\alpha_1 = \{(V,I,\phi_1,I_1,\phi_2,I_2) | V = \phi_2-\phi_1, I = I_1 =- I_2\}.$$ 
Then we define $\alpha_n = \bigoplus_1^n \alpha_1$. Note since $\Lag\Rel_k$ is a dagger category we have that $$\alpha_1^\dagger = \{(\phi_1,I_1,\phi_2,I_2, V,I,) | V = \phi_2-\phi_1, I = I_1 = -I_2\}$$ 
and that  $\alpha_n^\dagger = \bigoplus_1^n \alpha_1^\dagger$. It can be shown that $\alpha_n^\dagger \circ \alpha_n = \mathrm{id}_{k^{2n}}$ so $\alpha_n^\dagger$ is a left inverse of $\alpha_n$. We want to show the following square commutes for any $f\maps m\to n$ in $\BondGraph$:

\[ 
 \xymatrix{
k^{2m} \ar[d]_{\alpha_m} \ar[r]^{i'F (f)} & k^{2n} \ar[d]^{\alpha_n} \\
 k^{4n} \ar[r]_{KiG(f) } & k^{4m}
} 
\] 
\noindent
Thus we need to show that $$i'F (f) \circ \alpha_m = \alpha_n \circ KiG (f)$$ which is equivalent to showing  $$\alpha_n^\dagger \circ i'F (f) \circ \alpha_m = KiG (f).$$ We proceed by showing that both sides determine monoidal functors that are equal on identities, generators, and the braiding. Since all of the morphisms in $\BondGraph$ are built up by starting with generators and repeatedly composing, tensoring, and braiding, this proves that the two functors are equal. 

We first show that the left side, $\alpha_n^\dagger \circ KiG (f) \circ \alpha_m$, defines a functor $$\alpha^\dagger \circ KiG \circ \alpha \maps \BondGraph \to \Lag\Rel_k$$ with $(\alpha^\dagger \circ KiG  \circ \alpha)m=k^{2m}$ and $$(\alpha^\dagger \circ KiG \circ \alpha)f =\alpha_n^\dagger \circ KiG (f) \circ \alpha_m$$ for a morphism $f\maps m \to n$. It is clear that identities are preserved so to prove it is a functor we need only show that $$\alpha_n^\dagger \circ KiG (fg) \circ \alpha_m= \alpha_k^\dagger \circ KiG (f) \circ \alpha_n\circ \alpha_n^\dagger \circ KiG (g) \circ \alpha_m$$ for any $f\maps n\to k$ and  $g\maps m\to n$.
We prove this inductively. First, we need that $$KiG(f) \alpha_m = \alpha_n\alpha_n^\dagger KiG(f)\alpha_m$$ for any generator $f\maps m\to n$ and for the braiding $\swap{1em}\maps 2\to 2$. These   calculations are left until the end of the proof.  Next we show that if 
\begin{align*}
   KiG(f) \alpha_a & = \alpha_b\alpha_b^\dagger KiG(f)\alpha_a\\	
   KiG(g) \alpha_c & = \alpha_d\alpha_d^\dagger KiG(g)\alpha_c\\	
\end{align*} 

\vspace{-4ex}

\noindent for two morphisms, $f$ and $g$, then $$KiG (f\oplus g)\alpha_m = \alpha_n \alpha_n^\dagger KiG(f\oplus  g)\alpha_m.$$ Let $f\maps a \to b$ and $g \maps c \to d$, where $a+c=m$ and $b+d=n$. Then we get
\begin{align*}
   KiG(f\oplus g)\alpha_m &= KiG(f\oplus g)\circ (\alpha_a \oplus \alpha_c) \\
    & = (KiG(f)\oplus\ KiG(g))\circ(\alpha_a\oplus \alpha_c )\\
    & = KiG(f)\alpha_a \oplus KiG(g)\alpha_c \\
    &= (\alpha_b\alpha_b^\dagger KiG(f)\alpha_a)\oplus 		
	(\alpha_d\alpha_d^\dagger KiG(g)\alpha_c)  \\
    &=(\alpha_b\alpha_d)(\alpha_b^\dagger\alpha_d^\dagger)
	(KiG(f)\oplus KiG(g))\alpha_a\alpha_c\\
    &=\alpha_n\alpha_n^\dagger(KiG(f)\oplus KiG(g))\alpha_m\\
  &=\alpha_n\alpha_n^\dagger KiG (f\oplus g)\alpha_m.\\
\end{align*}

\vspace{-4ex}

\noindent Finally, we show that if 
\begin{align*}
   KiG(f) \alpha_k & = \alpha_n\alpha_n^\dagger KiG(f)\alpha_k\\	
   KiG(g) \alpha_m & = \alpha_k\alpha_k^\dagger KiG(g)\alpha_m\\	
\end{align*} 

\vspace{-4ex}

\noindent for two morphisms, $f$ and $g$, then $$KiG(fg)\alpha_m=\alpha_n\alpha_n^\dagger KiG(fg)\alpha_m.$$ Let $f\maps k\to n$ and $ g\maps m\to k$.
\begin{align*}
   KiG(fg)\alpha_m & = KiG(f)KiG(g)\alpha_m \\		             
    &= KiG(f)\alpha_k\alpha_k^\dagger KiG(g)\alpha_m \\
    &= \alpha_n\alpha_n^\dagger KiG(f)\alpha_k\alpha_k^\dagger KiG (g)\alpha_m \\
    &= \alpha_n\alpha_n^\dagger KiG(f) KiG(g)\alpha_m \\
    &= \alpha_n\alpha_n^\dagger KiG(fg)\alpha_m. \\
\end{align*} 

\vspace{-4ex}

\noindent Since any morphism in $\BondGraph$ can be built from generators and braiding morphisms by repeatedly tensoring and composing, this shows that $$KiG(f)\alpha_m=\alpha_n\alpha_n^\dagger KiG (f)\alpha_m$$ for any morphism $f\maps m\to n$. Then finally we can say that 
\begin{align*}
    \alpha_n^\dagger \circ KiG (fg) \circ \alpha_m &= \alpha_n^\dagger \circ KiG (f) KiG(g) \circ \alpha_m \\
    &=\alpha_n^\dagger \circ KiG (f) \alpha_k\alpha_k^\dagger KiG (g) \circ \alpha_m 
\end{align*}

\vspace{-1.5ex}

\noindent so that we have a functor. Next we show that both functors are monoidal. It is clear that $i'F$ is a monoidal functor since it is the composite of two monoidal functors. We show that $\alpha^\dagger \circ KiG \circ \alpha$ is monoidal.

For an object $m$ in $\BondGraph$ we have $(\alpha^\dagger \circ KiG \circ \alpha)(m) = k^{2m} $ so that $$(\alpha^\dagger \circ KiG \circ \alpha)(m) \oplus (\alpha^\dagger \circ KiG \circ \alpha)(n)  = k^{2m} \oplus k^{2n}$$
and 
$$k^{2m} \oplus k^{2n} \cong k^{2m+2n} = (\alpha^\dagger \circ KiG \circ \alpha)(m+n). $$ Thus for a triple of objects $m,n,l$ the square for associativity commutes. The unit object in $\Lag\Rel_k$ is $k^0$ and is equal to $(\alpha^\dagger \circ KiG \circ \alpha)(0)$ so that unique morphism $id_{k^0}\maps k^0\to k^0$  makes the unitality squares also commute.

Since both functors are monoidal and because of the way that morphisms in $\BondGraph$ are built, the two functors are equal if they are equal on the generators, the braiding, and the identities. We give the proof for the morphism $M$.
\begin{align*}
\alpha_1^{\dagger} \circ KiG (M) &= \{(\phi_5,I_5,\phi_6,I_6,V,I) | V = \phi_6-\phi_5, I = I_5 =- I_6\}\\[-3em]
& \qquad \circ \{(\phi_1,\ldots, I_6): 
	\begin{aligned}
                            \\ 
                             \\
         \phi_1= \phi_5 \phantom{hh}\\ 
          I_1 = I_5 \phantom{hh} \\
          \phi_4 = \phi_6 \phantom{hh}
        \end{aligned}
	\begin{aligned} \\
                              \\
             I_4 = I_6 \phantom{hhhh}\\
          \phi_2 = \phi_3 \phantom{hlhh} \\\hspace{2ex}
         \phantom{hh} I_2 + I_3 = 0\}. \\
	\end{aligned}
\end{align*}

 \noindent From relation composition this results in:
\begin{align*}
\alpha_1^{\dagger} \circ KiG(M) &= \{(\phi_1,\ldots, I_4,V,I) | V = \phi_4-\phi_1, I = I_1 =- I_4 \\[-3em]
&\qquad	 \qquad \qquad  \qquad \enspace \thinspace\thinspace\thinspace \begin{aligned} \\
                            \\ 
          I_1 = I_5,   \\
        \end{aligned}
	\begin{aligned} \\
                              \\
  \phi_2 = \phi_3,  \\
	\end{aligned}
	\begin{aligned} \\
\\
     I_2 + I_3 = 0\}. \\
	\end{aligned}
\end{align*}

\noindent Then we compose with
\begin{align*}
\alpha_2 = \{(V,'I',V'',I'',\phi_1,\ldots ,I_4) | V' = \phi_2-\phi_1, I' = I_1 =- I_2 \phantom{hhhhh} \\[-3em]
\quad \begin{aligned} \\
                             \\
         \phantom{hhh} V'' = \phi_4-\phi_3,\\ 
        \end{aligned}
	\begin{aligned} \\
                              \\
         I''= I_3 = -I_4 \} \phantom{hhh} \\
	\end{aligned}
\end{align*}

\noindent which gives us  $$  \alpha_1^{\dagger} \circ KiG(M) \circ \alpha_2 = \{(V',I',V'', I'',V,I): \; V'+V''=V,  I = I' = I''\}.$$

\noindent This is the same as the Lagragian subspace $$ i'F(M) = \{(E_1,F_1,E_2,F_2,E_3 , F_3): \; E_1+E_2=E_3,  F_1 = F_2 = F_3\}.$$

Thus the two functors are equal on all morphisms in $\BondGraph$, which shows that  $$\alpha_n^\dagger \circ i'F (f) \circ \alpha_m = KiG (f)$$  for any morphism $f\in \BondGraph$. Thus $\alpha$ is a natural transformation, as desired.

Finally, we check that $KiG(f) \alpha_m = \alpha_n \alpha^\dagger_n KiG(f) \alpha_m$ when $f$ is a generator or the braiding $\swap{1em}$. Consider again the generator $M\maps 2\to 1$. We need that $$KiG(M) \alpha_2 = \alpha_1\alpha_1^\dagger KiG(M)\alpha_2.$$ \vspace{1ex} We have $KiG(M) \alpha_2  = \{(V,I,V',I',\phi_5,I_5, \phi_6,I_6)\}$ such that 
\begin{align*}
   V+V'=\phi_6-\phi_5 &&& I=I'=I_5=-I_6. \\		
\end{align*} 

\vspace{-4.5ex}

\noindent By composing with  $ \alpha_1^\dagger $ we get $ \{(V,I,V',I', V'',I'')\}$ such that 
\begin{align*}
   V+V'=\phi_6-\phi_5 &&& I=I'=I_5=-I_6 \\	
     V''=\phi_6-\phi_5  &&&      I''=I_5=-I_6.  &&&  \\
\end{align*} 

\vspace{-4.5ex}

\noindent These equations simplify to become
\begin{align*}
   V+V'=V'' &&&     I=I'=I''. &&& \\	
\end{align*}

\vspace{-4.5ex}

\noindent Now if we compose with $\alpha_1$ we get $ \{(V,I,V',I',\phi_7,I_7, \phi_8,I_8)\}$ such that
\begin{align*}
   V+V'=V'' &&&     I=I'=I'' &&& \\	
    V''=\phi_8-\phi_7  &&& I''=I_7=-I_8    \\
\end{align*} 

\vspace{-4.5ex}

\noindent which can be reduced to
\begin{align*}
   V+V'=\phi_8-\phi_7 &&&     I=I_7=-I_8.\\	
\end{align*} 

\vspace{-4.5ex}

This  is $KiG(M) \alpha_2 $ with a change of names. We can see the general pattern with just this generator. Composing with $\alpha_n\alpha_n^\dagger$ simply renames the potentials and currents without changing any of the relations. For potentials, the relations  defining the subspaces deal only with the differences in potentials. Thus by writing the difference in potential as voltage, we get relations that define the subspace in terms of voltage. However, this does nothing because voltage is the difference in potential. So when we convert back to potential, the same relations hold. A similar process ensures that the current can be renamed through the same process without changing the subspace. With similar reasoning we conclude that the equation holds for the other generators as well as the braiding.
\end{proof}

\bibliographystyle{plain}


\end{document}